\documentclass[10pt]{amsart}

\usepackage{amssymb,amscd,amsthm, verbatim,amsmath,color,fancyhdr, mathrsfs}
\usepackage{graphicx}
\usepackage{turnstile}

\usepackage[letterpaper, left=2.5cm, right=2.5cm, top=2.5cm,
bottom=2.5cm,dvips]{geometry}

\newtheorem{thm}{Theorem}[section]

\newtheorem{theorem}{Theorem}[section]

\newtheorem{lem}[thm]{Lemma}
\newtheorem{rem}[thm]{Remark}

\newcommand{\rb}{\mathbb{R}}

\newcommand{\pb}{\mathbb{P}}

\def\beqq{\begin{eqnarray*}}
\def\eeqq{\end{eqnarray*}}
\def\be{\begin{enumerate}}
\def\ee{\end{enumerate}}
\def\pn{\hfil\par\noindent}

\let\wh=\widehat

\def\finpr{\hfill \hbox{
\vrule height 1.453ex  width 0.093ex  depth 0ex
\vrule height 1.5ex  width 1.3ex  depth -1.407ex\kern-0.1ex
\vrule height 1.453ex  width 0.093ex  depth 0ex\kern-1.35ex
\vrule height 0.093ex  width 1.3ex  depth 0ex}}
\def\X{\mathcal{X}}

\title[Blow-Up of Modified Navier--Stokes Equations
in Nonhomogeneous Fourier Spaces]{Blow-Up of Modified Navier--Stokes Equations
in Nonhomogeneous Fourier Spaces}
\author{Jamel Benameur}
  \address{Department of Mathematics, College of Science, King Saud University (KSU) Riyadh 11451, Saudi Arabia}
   \email{jbenameur@ksu.edu.sa}
 \author{Lotfi Jlali}
   \address{Department of Mathematics and Statistics, College of Science, Imam Mohammad Ibn Saud Islamic University (IMSIU) Riyadh 11432, Saudi Arabia}
   \email{Lmjlali@imamu.edu.sa}

\subjclass[MSC 2020]{Primary 35Q30, 76D05, 76N10}

\keywords{Navier--Stokes equations, nonlinear damping, Fourier--Gevrey spaces,
blow-up criteria, lower bounds}
\date{\today}

\begin{document}
\maketitle
\begin{abstract}
We study the three-dimensional incompressible Navier--Stokes equations with a componentwise
nonlinear damping term of the form $\alpha\sum_{k=1}^3 u_k^{2m+1}e_k$ in Fourier and Fourier--
Gevrey spaces based on $\mathcal{X}^0(\mathbb{R}^3)$. We first establish local-in-time
existence and uniqueness of solutions and obtain the corresponding unique maximal solutions. We
then derive blow-up criteria in the Fourier--Gevrey framework, including an integral blow-up
criterion in a weaker exponential weight. Moreover, we establish a quantitative lower bound for
the solution near a possible finite maximal existence time. By iterating the loss of
exponential weight, we finally obtain a corresponding lower bound in the unweighted Fourier
space $\mathcal{X}^0(\mathbb{R}^3)$.
\end{abstract}
\section{Introduction}

The three-dimensional incompressible Navier--Stokes equations constitute one of the fundamental
models in fluid dynamics. Their mathematical theory has been extensively developed in various
functional frameworks; see, among others, the classical work of Fujita and Kato \cite{FK}.
Various modifications of these equations involving additional dissipative mechanisms have been
introduced and studied in order to investigate their influence on the existence, regularity,
and long-time behavior of solutions \cite{Temam,LLS2021,XZ2024}.
A commonly considered nonlinear damping term is of the form
$$
\alpha |u|^{2m}u,
$$
where $\alpha>0$ and $m\geq1$.

In the present paper, instead of the isotropic damping term $\alpha |u|^{2m}u$, we consider a
componentwise nonlinear damping term of the form
$$
\alpha\sum_{k=1}^3 |u_k|^{2m}u_k e_k = \alpha\sum_{k=1}^3 u_k^{2m+1}e_k,
$$
where $(e_1,e_2,e_3)$ denotes the canonical basis of $\mathbb{R}^3$. Throughout the paper, we
normalize the viscosity coefficient by taking $\nu=1$. More precisely, we study the following
modified Navier--Stokes system:
$$
(NSE_M)
\left\{
\begin{array}{ll}
\partial_t u-\Delta u+u\cdot\nabla u +\alpha\displaystyle\sum_{k=1}^3u_k^{2m+1}e_k=-\nabla p,
&\text{in }\mathbb{R}^+\times\mathbb{R}^3,\\[1mm]
\operatorname{div}u=0,
&\text{in }\mathbb{R}^+\times\mathbb{R}^3,\\[1mm]
u(0,x)=u^0(x),
&\text{in }\mathbb{R}^3.
\end{array}
\right.
$$
Here, $u=(u_1,u_2,u_3)$ denotes the velocity field and $p$ the pressure.

The mathematical analysis of the three-dimensional incompressible Navier--Stokes equations has
been developed in a wide variety of functional settings, with particular attention to spaces
that reflect the scaling and Fourier structure of the equations; see, for instance, \cite{BCD}.
In this direction, Lei and Lin established a global well-posedness result for small initial
data in the critical Fourier space $\mathcal{X}^{-1}(\mathbb{R}^3)$ \cite{LL}. Their approach
motivated several subsequent investigations of the Navier--Stokes equations in Fourier-type
spaces, including critical Fourier--Herz spaces \cite{CW2012}, nonhomogeneous Fourier--Lei--Lin
spaces \cite{Jlali2017}, and, more recently, Lei--Lin--Gevrey spaces \cite{Jlali2025}.  In
particular, these Fourier frameworks provide useful tools for studying well-posedness,
regularity, stability, and the long-time behavior of solutions; see also \cite{IGD}.

Another direction consists in modifying the Navier--Stokes equations by introducing nonlinear
damping terms. Such additional dissipative mechanisms have been considered in several works in
connection with the existence, uniqueness, regularity, and asymptotic behavior of solutions.
In particular, Cai and Jiu \cite{CJ2008} studied the incompressible
Navier--Stokes equations with nonlinear damping and established existence
results for weak and strong solutions. Subsequent investigations addressed
further questions concerning uniqueness and regularity
\cite{ZWL2011,Zhou2012}. These developments show that nonlinear damping
can play an important role in the qualitative analysis of the
three-dimensional system.

In a previous work \cite{BJ2026}, we investigated modified Navier--Stokes equations with
nonlinear damping in Fourier spaces. In that framework, we established local well-posedness and
studied blow-up properties of solutions using Fourier-space estimates and a fixed-point
argument. The purpose of the present work is to extend this analysis to a Fourier--Gevrey
setting. The introduction of an exponential Fourier weight allows us to investigate the
propagation of additional frequency regularity and, at the same time, to obtain refined
information on the behavior of solutions near a possible finite maximal existence time.

To formulate our results, we introduce the Fourier and Fourier--Gevrey spaces that will be used
throughout the paper. Gevrey regularity and analyticity for the Navier--Stokes equations have
been extensively investigated since the pioneering work of Foias and
Temam \cite{FT1989}; see also \cite{Liu1992,BJ2021} and the references
therein. In the Fourier setting, exponential
weights provide a natural way to describe this additional regularity.

Let $a>0$, $\sigma>1$, and $\rho\in\mathbb{R}$. We define
$$
\mathcal{X}^{\rho} :=\left\{u\in\mathcal{S}'(\mathbb{R}^{3})\,;\,\widehat{u}\in
L_{\mathrm{loc}}^1(\mathbb{R}^{3})\ \text{and}\ \int_{\mathbb{R}^{3}}|\xi|^{\rho}|\widehat{u}
(\xi)|\,d\xi<\infty\right\},
$$
and
$$
\mathcal{X}_{a,\sigma}^{\rho}:=\left\{u\in\mathcal{S}'(\mathbb{R}^{3})\,;\,\widehat{u}\in
L_{\mathrm{loc}}^1(\mathbb{R}^{3})\ \text{and}\ \int_{\mathbb{R}^{3}}|\xi|^{\rho}e^{a|\xi|
^{1/\sigma}}|\widehat{u}(\xi)|\,d\xi<\infty\right\}.
$$
The corresponding norms are given by
$$
\|u\|_{\mathcal{X}^{\rho}} =\int_{\mathbb{R}^{3}}|\xi|^{\rho}|\widehat{u}(\xi)|\,d\xi
$$
and
$$
\|u\|_{\mathcal{X}_{a,\sigma}^{\rho}} =\int_{\mathbb{R}^{3}}|\xi|^{\rho} e^{a|\xi|^{1/\sigma}}
|\widehat{u}(\xi)|\,d\xi.
$$
If the initial data $u^0$ are more regular, then, applying the Leray
projector $\mathbb{P}$ to eliminate the pressure term, we obtain the
following Duhamel integral formula:
$$
u(t) = e^{t\Delta}u^0 -\mathcal{B}(u,u)(t)-\alpha\mathcal{D}_m(u)(t),
$$
where
$$
\mathcal{B}(u,v)(t) =\int_0^t e^{(t-\tau)\Delta}\mathbb{P}(u\cdot\nabla v)(\tau)\,d\tau
$$
and
$$
\mathcal{D}_m(u)(t) =\sum_{k=1}^3\int_0^te^{(t-\tau)\Delta}\mathbb{P}\bigl(u_k^{2m+1}e_k\bigr)
(\tau)\,d\tau.
$$
This integral formulation will be used in the analysis developed below.

To further motivate the choice of the Fourier--Gevrey framework, consider the linear system
associated with the Navier--Stokes equations, with initial data $u^0\in\mathcal{X}^0(\mathbb{R}
^3)$. The smoothing properties of the heat semigroup, and their connection with instantaneous
analyticity and Gevrey regularity, are classical; see, for instance, \cite{FT1989,CKV2018} and
the references therein.
$$
\left\{
\begin{array}{ll}
\partial_t v-\Delta v=0,
&\text{in }\mathbb{R}^+\times\mathbb{R}^3,\\
\operatorname{div}v=0,
&\text{in }\mathbb{R}^+\times\mathbb{R}^3,\\
v(0,x)=u^0(x),
&\text{in }\mathbb{R}^3.
\end{array}
\right.
$$
This system admits the unique solution
$$
v(t)=e^{t\Delta}u^0\in\mathcal{C}\bigl(\mathbb{R}^+,\mathcal{X}^0(\mathbb{R}^3)\bigr).
$$
Moreover, this solution enjoys the following regularizing properties:
$$
\begin{array}{ll}
\textnormal{(R1)}
&
v\in L^1(\mathbb{R}^+,\mathcal{X}^2(\mathbb{R}^3)),
\\[2mm]
\textnormal{(R2)}
&
e^{a|D|^r}v(t)\in\mathcal{X}^0(\mathbb{R}^3),
\quad
\forall\,t>0,\ \forall\,a>0,\ 0<r<2,
\\
&
\text{and, for }r=2,\quad 0<a\leq t,
\\[2mm]
\textnormal{(R3)}
&
e^{a|D|^r}v
\in
\mathcal{C}^{\infty}
\bigl((0,\infty),\mathcal{X}^0(\mathbb{R}^3)\bigr),
\quad
\forall\,a>0,\ 0<r<2.
\end{array}
$$
In connection with the Fourier--Gevrey spaces introduced above, we set
$$
r=\frac{1}{\sigma}.
$$
Since $\sigma>1$, we have $0<r<1$.
Our study is therefore restricted to the case
$$
0<r<1,\qquad a>0.
$$
This choice is also motivated by the need to estimate the nonlinear terms through suitable product estimates in exponentially weighted Fourier spaces; see, for instance, \cite{JS2021, BaeBiswas2015}. More precisely, we use
\begin{equation}\label{eqprod1}
\|e^{a|D|^r}(fg)\|_{\mathcal{X}^0}
\leq C
\|e^{a|D|^r}f\|_{\mathcal{X}^0}
\|e^{a|D|^r}g\|_{\mathcal{X}^0}.
\end{equation}
Consequently,
$$
\left\|e^{a|D|^r}(f^{2m+1})\right\|_{\mathcal{X}^0}
\leq
C'\left\|e^{a|D|^r}f\right\|_{\mathcal{X}^0}^{2m+1}.
$$
We now distinguish three cases according to the value of $r$.
\begin{enumerate}
\item[\textbullet] \textbf{Case $r\in(1,2]$.}
For the treatment of the nonlinear terms, and in particular for estimating the product of two
functions as in \eqref{eqprod1}, one is naturally led to consider an inequality of the form
\begin{equation}\label{eqprod2}
e^{a|\xi|^r}\leq C e^{a|\xi-\eta|^r}e^{a|\eta|^r},\qquad\forall\,\xi,\eta\in\mathbb{R}^3.
\end{equation}

For $0<r\leq1$, inequality \eqref{eqprod2} holds; see, for instance, \cite{JS2021}, page 29. This estimate follows from the subadditivity
$$
|\xi|^r \leq |\xi-\eta|^r+|\eta|^r.
$$
However, when $r>1$, this subadditivity property is no longer valid. Instead, one only has
$$
|\xi|^r \leq 2^{r-1}\left(|\xi-\eta|^r+|\eta|^r\right),
$$
which yields
$$
e^{a|\xi|^r} \leq e^{2^{r-1}a|\xi-\eta|^r} e^{2^{r-1}a|\eta|^r}.
$$
Thus, the same exponential parameter $a$ is not preserved in the product estimate.
Consequently, the argument used in the present work does not close in the same Fourier--Gevrey
space when $r>1$.
\item[\textbullet] \textbf{Case $r\in(0,1)$.}
This case corresponds precisely to the Fourier--Gevrey spaces
$$
\mathcal{X}_{a,\sigma}^{0}(\mathbb{R}^3),\qquad \sigma=\frac{1}{r}>1.
$$
Since the function $s\mapsto s^r$ is subadditive for $0<r<1$, we have
$$
|\xi|^r\leq|\xi-\eta|^r+|\eta|^r,\qquad\xi,\eta\in\mathbb{R}^3,
$$
and consequently
$$
e^{a|\xi|^r}\leq e^{a|\xi-\eta|^r}e^{a|\eta|^r}.
$$
This property allows us to control the nonlinear terms in the same Fourier--Gevrey space and
therefore to apply the fixed-point arguments used in the local well-posedness analysis.

Moreover, the strict inequality $0<r<1$ provides an additional property of the exponential weight which will play a crucial role in the analysis of the maximal existence time and in the derivation of the blow-up criteria.

\item[\textbullet] \textbf{Case $r=1$.}
This is the analytic-regularity case. It is also well adapted to our problem and satisfies the
estimates \eqref{eqprod1} and \eqref{eqprod2}. However, the main difficulty lies in proving a
crucial result for our analysis, namely the equality between the maximal existence time in the
Fourier--Gevrey space and that in the Fourier space:
\begin{equation}\label{eqprod3}
T^*\bigl(\mathcal{X}_{a,\sigma}^{0}(\mathbb{R}^3)\bigr)
=
T^*\bigl(\mathcal{X}^{0}(\mathbb{R}^3)\bigr),
\qquad \sigma>1.
\end{equation}
For $0<r<1$, the proof of \eqref{eqprod3} relies on an elementary inequality of the form
(see \cite{JNA2014}, pages 96--97)
\begin{equation}\label{eqprod4}
e^{a|\xi|^r}
\leq
C
e^{a\max\{|\xi-\eta|^r,|\eta|^r\}}
e^{a\theta\min\{|\xi-\eta|^r,|\eta|^r\}},
\qquad
\forall\,\xi,\eta\in\mathbb{R}^3,
\end{equation}
with $\theta\in(0,1)$. This inequality, however, is not valid in the same form when $r=1$.
Consequently, the argument used for $0<r<1$ does not extend directly to the case $r=1$.
\end{enumerate}
The purpose of the present paper is to study the local well-posedness and the behavior near a
possible finite maximal existence time of solutions to the modified Navier--Stokes system $
(NSE_M)$ in the Fourier space $\mathcal{X}^0(\mathbb{R}^3)$ and in the Fourier--Gevrey spaces $
\mathcal{X}_{a,\sigma}^0(\mathbb{R}^3)$, with $a>0$ and $\sigma>1$.

We first establish the local existence and uniqueness of solutions in $\mathcal{X}^0(\mathbb{R}
^3)$ by means of a fixed-point argument. We then derive a continuation criterion for the
corresponding maximal solution. The same analysis is subsequently developed in the Fourier--
Gevrey space $\mathcal{X}_{a,\sigma}^0(\mathbb{R}^3)$.

Our main objective is to obtain refined blow-up criteria in the Fourier--Gevrey framework. More
precisely, assuming that the maximal existence time $T^*$ is finite, we show the divergence of
the Fourier--Gevrey norm as $t$ approaches $T^*$ and derive an integral blow-up criterion in a
weaker exponential weight. We also establish a quantitative lower bound for the solution near
$T^*$. By iterating the loss of exponential weight and passing to the limit, we finally obtain
a corresponding lower bound in the unweighted Fourier space $\mathcal{X}^0(\mathbb{R}^3)$.

We now state the main results of the paper.
\begin{theorem}\label{theo1}
(Local existence and uniqueness in $\mathcal{X}^0(\mathbb{R}^3)$)\\
Let $u^{0}\in \mathcal{X}^{0}(\mathbb{R}^{3})$ be a divergence-free vector field. Then there
exists a time $T=T(u^{0})>0$ such that problem $(NSE_M)$ admits a unique solution
$$
u \in \mathcal{C}_{T}\bigl(\mathcal{X}^{0}(\mathbb{R}^{3})\bigr)\cap L^{1}_{T}\bigl(\mathcal{X}
^{2}(\mathbb{R}^{3})\bigr).
$$
\end{theorem}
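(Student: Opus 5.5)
We will prove Theorem \ref{theo1} by a Banach fixed-point argument on the Duhamel formulation
$$
u=e^{t\Delta}u^0-\mathcal{B}(u,u)-\alpha\mathcal{D}_m(u)
$$
in the space $E_T:=\mathcal{C}_T(\mathcal{X}^0)\cap L^1_T(\mathcal{X}^2)$. We equip $E_T$ with the norm $\|u\|_{E_T}=\|u\|_{L^\infty_T(\mathcal{X}^0)}+\|u\|_{L^1_T(\mathcal{X}^2)}$. Note that $\|u\|_{\mathcal{X}^0}$ dominates $\|u\|_{L^\infty}$ and that $\mathcal{X}^0$ is an algebra, since $\widehat{fg}=(2\pi)^{-3}\widehat f*\widehat g$ and Young's inequality apply. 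Also, $\mathbb{P}$ has a symbol of modulus at most $1$, so it is bounded on every $\mathcal{X}^\rho$.

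\textbf{Linear estimates.} From $|\widehat{e^{t\Delta}u^0}(\xi)|=e^{-t|\xi|^2}|\widehat{u^0}(\xi)|$ we get $\|e^{t\Delta}u^0\|_{\mathcal{X}^0}\le\|u^0\|_{\mathcal{X}^0}$. Fubini gives
$$
\|e^{t\Delta}u^0\|_{L^1_T(\mathcal{X}^2)}=\int(1-e^{-T|\xi|^2})|\widehat{u^0}|\,d\xi,
$$
which tends to $0$ as $T\to0$ by dominated convergence. Continuity in time follows in the same way. For a forcing term $f$ we have, pointwise in $\xi$,
$$
\Bigl|\widehat{\int_0^te^{(t-\tau)\Delta}f\,d\tau}\Bigr|\le\int_0^te^{-(t-\tau)|\xi|^2}|\widehat f(\tau,\xi)|\,d\tau .
$$
This yields two bounds. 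First, if $|\xi|\,|\widehat f|$ is integrable in $\xi$, integrating in $t$ gives
$$
\Bigl\|\int_0^te^{(t-\tau)\Delta}f\,d\tau\Bigr\|_{L^\infty_T(\mathcal{X}^0)}+\Bigl\|\int_0^te^{(t-\tau)\Delta}f\,d\tau\Bigr\|_{L^1_T(\mathcal{X}^2)}\le C\|f\|_{L^1_T(\mathcal{X}^0)}.
$$
Second, for the damping term the simpler bound $\|\cdot\|_{L^\infty_T(\mathcal{X}^0)}\le\|f\|_{L^1_T(\mathcal{X}^0)}$ holds, together with $\|\cdot\|_{L^1_T(\mathcal{X}^2)}\le\|f\|_{L^1_T(\mathcal{X}^0)}$, since $\int_0^\infty|\xi|^2e^{-s|\xi|^2}ds=1$.

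\textbf{Nonlinear estimates.} Using $\operatorname{div}u=0$, write $u\cdot\nabla v=\operatorname{div}(u\otimes v)$. The Duhamel term then carries one derivative, and we use $|\xi|e^{-s|\xi|^2}$. The cleanest route is to estimate
$$
\|u\otimes v\|_{\mathcal{X}^1}\le C\bigl(\|u\|_{\mathcal{X}^0}\|v\|_{\mathcal{X}^1}+\|u\|_{\mathcal{X}^1}\|v\|_{\mathcal{X}^0}\bigr),
$$
which follows from $|\xi|\le|\xi-\eta|+|\eta|$. Then interpolate, $\|v\|_{\mathcal{X}^1}\le\|v\|_{\mathcal{X}^0}^{1/2}\|v\|_{\mathcal{X}^2}^{1/2}$, and apply Hölder in time. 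This gives
$$
\|\mathcal{B}(u,v)\|_{E_T}\le C\,T^{1/2}\|u\|_{E_T}\|v\|_{E_T}.
$$
Alternatively, the bound $\int_0^t|\xi|e^{-(t-\tau)|\xi|^2}d\tau\le|\xi|^{-1}$ gives a bound in terms of $\|u\|_{L^2_T\mathcal{X}^1}\|v\|_{L^2_T\mathcal{X}^1}$, whose smallness comes from the $L^1_T(\mathcal{X}^2)$ norm. For the damping term, the algebra property gives
$$
\|\mathcal{D}_m(u)\|_{E_T}\le CT\|u\|_{L^\infty_T(\mathcal{X}^0)}^{2m+1}.
$$
For differences, write $u_k^{2m+1}-v_k^{2m+1}=(u_k-v_k)\sum_{j=0}^{2m}u_k^jv_k^{2m-j}$. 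This gives
$$
\|\mathcal{D}_m(u)-\mathcal{D}_m(v)\|_{E_T}\le CT\bigl(\|u\|_{E_T}+\|v\|_{E_T}\bigr)^{2m}\|u-v\|_{E_T}.
$$

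\textbf{Fixed point.} Set $R=2\|u^0\|_{\mathcal{X}^0}+1$ and choose $T$ so small that $CT^{1/2}R<1/4$ and $CT(2R)^{2m}<1/4$. With these choices the map $\Phi(u)=e^{t\Delta}u^0-\mathcal{B}(u,u)-\alpha\mathcal{D}_m(u)$ sends the ball of radius $R$ in $E_T$ into itself and is a contraction there. The fixed point is divergence free because $\mathbb{P}$ projects onto divergence-free fields. Continuity in time with values in $\mathcal{X}^0$ follows from dominated convergence on the Fourier side.

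\textbf{Uniqueness and main obstacle.} Uniqueness in the whole class $E_T$, not only in the ball, follows from the same difference estimates on a short interval $[0,\delta]$ where both solutions are small relative to $\delta$, combined with a continuation argument. The main obstacle is the bilinear estimate. We must extract a small factor, either $T^{1/2}$ or the smallness of $\|e^{t\Delta}u^0\|_{L^1_T\mathcal{X}^2}$, without a critical-space structure. If the interpolation route is awkward for large data, we will instead exploit $\|u\|_{L^2_T\mathcal{X}^1}\le T^{1/4}\|u\|_{L^\infty\mathcal{X}^0}^{1/2}\cdots$ as above, so that the time-dependent constant still closes the contraction.
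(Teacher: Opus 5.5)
Your proof is correct and follows essentially the paper's route: a Banach fixed point in $E_T=\mathcal{C}_T(\mathcal{X}^0)\cap L^1_T(\mathcal{X}^2)$. The factor $T^{1/2}$ for $\mathcal{B}$ comes from $\|g\|_{\mathcal{X}^1}\le\|g\|_{\mathcal{X}^0}^{1/2}\|g\|_{\mathcal{X}^2}^{1/2}$ plus H\"older in time, so large data pose no difficulty and your fallback in the last paragraph is unnecessary; the factor $T$ for $\mathcal{D}_m$ comes from the algebra property of $\mathcal{X}^0$. Writing $u\cdot\nabla v=\operatorname{div}(u\otimes v)$ is a harmless variation, and your continuation argument for uniqueness in all of $E_T$ actually fills a point the paper leaves implicit, since it only proves uniqueness in its set $F_{R,T}$.

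The only slip is numerical. With $R=2\|u^0\|_{\mathcal{X}^0}+1$, your smallness conditions give only $\|\Phi(u)\|_{E_T}\le 2\|u^0\|_{\mathcal{X}^0}+R/2$, which is not $\le R$ once $\|u^0\|_{\mathcal{X}^0}>1/2$. Take for instance $R=4\|u^0\|_{\mathcal{X}^0}+1$ with the same conditions on $T$.
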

\begin{theorem}\label{theo2}(Maximal solution in $\mathcal{X}^0(\mathbb{R}^3)$)\\
Let $u^{0}\in \mathcal{X}^{0}(\mathbb{R}^{3})$ be a divergence-free vector field and let
$$
u\in \mathcal{C}
([0,T^*),\mathcal{X}^{0}(\mathbb{R}^{3}))\cap L^{1}_{loc}([0,T^*),\mathcal{X}^{2}(\mathbb{R}^{3}))
$$
be the maximal solution of problem $(NSE_M)$ given by Theorem \ref{theo1}. Then, for all
$t\in[0,T^*)$, we have
\begin{equation}\label{theo2-eqn1}
\|u(t)\|_{\mathcal{X}^0} \leq \|u^0\|_{\mathcal{X}
^0}\exp\Big(\int_0^t (\frac{1}{2}\|u(z)\|_{\mathcal{X}^0}^2+\alpha \|u(z)\|_{\mathcal{X}
^0}^{2m})dz\Big).
\end{equation}
\end{theorem}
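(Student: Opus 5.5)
The plan is to derive a pointwise-in-frequency differential inequality for $|\widehat{u}(t,\xi)|$ and then integrate over $\xi$ and apply Gronwall. Taking the Fourier transform of $(NSE_M)$ after applying $\mathbb{P}$, we get for a.e. $\xi$
$$
\partial_t\widehat{u}(t,\xi)+|\xi|^2\widehat{u}(t,\xi)=-\widehat{\mathbb{P}(u\cdot\nabla u)}(t,\xi)-\alpha\sum_{k=1}^3\widehat{\mathbb{P}(u_k^{2m+1}e_k)}(t,\xi).
$$
Multiplying by $\overline{\widehat{u}}$ and taking real parts, then dividing formally by $|\widehat u|$, gives
$$
\partial_t|\widehat{u}|+|\xi|^2|\widehat{u}|\leq|\widehat{u\cdot\nabla u}|+\alpha\sum_k|\widehat{u_k^{2m+1}}|,
$$
using that the symbol of $\mathbb{P}$ has norm at most $1$. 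To make this rigorous I would use $\sqrt{|\widehat u|^2+\varepsilon}$, or work directly with the Duhamel formula in Fourier variables; this costs nothing essential. Integrating in time and then in $\xi$ yields
$$
\|u(t)\|_{\mathcal{X}^0}+\int_0^t\|u\|_{\mathcal{X}^2}\leq\|u^0\|_{\mathcal{X}^0}+\int_0^t\Big(\|u\cdot\nabla u\|_{\mathcal{X}^0}+\alpha\sum_k\|u_k^{2m+1}\|_{\mathcal{X}^0}\Big)dz.
$$

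For the damping term I use that $\mathcal{X}^0$ is an algebra with constant $1$: $\widehat{fg}=(2\pi)^{-3}\widehat f*\widehat g$ (with the normalization making $\|fg\|_{\mathcal{X}^0}\le\|f\|_{\mathcal{X}^0}\|g\|_{\mathcal{X}^0}$). Hence
$$
\sum_k\|u_k^{2m+1}\|_{\mathcal{X}^0}\leq\sum_k\|u_k\|_{\mathcal{X}^0}^{2m+1}\leq\|u\|_{\mathcal{X}^0}^{2m}\|u\|_{\mathcal{X}^0}.
$$
This is exactly the $\alpha\|u\|^{2m}$ factor in the exponent.

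For the convection term, $\operatorname{div}u=0$ gives $u\cdot\nabla u=\operatorname{div}(u\otimes u)$. So $\|u\cdot\nabla u\|_{\mathcal{X}^0}\le\|u\otimes u\|_{\mathcal{X}^1}\le 2\|u\|_{\mathcal{X}^0}\|u\|_{\mathcal{X}^1}$, using $|\xi|\le|\xi-\eta|+|\eta|$ inside the convolution. The split should be symmetric, giving $\|u\|_{\mathcal{X}^0}\|u\|_{\mathcal{X}^1}$ with constant $1$. Then interpolate: $\|u\|_{\mathcal{X}^1}\le\|u\|_{\mathcal{X}^0}^{1/2}\|u\|_{\mathcal{X}^2}^{1/2}$ by Cauchy--Schwarz in $\xi$. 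Young's inequality then gives
$$
\|u\|_{\mathcal{X}^0}^{3/2}\|u\|_{\mathcal{X}^2}^{1/2}\leq\tfrac12\|u\|_{\mathcal{X}^2}+\tfrac12\|u\|_{\mathcal{X}^0}^{3}.
$$
Absorbing the $\mathcal{X}^2$ part into the dissipation (it is at most $\int_0^t\|u\|_{\mathcal{X}^2}$) leaves
$$
\|u(t)\|_{\mathcal{X}^0}\le\|u^0\|_{\mathcal{X}^0}+\int_0^t\Big(\tfrac12\|u\|_{\mathcal{X}^0}^2+\alpha\|u\|_{\mathcal{X}^0}^{2m}\Big)\|u\|_{\mathcal{X}^0}\,dz.
$$
Gronwall's lemma then gives \eqref{theo2-eqn1} on $[0,T^*)$. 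The integrability needed is guaranteed by $u\in\mathcal{C}([0,T^*),\mathcal{X}^0)\cap L^1_{loc}(\mathcal{X}^2)$ from Theorem \ref{theo1}.

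The main obstacle is getting the constants exactly $1/2$ and $1$. This requires a normalization of the Fourier transform under which the product estimate in $\mathcal{X}^0$ has constant $1$, and a careful symmetric treatment of $|\xi|$ in the convection term. If the product constant is not $1$, the stated coefficients would change. In that case I would adjust the Young splitting, e.g. use $\epsilon$-weights chosen to absorb all of $\|u\|_{\mathcal{X}^2}$, and track the convolution normalization consistently with the paper's conventions.
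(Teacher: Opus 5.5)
Your route is the paper's: a frequency-wise inequality for $|\widehat u(t,\xi)|$ (the paper uses the same $\sqrt{|\widehat u|^2+\varepsilon}$ device in the proof of Theorem~\ref{theo4}), integration in $t$ and $\xi$, the algebra property \eqref{lem2-eqn2}, the interpolation \eqref{lem1-eqn1}, Young, and Gr\"onwall. However, the convection estimate, which you yourself flag as the delicate point, is resolved incorrectly.

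Writing $u\cdot\nabla u=\operatorname{div}(u\otimes u)$ and using $|\xi|\le|\xi-\eta|+|\eta|$ produces two terms of equal size. So you only get $\|u\cdot\nabla u\|_{\mathcal{X}^0}\le 2\|u\|_{\mathcal{X}^0}\|u\|_{\mathcal{X}^1}$. No ``symmetric split'' turns this $2$ into $1$: once you bound by $\|u\otimes u\|_{\mathcal{X}^1}$ the factor is genuinely there. Take a unit mode at frequency of order $1$ plus a mode of mass $\epsilon$ at frequency $R$, both with amplitude close to $e_1$. Then $\|u\otimes u\|_{\mathcal{X}^1}/(\|u\|_{\mathcal{X}^0}\|u\|_{\mathcal{X}^1})\to 2$ as $\epsilon\to0$, $\epsilon R\to\infty$.

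The constant matters. With product constant $C$, $x=\|u\|_{\mathcal{X}^0}$, $y=\|u\|_{\mathcal{X}^2}$, the best admissible absorption is $Cx^{3/2}y^{1/2}\le y+\tfrac{C^2}{4}x^3$. For $C=2$ this only yields $\exp\bigl(\int_0^t(\|u\|_{\mathcal{X}^0}^2+\alpha\|u\|_{\mathcal{X}^0}^{2m})\,dz\bigr)$. Your fallback with $\epsilon$-weights cannot lower that coefficient to $\tfrac12$.

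The remedy is not to pass to divergence form, which is what the paper implicitly does. With the normalization $\widehat{fg}=\widehat f*\widehat g$, one has $\mathcal{F}(u\cdot\nabla u)(\xi)=i\int(\widehat u(\xi-\eta)\cdot\eta)\,\widehat u(\eta)\,d\eta$. Hence $|\mathcal{F}(u\cdot\nabla u)(\xi)|\le\int|\widehat u(\xi-\eta)|\,|\eta|\,|\widehat u(\eta)|\,d\eta$, so $\|u\cdot\nabla u\|_{\mathcal{X}^0}\le\|u\|_{\mathcal{X}^0}\|\nabla u\|_{\mathcal{X}^0}=\|u\|_{\mathcal{X}^0}\|u\|_{\mathcal{X}^1}$ with constant $1$. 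If you prefer the divergence form, use incompressibility, $\xi\cdot\widehat u(\xi-\eta)=\eta\cdot\widehat u(\xi-\eta)$, instead of the triangle inequality. With $C=1$ the rest of your argument goes through with room to spare, since Young then gives $\tfrac14\le\tfrac12$.

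Your damping bound $\sum_k\|u_k\|_{\mathcal{X}^0}^{2m+1}\le\|u\|_{\mathcal{X}^0}^{2m+1}$ is correct but deserves a line when $|\widehat u(\xi)|$ is the Euclidean norm. By Minkowski, $\bigl(\sum_k\|u_k\|_{\mathcal{X}^0}^2\bigr)^{1/2}\le\|u\|_{\mathcal{X}^0}$, and $\|\cdot\|_{\ell^{2m+1}}\le\|\cdot\|_{\ell^2}$. Using only $\|u_k\|_{\mathcal{X}^0}\le\|u\|_{\mathcal{X}^0}$ would give $3\alpha$ instead of $\alpha$.
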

\begin{theorem}\label{theo3}(Local existence and uniqueness in $\mathcal{X}^0_{a,\sigma}(\mathbb{R}^3)$)\\
Let $u^{0}\in \mathcal{X}_{a,\sigma}^{0}(\mathbb{R}^{3})$ be a divergence-free vector field.
Then there exists a time $T=T(u^{0})>0$ such that problem $(NSE_M)$ admits a unique solution
$$
u \in \mathcal{C}_{T}( \mathcal{X}_{a,\sigma}^{0}(\mathbb{R}^{3}))\cap L^{1}_{T}(\mathcal{X}_{a,\sigma}^{2}(\mathbb{R}^{3})).
$$
\end{theorem}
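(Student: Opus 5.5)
The plan is to run a fixed-point argument for the Duhamel formulation
$$
u=e^{t\Delta}u^0-\mathcal{B}(u,u)-\alpha\mathcal{D}_m(u),
$$
after conjugating by the Gevrey weight. We set $w=e^{a|D|^r}u$ with $r=1/\sigma\in(0,1)$, so that $\|u\|_{\mathcal{X}^\rho_{a,\sigma}}=\|w\|_{\mathcal{X}^\rho}$, and we work in the space
$$
E_T=\mathcal{C}_T(\mathcal{X}^0_{a,\sigma})\cap L^1_T(\mathcal{X}^2_{a,\sigma}),\qquad \|u\|_{E_T}=\|u\|_{L^\infty_T(\mathcal{X}^0_{a,\sigma})}+\|u\|_{L^1_T(\mathcal{X}^2_{a,\sigma})}.
$$
We look for a fixed point in a small ball of the subspace where both the Gevrey and the free-evolution contributions are controlled, following the same scheme as the proof of Theorem~\ref{theo1}.

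\textbf{Step 1 (linear estimates).} For the free part, $|\widehat{e^{t\Delta}u^0}(\xi)|=e^{-t|\xi|^2}|\widehat{u^0}(\xi)|$ and the weight commutes with the semigroup. This gives
$$
\|e^{t\Delta}u^0\|_{L^\infty_T(\mathcal{X}^0_{a,\sigma})}\le\|u^0\|_{\mathcal{X}^0_{a,\sigma}},
\qquad
\|e^{t\Delta}u^0\|_{L^1_T(\mathcal{X}^2_{a,\sigma})}=\int(1-e^{-T|\xi|^2})e^{a|\xi|^r}|\widehat{u^0}|\,d\xi .
$$
By dominated convergence the second quantity tends to $0$ as $T\to0$; this is the source of smallness.

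For a Duhamel term $\int_0^te^{(t-\tau)\Delta}\mathbb{P}f\,d\tau$ with $f\in L^1_T(\mathcal{X}^0_{a,\sigma})$, we take the modulus of Fourier transforms, use $|\widehat{\mathbb{P}f}|\le|\widehat f|$, and apply Fubini in time. This yields the maximal-regularity bound
$$
\|\cdot\|_{L^\infty_T(\mathcal{X}^0_{a,\sigma})}+\|\cdot\|_{L^1_T(\mathcal{X}^2_{a,\sigma})}\le 2\|f\|_{L^1_T(\mathcal{X}^0_{a,\sigma})}.
$$
The factor $|\xi|^2\int_0^te^{-(t-\tau)|\xi|^2}\,dt\le1$ is what makes this work.

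\textbf{Step 2 (nonlinear estimates).} We write $u\cdot\nabla v=\operatorname{div}(u\otimes v)$, so the symbol of $\mathcal{B}$ carries an extra $|\xi|$. The subadditivity $|\xi|^r\le|\xi-\eta|^r+|\eta|^r$ gives the product law \eqref{eqprod1} in the form $e^{a|\xi|^r}\le e^{a|\xi-\eta|^r}e^{a|\eta|^r}$. Combining this with $|\xi|\le|\xi-\eta|+|\eta|$ gives
$$
\|e^{a|D|^r}(u\otimes v)\|_{\mathcal{X}^1}\le\|u\|_{\mathcal{X}^1_{a,\sigma}}\|v\|_{\mathcal{X}^0_{a,\sigma}}+\|u\|_{\mathcal{X}^0_{a,\sigma}}\|v\|_{\mathcal{X}^1_{a,\sigma}}.
$$
Interpolation $\|u\|_{\mathcal{X}^1_{a,\sigma}}\le\|u\|_{\mathcal{X}^0_{a,\sigma}}^{1/2}\|u\|_{\mathcal{X}^2_{a,\sigma}}^{1/2}$ (Cauchy--Schwarz) and Hölder in time then give
$$
\|\mathcal{B}(u,v)\|_{E_T}\le C\bigl(\|u\|_{L^\infty_T\mathcal{X}^0_{a,\sigma}}\|u\|_{L^1_T\mathcal{X}^2_{a,\sigma}}\bigr)^{1/2}\|v\|_{L^\infty_T\mathcal{X}^0_{a,\sigma}}^{1/2}\|v\|_{L^1_T\mathcal{X}^2_{a,\sigma}}^{1/2}+(\text{sym.}).
$$
We need to use the smoothing of the semigroup on $|\xi|$ once more, exactly as in the $\mathcal{X}^0$ case. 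For the damping term, iterating \eqref{eqprod1} (the constant is $1$ here) gives
$$
\|\mathcal{D}_m(u)\|_{E_T}\le 2T\|u\|^{2m+1}_{L^\infty_T(\mathcal{X}^0_{a,\sigma})},
$$
and the difference $\mathcal{D}_m(u)-\mathcal{D}_m(v)$ is handled by factoring $u_k^{2m+1}-v_k^{2m+1}=(u_k-v_k)\sum_j u_k^jv_k^{2m-j}$.

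\textbf{Step 3 (contraction and uniqueness).} The main obstacle is that the bilinear term is not small merely because $T$ is small in the $L^\infty\mathcal{X}^0$ component. We would handle this as in Lei--Lin-type arguments, by working in the ball
$$
\{\|u\|_{L^\infty_T\mathcal{X}^0_{a,\sigma}}\le 2\|u^0\|_{\mathcal{X}^0_{a,\sigma}},\ \|u\|_{L^1_T\mathcal{X}^2_{a,\sigma}}\le 2\varepsilon(T)\},
$$
where $\varepsilon(T)$ is the free $L^1_T\mathcal{X}^2_{a,\sigma}$ norm from Step 1. The bilinear estimate then produces factors $(\|u^0\|\,\varepsilon(T))^{1/2}$, and the damping produces factors $T\|u^0\|^{2m}$. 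For the $L^\infty$ component we instead split the free part as high-frequency plus low-frequency (a low-frequency cutoff $|\xi|\le N$ gives $L^1_T\mathcal{X}^2\lesssim TN^2$, and the high-frequency part has small $\mathcal{X}^0_{a,\sigma}$ norm), choosing $N$ and then $T$. Both contributions become small for $T=T(u^0)$ small, so the map is a contraction and the Banach fixed-point theorem gives existence. Uniqueness in $E_T$ follows from the same difference estimates applied on short subintervals, together with continuity in time (dominated convergence on the Fourier side).
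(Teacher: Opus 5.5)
There is a genuine gap. It is in the bilinear estimate of Step~2, and it carries over into Step~3.

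\textbf{The bilinear estimate is false.} Your bound has right-hand side $C(\|u\|_{L^\infty_T\mathcal{X}^0_{a,\sigma}}\|u\|_{L^1_T\mathcal{X}^2_{a,\sigma}})^{1/2}\|v\|^{1/2}_{L^\infty_T\mathcal{X}^0_{a,\sigma}}\|v\|^{1/2}_{L^1_T\mathcal{X}^2_{a,\sigma}}+(\text{sym.})$. This is the critical Lei--Lin estimate transplanted to $\mathcal{X}^0$, where it fails because it is missing a factor $T^{1/2}$. Concretely, take the time-independent field $u=v=\phi(\delta x)$ with $\phi$ divergence-free, $\widehat\phi$ supported in the unit ball, and $\mathbb{P}\operatorname{div}(\phi\otimes\phi)\neq0$. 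For $T\le1$ and $\delta\le1/2$ one has $\|\mathcal{B}(u,u)(T)\|_{\mathcal{X}^0_{a,\sigma}}\gtrsim T\delta$, while your right-hand side is $\lesssim T\delta^2$.

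Your own ingredients give the correct bound: the product law, $\|v\|_{\mathcal{X}^1_{a,\sigma}}\le\|v\|^{1/2}_{\mathcal{X}^0_{a,\sigma}}\|v\|^{1/2}_{\mathcal{X}^2_{a,\sigma}}$, maximal regularity, and then Cauchy--Schwarz in time, $\int_0^T\|v\|^{1/2}_{\mathcal{X}^2_{a,\sigma}}\,d\tau\le T^{1/2}\|v\|^{1/2}_{L^1_T\mathcal{X}^2_{a,\sigma}}$. This yields the estimate of Lemma~\ref{lem44},
\[
\|\mathcal{B}(u,v)\|_{L^\infty_T(\mathcal{X}^0_{a,\sigma})}+\|\mathcal{B}(u,v)\|_{L^1_T(\mathcal{X}^2_{a,\sigma})}\le 2T^{\frac12}\|u\|_{L^\infty_T(\mathcal{X}^0_{a,\sigma})}\|v\|^{\frac12}_{L^\infty_T(\mathcal{X}^0_{a,\sigma})}\|v\|^{\frac12}_{L^1_T(\mathcal{X}^2_{a,\sigma})},
\]
plus the symmetric term if you pass through $\operatorname{div}(u\otimes v)$. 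The factor $T^{1/2}$ reflects that $\mathcal{X}^0_{a,\sigma}$ is subcritical, and it is the whole mechanism of the paper's proof.

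\textbf{Consequences for Step~3.} The ``main obstacle'' you identify does not exist, and the set-up you propose to overcome it does not close. With $L^1_T\mathcal{X}^2_{a,\sigma}$-radius $2\varepsilon(T)$, invariance of the ball requires the nonlinear contributions to that norm to be at most $\varepsilon(T)$. This fails in two places:
\begin{itemize}
\item The damping term is only bounded by $2\alpha T(2\|u^0\|_{\mathcal{X}^0_{a,\sigma}})^{2m+1}$. For $u^0\in\mathcal{X}^2_{a,\sigma}$ one has $\varepsilon(T)\le T\|u^0\|_{\mathcal{X}^2_{a,\sigma}}$, so you would need $\|u^0\|_{\mathcal{X}^2_{a,\sigma}}\gtrsim\|u^0\|^{2m+1}_{\mathcal{X}^0_{a,\sigma}}$. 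That fails for data concentrated at low frequencies, whatever $T$ is.
\item Even your (incorrect) bilinear bound gives a contribution $\lesssim\|u^0\|_{\mathcal{X}^0_{a,\sigma}}\varepsilon(T)$, which is below $\varepsilon(T)$ only for small data.
\end{itemize}
The high/low frequency splitting is applied to the $L^\infty$ component, where nothing needs to be gained. It is not worked out for the $L^1_T\mathcal{X}^2_{a,\sigma}$ component, where it would be needed.

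\textbf{Repair.} Follow the paper. Write $\|u^0\|$ for $\|u^0\|_{\mathcal{X}^0_{a,\sigma}}$, and take the ball $\{\|u\|_{L^\infty_T\mathcal{X}^0_{a,\sigma}}\le2\|u^0\|,\ \|u\|_{L^1_T\mathcal{X}^2_{a,\sigma}}\le R\}$ with $R>0$ fixed. Then choose $T$ so small that:
\begin{itemize}
\item $\|e^{t\Delta}u^0\|_{L^1_T\mathcal{X}^2_{a,\sigma}}<R/3$, which your Step~1 provides;
\item $2\sqrt2T^{1/2}R^{1/2}\|u^0\|^{3/2}+2^{2m+1}\alpha T\|u^0\|^{2m+1}\le\min\{\|u^0\|,2R/3\}$;
\item the contraction factors $\sqrt2T^{1/2}\|u^0\|^{1/2}(\sqrt2\|u^0\|^{1/2}+R^{1/2})$ and $(2m+1)2^{2m+1}\alpha T\|u^0\|^{2m}$ are at most $1/8$.
\end{itemize}
No frequency splitting is needed.

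Several parts of your proposal are fine: Step~1, the damping estimate with constant $1$, and the factorization of $u_k^{2m+1}-v_k^{2m+1}$. Your remark on uniqueness beyond the ball is also sound, and it covers something Banach's theorem alone does not give.
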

\begin{theorem}\label{theo4}(Blow-up results in $\mathcal{X}_{a,\sigma}^0(\mathbb{R}^3)$)\\
Let $u^{0}\in \mathcal{X}_{a,\sigma}^{0}(\mathbb{R}^{3})$ be a divergence-free vector field and
let
$$
u\in \mathcal{C}([0,T^*),\mathcal{X}_{a,\sigma}^{0}(\mathbb{R}^{3}))\cap L^{1}_{loc}([0,T^*),
\mathcal{X}_{a,\sigma}^{2}(\mathbb{R}^{3}))
$$
be the maximal solution of problem $(NSE_M)$ given by Theorem \ref{theo3}. Then, for all
$t\in[0,T^*)$, we have
\begin{equation}\label{theo4-eqn1}
\|u(t)\|_{\mathcal{X}_{a,\sigma}^0}
\leq
\|u^0\|_{\mathcal{X}_{a,\sigma}^0}
\exp\left(
\int_0^t
\left(
\frac{1}{2}\|u(\tau)\|_{\mathcal{X}_{a,\sigma}^0}^{2}
+
\alpha\|u(\tau)\|_{\mathcal{X}_{a,\sigma}^0}^{2m}
\right)d\tau
\right).
\end{equation}
Moreover, if $T^*<\infty$, then
\begin{equation}\label{theo4-eqn2}
\limsup_{t\nearrow T^*}
\|u(t)\|_{\mathcal{X}_{a,\sigma}^0}
=\infty.
\end{equation}
Furthermore, the following integral blow-up criterion holds:
\begin{equation}\label{theo4-eqn3}
\int_0^{T^*}
\|u(\tau)\|_{\mathcal{X}_{\frac{a}{\sigma},\sigma}^0}^{2m}
\,d\tau
=\infty.
\end{equation}
In addition, for every $t\in[0,T^*)$,
\begin{equation}\label{theo4-eqn4}
\frac{1}{(T^*-t)^{\frac{1}{2m}}}
\leq
(2mC_{m,\alpha})^{\frac{1}{2m}}
e^{C_{m,\alpha}T^*}
\|u(t)\|_{\mathcal{X}_{\frac{a}{\sigma},\sigma}^0}.
\end{equation}
Finally, the corresponding lower bound also holds in the unweighted Fourier space:
\begin{equation}\label{theo4-eqn5}
\frac{1}{(T^*-t)^{\frac{1}{2m}}}
\leq
(2mC_{m,\alpha})^{\frac{1}{2m}}
e^{C_{m,\alpha}T^*}
\|u(t)\|_{\mathcal{X}^0},
\qquad
\forall\,t\in[0,T^*).
\end{equation}
\end{theorem}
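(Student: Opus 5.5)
Five estimates are to be proved, in the order (theo4-eqn1), (theo4-eqn2), (theo4-eqn3), (theo4-eqn4), (theo4-eqn5).

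\textbf{The Gevrey energy inequality (theo4-eqn1).} This mirrors Theorem \ref{theo2}. Write $w(t,\xi)=e^{a|\xi|^{1/\sigma}}|\widehat u(t,\xi)|$ and take the Fourier transform of $(NSE_M)$. Multiplying by $\overline{\widehat u}/|\widehat u|$ gives, in the weak sense,
$$\partial_t|\widehat u|+|\xi|^2|\widehat u|\le |\widehat{\mathbb{P}(u\cdot\nabla u)}|+\alpha\sum_k|\widehat{u_k^{2m+1}}|.$$
Multiply by the weight and integrate in $\xi$.
\begin{itemize}
\item Convection term: write $u\cdot\nabla u=\operatorname{div}(u\otimes u)$. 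Use the subadditivity $e^{a|\xi|^r}\le e^{a|\xi-\eta|^r}e^{a|\eta|^r}$ and $|\xi|\le|\xi-\eta|+|\eta|$. Cauchy--Schwarz then gives the bound $\|u\|_{\mathcal X^0_{a,\sigma}}\|u\|_{\mathcal X^1_{a,\sigma}}$.
\item Interpolation: $\|u\|_{\mathcal X^1_{a,\sigma}}\le\|u\|^{1/2}_{\mathcal X^0_{a,\sigma}}\|u\|^{1/2}_{\mathcal X^2_{a,\sigma}}$. Young's inequality then bounds the convection term by $\|u\|_{\mathcal X^2_{a,\sigma}}+\frac12\|u\|^2_{\mathcal X^0_{a,\sigma}}\|u\|_{\mathcal X^0_{a,\sigma}}$ (up to normalisation of constants).
\item Dissipation absorbs $\|u\|_{\mathcal X^2_{a,\sigma}}$.
\item Damping term: the product estimate (\ref{eqprod1}) bounds it by $\alpha\|u\|^{2m+1}_{\mathcal X^0_{a,\sigma}}$.
\end{itemize}
Gronwall's lemma then yields (theo4-eqn1).

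\textbf{Blow-up of the Gevrey norm (theo4-eqn2).} Argue by contradiction, using the standard continuation argument. The local time $T(u^0)$ from the fixed-point proof of Theorem \ref{theo3} depends only on a lower bound involving $\|u^0\|_{\mathcal X^0_{a,\sigma}}$. Suppose the norm stayed bounded near $T^*$. Restarting from $u(T^*-\varepsilon)$ with a uniform existence time would extend the solution past $T^*$, a contradiction.

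\textbf{Integral criterion (theo4-eqn3) and the weaker weight.} Here we need a bound that allows the weight to shrink from $a$ to $a/\sigma$, and this is where (\ref{eqprod4}) is used. The plan is as follows.
\begin{itemize}
\item Write the solution as the free part plus Duhamel terms starting from a time $t_0$ close to $T^*$. Use that $e^{(t-\tau)\Delta}$ pays for weight growth: $\sup_{s}e^{(a-a/\sigma)s^{r}-ts^2}$ is finite for $t>0$.
\item Estimate each nonlinear product with (\ref{eqprod4}), so that the high factor carries the full weight and the low factor carries weight $a\theta$. This yields an estimate of the form
$$\|u(t)\|_{\mathcal X^0_{a,\sigma}}\le C\|u(t_0)\|_{\mathcal X^0_{a/\sigma,\sigma}}\exp\Big(C\int_{t_0}^t\|u\|^{2m}_{\mathcal X^0_{a/\sigma,\sigma}}+\|u\|^2_{\mathcal X^0_{a/\sigma,\sigma}}\Big).$$
\item Control the quadratic term by the $2m$-term plus a constant, using Hölder in time on the finite interval together with $m\ge1$.
\end{itemize}
If the integral in (theo4-eqn3) were finite, the right-hand side would stay bounded, contradicting (theo4-eqn2). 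This step is the \textbf{main obstacle}: making the loss from $a$ to $a/\sigma$ precise, and ensuring that the constants stay uniform as $t_0\to T^*$. I would first try to prove the closed estimate above with $\theta=1/\sigma$ in (\ref{eqprod4}).

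\textbf{Lower bounds (theo4-eqn4) and (theo4-eqn5).} Set $f(t)=\|u(t)\|_{\mathcal X^0_{a/\sigma,\sigma}}$. From the differential inequality in the weaker weight, $f'\le C_{m,\alpha}(f+f^{2m+1})$. Then $g=e^{-C_{m,\alpha}t}f$ satisfies $g'\le C_{m,\alpha}e^{2mC_{m,\alpha}T^*}g^{2m+1}$. Integrate $-\frac{1}{2m}(g^{-2m})'\le\cdots$ from $t$ to $T^*$, using $g(s)\to\infty$ along a sequence by (theo4-eqn3). This gives
$$g(t)^{-2m}\le 2mC_{m,\alpha}e^{2mC_{m,\alpha}T^*}(T^*-t),$$
which is (theo4-eqn4).

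For (theo4-eqn5), first note $T^*(\mathcal X^0_{a,\sigma})=T^*(\mathcal X^0)$ by (\ref{eqprod3}), proved from (\ref{eqprod4}). Then iterate the weight loss with $a_n=a\sigma^{-n}\to0$. The constants $C_{m,\alpha}$ are independent of $a$, so (theo4-eqn4) holds with $\|u(t)\|_{\mathcal X^0_{a_n,\sigma}}$ for every $n$. Monotone convergence as $n\to\infty$ then gives (theo4-eqn5).
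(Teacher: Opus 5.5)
Your overall architecture matches the paper's: an energy inequality with Gronwall, a contradiction argument for \eqref{theo4-eqn2}, Gronwall with coefficients in the weaker weight for \eqref{theo4-eqn3}, an ODE comparison for \eqref{theo4-eqn4}, and iteration over $a_n=a\sigma^{-n}$ for \eqref{theo4-eqn5}. However, the step you flag as the main obstacle, \eqref{theo4-eqn3}, is set up in a way that does not work as stated, so as written the proposal has a gap there.

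\textbf{The gap in \eqref{theo4-eqn3}.} The estimate you aim for, $\|u(t)\|_{\mathcal{X}^0_{a,\sigma}}\le C\|u(t_0)\|_{\mathcal{X}^0_{a/\sigma,\sigma}}\exp(\cdots)$, cannot hold with $C$ independent of $t-t_0$. At $t=t_0$ it would assert $\|u(t_0)\|_{\mathcal{X}^0_{a,\sigma}}\le C\|u(t_0)\|_{\mathcal{X}^0_{a/\sigma,\sigma}}$, which fails in general. Correspondingly, the gain $\sup_s e^{(a-a/\sigma)s^{r}-(t-t_0)s^2}$ blows up as $t\downarrow t_0$.

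There is a second problem if you also let the semigroup absorb the derivative in $u\cdot\nabla u$ via $|\xi|e^{-(t-\tau)|\xi|^2}\lesssim(t-\tau)^{-1/2}$. You then get the Volterra coefficient $(t-\tau)^{-1/2}\|u(\tau)\|_{\mathcal{X}^0_{a/\sigma,\sigma}}$. Under your contradiction hypothesis this norm is only in $L^{2m}_t$. For $m=1$ that does not make $\int_{t_0}^t(t-\tau)^{-1/2}\|u(\tau)\|_{\mathcal{X}^0_{a/\sigma,\sigma}}d\tau$ bounded, so the Gronwall step need not close.

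None of this machinery is needed, because $\|u^0\|_{\mathcal{X}^0_{a,\sigma}}<\infty$ and no weight ever has to be regained. The paper simply reruns your energy inequality from \eqref{theo4-eqn1}, replacing the symmetric product bounds by the asymmetric ones that \eqref{eqprod4} gives with $\theta=1/\sigma$, $C=1$:
\begin{itemize}
\item $\|u\cdot\nabla u\|_{\mathcal{X}^0_{a,\sigma}}\le 4\|u\|_{\mathcal{X}^0_{a/\sigma,\sigma}}\|u\|_{\mathcal{X}^1_{a,\sigma}}$, from Lemma \ref{lem3} with $s=0$;
\item $\|u^{2m+1}\|_{\mathcal{X}^0_{a,\sigma}}\le 2^{2m}\|u\|_{\mathcal{X}^0_{a,\sigma}}\|u\|^{2m}_{\mathcal{X}^0_{a/\sigma,\sigma}}$, from \eqref{lem33-eqn1}.
\end{itemize}
Interpolation and Young put $\frac12\|u\|_{\mathcal{X}^2_{a,\sigma}}$ into the dissipation and leave the integrable coefficient $8\|u\|^2_{\mathcal{X}^0_{a/\sigma,\sigma}}$. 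Gronwall from $t=0$ then gives $\|u(t)\|_{\mathcal{X}^0_{a,\sigma}}\le\|u^0\|_{\mathcal{X}^0_{a,\sigma}}\exp\bigl(C\int_0^t(\|u\|^2_{\mathcal{X}^0_{a/\sigma,\sigma}}+\|u\|^{2m}_{\mathcal{X}^0_{a/\sigma,\sigma}})d\tau\bigr)$. Your contradiction with \eqref{theo4-eqn2} then goes through exactly as you describe.

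\textbf{The other steps.} These are sound, with some differences from the paper.

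For \eqref{theo4-eqn2}, you use continuation with a uniform local time. The paper instead proves $\int_0^{T^*}\|u\|_{\mathcal{X}^2_{a,\sigma}}<\infty$, shows $u(t)$ is Cauchy in $\mathcal{X}^0_{a,\sigma}$ as $t\to T^*$, and restarts from the limit. Your route works, but as written the proof of Theorem \ref{theo3} does not give a time depending only on the norm: $T_2$ comes from dominated convergence and depends on the profile of $u^0$. You need to take $R=4\|u^0\|_{\mathcal{X}^0_{a,\sigma}}$, which is legitimate since $\|e^{t\Delta}u^0\|_{L^1_T(\mathcal{X}^2_{a,\sigma})}\le\|u^0\|_{\mathcal{X}^0_{a,\sigma}}$.

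For \eqref{theo4-eqn4}, your Bernoulli comparison for $g=e^{-Ct}f$ integrates from $t$ toward a sequence where $f\to\infty$. The paper instead integrates $Y'e^{-2mCY}\le\cdots$ to bound the initial norm from below and then shifts the initial time to $r_0$. Your version is more direct and yields exactly the constant in \eqref{theo4-eqn4}.

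For \eqref{theo4-eqn5}, do not cite \eqref{eqprod3}. The paper never proves it separately; it obtains it as a consequence of \eqref{theo4-eqn5}, so the citation is circular. It is also unnecessary. At each stage, the lower bound just proved gives $\|u(t)\|_{\mathcal{X}^0_{a_n,\sigma}}\to\infty$ as $t\nearrow T^*$. That is precisely the input, in place of \eqref{theo4-eqn2}, needed to rerun \eqref{theo4-eqn3}--\eqref{theo4-eqn4} at the next level.

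Finally, to get exactly the constants $\frac12$ and $\alpha$ in \eqref{theo4-eqn1}, bound $u\cdot\nabla u$ directly by $\|u\|_{\mathcal{X}^0_{a,\sigma}}\|u\|_{\mathcal{X}^1_{a,\sigma}}$; the divergence form costs a factor $2$. For the damping term, use \eqref{lem22-eqn2}, which has constant $1$, rather than \eqref{eqprod1}.
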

\section{\bf Preliminary Results}
In this section, we collect some preliminary estimates in the Fourier and Fourier--Gevrey
spaces that will be used throughout the paper.
\begin{lem}\label{lem1}\pn
Let $f\in\mathcal{X}^{0}(\mathbb{R}^3)\cap \mathcal{X}^2(\mathbb{R}^3)$, then $f\in\mathcal{X}^1(\mathbb{R}^3)$ and
\begin{eqnarray}\label{lem1-eqn1}
\|f\|_{\mathcal{X}^1}
&\leq &  \|f\|_{\mathcal{X}^0}^{\frac{1}{2}}\|f\|_{\mathcal{X}^2}^{\frac{1}{2}}.
\end{eqnarray}
Let $f\in\mathcal{X}_{a,\sigma}^{0}(\mathbb{R}^3)\cap \mathcal{X}_{a,\sigma}^2(\mathbb{R}^3)$,
then $f\in\mathcal{X}_{a,\sigma}^1(\mathbb{R}^3)$ and
\begin{eqnarray}\label{lem1-eqn2}
\|f\|_{\mathcal{X}_{a,\sigma}^1}
&\leq & \|f\|_{\mathcal{X}_{a,\sigma}^0}^{\frac{1}{2}}\|f\|_{\mathcal{X}_{a,\sigma}^2}^{\frac{1}{2}}.	
\end{eqnarray}
\end{lem}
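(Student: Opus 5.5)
The plan is to prove both inequalities at once by applying the Cauchy--Schwarz inequality on the Fourier side, with the measure weighted by $|\widehat f(\xi)|$. In the Fourier--Gevrey case the measure also carries the exponential weight $e^{a|\xi|^{1/\sigma}}$. The unweighted inequality \eqref{lem1-eqn1} is the special case $a=0$ of the weighted one, so the argument only needs to be written once.

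First, fix $f\in\mathcal{X}_{a,\sigma}^{0}\cap\mathcal{X}_{a,\sigma}^{2}$. For each $\xi$, factor the integrand as
$$
|\xi|\,e^{a|\xi|^{1/\sigma}}|\widehat f(\xi)|
=\Bigl(e^{a|\xi|^{1/\sigma}}|\widehat f(\xi)|\Bigr)^{\frac12}\Bigl(|\xi|^{2}e^{a|\xi|^{1/\sigma}}|\widehat f(\xi)|\Bigr)^{\frac12}.
$$
This identity holds pointwise because the exponential weight and $|\widehat f|$ split evenly between the two factors. Next, integrate over $\mathbb{R}^3$ and apply Cauchy--Schwarz in $L^2(\mathbb{R}^3)$ to the two square-root factors. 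This gives
$$
\int_{\mathbb{R}^3}|\xi|e^{a|\xi|^{1/\sigma}}|\widehat f(\xi)|\,d\xi\le\Bigl(\int_{\mathbb{R}^3} e^{a|\xi|^{1/\sigma}}|\widehat f|\,d\xi\Bigr)^{\frac12}\Bigl(\int_{\mathbb{R}^3}|\xi|^2e^{a|\xi|^{1/\sigma}}|\widehat f|\,d\xi\Bigr)^{\frac12},
$$
which is exactly \eqref{lem1-eqn2}.

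Membership $f\in\mathcal{X}^1_{a,\sigma}$ follows at once, since the right-hand side is finite by hypothesis. The local integrability of $\widehat f$ is already part of the assumption on $f$. Setting $a=0$, where the weight is identically $1$, yields \eqref{lem1-eqn1} and $f\in\mathcal{X}^1$.

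There is no real obstacle here. The only point needing care is that Cauchy--Schwarz must be applied to nonnegative measurable functions, which may a priori have infinite integrals. Since the inequality holds in $[0,\infty]$, finiteness of the right-hand side is enough and no approximation argument is required.
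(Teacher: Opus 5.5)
Your proof is correct and is essentially the paper's argument: the same pointwise factorization of the integrand into two square roots, followed by Cauchy--Schwarz in $L^2(\mathbb{R}^3)$. The only difference is cosmetic. The paper writes out the unweighted and weighted cases separately, whereas you obtain \eqref{lem1-eqn1} as the case $a=0$ of the weighted computation; this is legitimate because the argument never uses $a>0$.
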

{\bf Proof.} \pn
$\bullet$ {Proof of \eqref{lem1-eqn1}:} We have
\begin{eqnarray*}
\|f\|_{\mathcal{X}^1} &=&\int_{\mathbb{R}^3}|\xi||\widehat{f}(\xi)|d\xi\\
& =&\int_{\mathbb{R}^3}\left(|\widehat{f}(\xi)|\right)^{\frac{1}
{2}}\left(|\xi|^2|\widehat{f}(\xi)|\right)^{\frac{1}{2}}d\xi. 	
\end{eqnarray*}
From the Cauchy-Schwarz inequality in $L^2(\mathbb{R}^3)$, we obtain
\begin{eqnarray*}
 \|f\|_{\mathcal{X}^1} &\leq & \left(\int_{\mathbb{R}^3}|\widehat{f}(\xi)|
d\xi\right)^{\frac{1}{2}}
\left(\int_{\mathbb{R}^3}|\xi|^2|\widehat{f}(\xi)|d\xi\right)^{\frac{1}{2}}\\
&\leq & \|f\|_{\mathcal{X}^0}^{\frac{1}{2}}\|f\|_{\mathcal{X}^2}^{\frac{1}{2}}.
\end{eqnarray*}
Then $f\in\mathcal{X}^1(\mathbb{R}^3)$ and the desired result is proved.\\
$\bullet$ {Proof of \eqref{lem1-eqn2}:} We have
\begin{eqnarray*}
\|f\|_{\mathcal{X}_{a,\sigma}^1} &=&\int_{\mathbb{R}^3}|\xi| e^{a|\xi|^{\frac{1}{\sigma}}}|\widehat{f}(\xi)|d\xi \\
&=& \int_{\mathbb{R}^3} \left(e^{a|\xi|^{\frac{1}{\sigma}}} |\widehat{f}(\xi)|\right)^{\frac{1}
{2}}\left(|\xi|^2 e^{a|\xi|^{\frac{1}{\sigma}}}|\widehat{f}(\xi)|\right)^
{\frac{1}{2}}d\xi. 	
\end{eqnarray*}
The Cauchy-Schwarz inequality in $L^2(\mathbb{R}^3)$ implies
\begin{eqnarray*}
 \|f\|_{\mathcal{X}_{a,\sigma}^1} &\leq & \left(\int_{\mathbb{R}^3} e^{a|\xi|^{\frac{1}{\sigma}}}|\widehat{f}(\xi)|
d\xi\right)^{\frac{1}{2}}
\left(\int_{\mathbb{R}^3} |\xi|^2 e^{a|\xi|^{\frac{1}{\sigma}}}|\widehat{f}(\xi)|d\xi\right)^{\frac{1}{2}}\\
&\leq & \|f\|_{\mathcal{X}_{a,\sigma}^0}^{\frac{1}{2}}\|f\|_{\mathcal{X}_{a,\sigma}^2}^{\frac{1}{2}}.
\end{eqnarray*}
Then $f\in\mathcal{X}_{a,\sigma}^1(\mathbb{R}^3)$ and the desired result is proved.
\finpr
\begin{lem}\label{lem2}
 Let $f,g\in \mathcal{X}^0(\mathbb{R}^3)$. Then $fg\in \mathcal{X}^0(\mathbb{R}^3)$ and
\begin{eqnarray}
\label{lem2-eqn2}
\|fg\|_{\mathcal{X}^0}
&\leq&
\|f\|_{\mathcal{X}^0}\|g\|_{\mathcal{X}^0}.
\end{eqnarray}
Moreover, if $f\in\mathcal{X}^0(\mathbb{R}^3)$ and $k\geq 2$, then
\begin{eqnarray}
\label{lem3-eqn2}
\|f^k\|_{\mathcal{X}^0}
&\leq&
\|f\|_{\mathcal{X}^0}^k.
\end{eqnarray}
\end{lem}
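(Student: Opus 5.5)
The product estimate \eqref{lem2-eqn2} rests on the fact that the Fourier transform turns products into convolutions, combined with Young's inequality $L^1*L^1\subset L^1$. With the normalization of the Fourier transform used to define the norms (so that $\widehat{fg}=\widehat f*\widehat g$, possibly up to a harmless constant $(2\pi)^{-3}$ which we absorb into the convention), I would write, for $f,g\in\mathcal{X}^0(\mathbb{R}^3)$,
$$
|\widehat{fg}(\xi)|=\Bigl|\int_{\mathbb{R}^3}\widehat f(\xi-\eta)\widehat g(\eta)\,d\eta\Bigr|\leq \int_{\mathbb{R}^3}|\widehat f(\xi-\eta)||\widehat g(\eta)|\,d\eta .
$$
Integrating in $\xi$ and applying Fubini--Tonelli to the nonnegative integrand gives
$$
\|fg\|_{\mathcal{X}^0}\leq\int_{\mathbb{R}^3}|\widehat g(\eta)|\Bigl(\int_{\mathbb{R}^3}|\widehat f(\xi-\eta)|\,d\xi\Bigr)d\eta=\|f\|_{\mathcal{X}^0}\|g\|_{\mathcal{X}^0}.
$$
This also shows $\widehat{fg}\in L^1$, so in particular $fg\in\mathcal{X}^0(\mathbb{R}^3)$.

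The only point needing care is to make sense of the product $fg$ and to justify the convolution identity. Since $\widehat f,\widehat g\in L^1$, both $f$ and $g$ are bounded continuous functions, namely the inverse Fourier transforms of $L^1$ functions. Hence $fg$ is a bounded continuous function and defines a tempered distribution. To identify $\widehat{fg}$ with $\widehat f*\widehat g$, I would compute the inverse Fourier transform of $\widehat f*\widehat g\in L^1$ pointwise. By Fubini, it equals $f(x)g(x)$, and injectivity of the Fourier transform on $\mathcal{S}'$ then gives the identity. This justification is the main (mild) obstacle; everything else is mechanical.

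For \eqref{lem3-eqn2}, I would argue by induction on $k\geq2$. The case $k=2$ is \eqref{lem2-eqn2} with $g=f$. If $\|f^{k}\|_{\mathcal{X}^0}\leq\|f\|_{\mathcal{X}^0}^{k}$, then applying \eqref{lem2-eqn2} to the pair $(f^{k},f)$, which is legitimate since $f^{k}\in\mathcal{X}^0$ by the inductive step, yields
$$
\|f^{k+1}\|_{\mathcal{X}^0}\leq\|f^{k}\|_{\mathcal{X}^0}\|f\|_{\mathcal{X}^0}\leq\|f\|_{\mathcal{X}^0}^{k+1}.
$$
This also shows $f^{k+1}\in\mathcal{X}^0$, which completes the induction.
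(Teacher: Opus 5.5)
Your proof is correct and follows the same route as the paper: the convolution identity $\widehat{fg}=\widehat f*\widehat g$ combined with the bound $L^1*L^1\subset L^1$ (which you derive directly from Tonelli instead of citing Young's inequality), followed by the same induction on $k$ via $f^{k+1}=f\cdot f^k$. Your extra justification that $fg$ is a well-defined bounded continuous function with $\widehat{fg}=\widehat f*\widehat g$ in $\mathcal{S}'$ is a detail the paper omits; the normalization constant is harmless because it is at most $1$ in every standard convention.
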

{\bf Proof.} \pn
By the convolution formula and Young's inequality, we have
\begin{eqnarray*}
\|fg\|_{\mathcal{X}^0}
&=&
\int_{\mathbb{R}^3}|\widehat{fg}(\xi)|\,d\xi
\\
&\leq&
\int_{\mathbb{R}^3}\int_{\mathbb{R}^3}
|\widehat f(\xi-\eta)|\,|\widehat g(\eta)|\,d\eta\,d\xi
\\
&\leq&
\||\widehat f|\ast|\widehat g|\|_{L^1}
\\
&\leq&
\|\widehat f\|_{L^1}\|\widehat g\|_{L^1}
=
\|f\|_{\mathcal{X}^0}\|g\|_{\mathcal{X}^0}.
\end{eqnarray*}
This proves \eqref{lem2-eqn2}.\\
 Next, we prove \eqref{lem3-eqn2} by induction on $k$. For $k=2$, applying \eqref{lem2-eqn2} with $g=f$, we get
$$
\|f^2\|_{\mathcal{X}^0}
\leq
\|f\|_{\mathcal{X}^0}^{2}.
$$

Assume that, for some $k\geq 2$,
$$
\|f^k\|_{\mathcal{X}^0}
\leq
\|f\|_{\mathcal{X}^0}^{k}.
$$
Using \eqref{lem2-eqn2}, we obtain
$$
\|f^{k+1}\|_{\mathcal{X}^0}
=
\|f\,f^k\|_{\mathcal{X}^0}
\leq
\|f\|_{\mathcal{X}^0}\|f^k\|_{\mathcal{X}^0}
\leq
\|f\|_{\mathcal{X}^0}^{k+1}.
$$
Therefore,
$$
\|f^k\|_{\mathcal{X}^0}
\leq
\|f\|_{\mathcal{X}^0}^{k},
\qquad \forall k\geq 2.
$$
\finpr
\begin{lem}\label{lem22}
Let $f, g \in\mathcal{X}_{a,\sigma}^0(\mathbb{R}^3)\cap \mathcal{X}_{\frac{a}{\sigma},\sigma}^0(\mathbb{R}^3)$. Then, $fg \in  \mathcal{X}_{a,\sigma}^0(\mathbb{R}^3)$ and
\begin{eqnarray}\label{lem22-eqn1}
&& \|fg\|_{\mathcal{X}_{a,\sigma}^0} \leq  \|f\|_{\mathcal{X}_{a,\sigma}^0} \|g\|_{\mathcal{X}_{\frac{a}{\sigma},\sigma}^0}+\|f\|_{\mathcal{X}_{\frac{a}{\sigma},\sigma}^0} \|g\|_{\mathcal{X}_{a,\sigma}^0}    \\\label{lem22-eqn2}
&&\|fg\|_{\mathcal{X}_{a,\sigma}^0} \leq \|f\|_{\mathcal{X}_{a,\sigma}^0}\|g\|_{\mathcal{X}_{a,\sigma}^0}\\\nonumber
 \end{eqnarray}
\end{lem}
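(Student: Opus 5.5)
Both inequalities will follow from a pointwise bound on the exponential weight inside the convolution formula $\widehat{fg}=(2\pi)^{-3}\widehat f\ast\widehat g$, used as in Lemma \ref{lem2}, where the normalization constant is absorbed as there. Set $r=1/\sigma\in(0,1)$.

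I begin with \eqref{lem22-eqn2}, which is the easier estimate. Subadditivity of $s\mapsto s^r$ together with the triangle inequality gives
$$
|\xi|^r\le(|\xi-\eta|+|\eta|)^r\le|\xi-\eta|^r+|\eta|^r,
$$
hence $e^{a|\xi|^r}\le e^{a|\xi-\eta|^r}e^{a|\eta|^r}$. I then write
$$
\|fg\|_{\mathcal{X}_{a,\sigma}^0}\le\int_{\mathbb{R}^3}\int_{\mathbb{R}^3}e^{a|\xi|^r}|\widehat f(\xi-\eta)||\widehat g(\eta)|\,d\eta\,d\xi,
$$
insert the weight bound, and apply Fubini (Young's inequality in $L^1$). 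This yields $\|f\|_{\mathcal{X}_{a,\sigma}^0}\|g\|_{\mathcal{X}_{a,\sigma}^0}$.

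For \eqref{lem22-eqn1}, the key step is the refined elementary inequality, which is the version of \eqref{eqprod4} with $\theta=r=1/\sigma$ and $C=1$. Namely, for $x\ge y\ge0$,
$$
(x+y)^r\le x^r+r\,y^r.
$$
To prove it, I apply the mean value theorem to $s\mapsto s^r$ on $[x,x+y]$. This gives $(x+y)^r-x^r=r\zeta^{r-1}y$ for some $\zeta\ge x\ge y$. Since $r-1<0$, we have $\zeta^{r-1}\le y^{r-1}$, so the difference is at most $r y^r$; the case $y=0$ is trivial. Taking $x=\max\{|\xi-\eta|,|\eta|\}$ and $y=\min\{|\xi-\eta|,|\eta|\}$ gives
$$
e^{a|\xi|^r}\le e^{a\max\{|\xi-\eta|,|\eta|\}^r}\,e^{\frac{a}{\sigma}\min\{|\xi-\eta|,|\eta|\}^r}.
$$

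Next I split the $(\xi,\eta)$ integration region into two parts, $\{|\xi-\eta|\ge|\eta|\}$ and $\{|\xi-\eta|<|\eta|\}$.

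On the first part the weight is bounded by $e^{a|\xi-\eta|^r}e^{\frac a\sigma|\eta|^r}$. Enlarging the region to all of $\mathbb{R}^6$ and using Fubini, this contribution is at most $\|f\|_{\mathcal{X}_{a,\sigma}^0}\|g\|_{\mathcal{X}_{\frac a\sigma,\sigma}^0}$.

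On the second part the roles are swapped, and the same argument bounds the contribution by $\|f\|_{\mathcal{X}_{\frac a\sigma,\sigma}^0}\|g\|_{\mathcal{X}_{a,\sigma}^0}$.

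Summing the two contributions gives \eqref{lem22-eqn1}. Finiteness of either right-hand side shows $fg\in\mathcal{X}_{a,\sigma}^0$; the hypotheses guarantee both right-hand sides are finite.

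The only genuinely nontrivial point is the concavity inequality $(x+y)^r\le x^r+ry^r$, and specifically the step $\zeta^{r-1}\le y^{r-1}$. This step is where $r<1$ strictly is used and where the analytic case $r=1$ would fail. Everything else is the routine Young/Fubini computation already used in Lemma \ref{lem2}.
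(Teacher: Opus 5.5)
Your proof is correct and follows the paper's argument: you use the same split of the convolution integral according to $|\xi-\eta|\ge|\eta|$ or $|\xi-\eta|<|\eta|$, the same max/min weight bound followed by Young's inequality, and subadditivity for \eqref{lem22-eqn2}. The one addition is that you actually prove the weight inequality $(x+y)^r\le x^r+r\,y^r$ by the mean value theorem, whereas the paper only quotes it.

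A small correction to your closing remark: the step $\zeta^{r-1}\le y^{r-1}$ needs only $r\le 1$, and at $r=1$ it is an equality. So what is lost at $r=1$ is the gain $\theta=r<1$, not the inequality itself.
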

{\bf Proof.} \pn
$\bullet$ {Proof of \eqref{lem22-eqn1}:} We have
\begin{eqnarray*}
\|fg\|_{\mathcal{X}_{a,\sigma}^0} &=& \int_{\mathbb{R}^3} e^{a|\xi|^{\frac{1}{\sigma}}}|\widehat{fg}(\xi)|d\xi\\
&\leq &  \int_{\mathbb{R}^3} e^{a|\xi|^{\frac{1}{\sigma}}}\left(\int_{\mathbb{R}^3} |\widehat{f}(\xi-\eta)||\widehat{g}(\eta)|d\eta \right)d\xi\\
&\leq & \int_{\mathbb{R}^3} e^{a|\xi|^{\frac{1}{\sigma}}}\left(\int_{|\xi-\eta|>|\eta|} |\widehat{f}(\xi-\eta)||\widehat{g}(\eta)|d\eta +\int_{|\xi-\eta|<|\eta|} |\widehat{f}(\xi-\eta)||\widehat{g}(\eta)|d\eta \right)d\xi.
\end{eqnarray*}
Using the inequality $e^{a|\xi|^{\frac{1}{\sigma}}}\leq e^{a\max(|\xi-\eta|,|\eta|)^{\frac{1}{\sigma} }}e^{\frac{a}{\sigma}\min(|\xi-\eta|,|\eta|)^{\frac{1}{\sigma}}}$, we obtain
\begin{eqnarray*}
\|fg\|_{\mathcal{X}_{a,\sigma}^0}
&\leq &  \int_{\mathbb{R}^3} \left(\int_{|\xi-\eta|>|\eta|} e^{a|\xi-\eta|^{\frac{1}{\sigma}}}|\widehat{f}(\xi-\eta)| e^{\frac{a}{\sigma}|\eta|^{\frac{1}{\sigma}}}|\widehat{g}(\eta)|d\eta + \int_{|\xi-\eta|<|\eta|} e^{\frac{a}{\sigma}|\xi-\eta|^{\frac{1}{\sigma}}} |\widehat{f}(\xi-\eta)|e^{a|\eta|^{\frac{1}{\sigma}}}|\widehat{g}(\eta)|d\eta \right)d\xi \\
&\leq & \|( e^{a|\xi|^{\frac{1}{\sigma}}}|\widehat{f}(\xi)|)\ast  ( e^{\frac{a}{\sigma}|\xi|^{\frac{1}{\sigma}}}|\widehat{g}(\xi)|)\|_{L^1}+ \|( e^{\frac{a}{\sigma}|\xi|^{\frac{1}{\sigma}}}|\widehat{f}(\xi)|)\ast  ( e^{a|\xi|^{\frac{1}{\sigma}}}|\widehat{g}(\xi)|)\|_{L^1}\\
&\leq & \|( e^{a|\xi|^{\frac{1}{\sigma}}}|\widehat{f}(\xi)|)\|_{L^1}  \| ( e^{\frac{a}{\sigma}|\xi|^{\frac{1}{\sigma}}}|\widehat{g}(\xi)|)\|_{L^1}+ \|( e^{\frac{a}{\sigma}|\xi|^{\frac{1}{\sigma}}}|\widehat{f}(\xi)|)\|_{L^1} \| ( e^{a|\xi|^{\frac{1}{\sigma}}}|\widehat{g}(\xi)|)\|_{L^1}\\
&\leq & \|f\|_{\mathcal{X}_{a,\sigma}^0} \|g\|_{\mathcal{X}_{\frac{a}{\sigma},\sigma}^0}+\|f\|_{\mathcal{X}_{\frac{a}{\sigma},\sigma}^0} \|g\|_{\mathcal{X}_{a,\sigma}^0} .
\end{eqnarray*}

$\bullet$ {Proof of \eqref{lem22-eqn2}:} We have
\begin{eqnarray*}
\|fg\|_{\mathcal{X}_{a,\sigma}^0} &=& \int_{\mathbb{R}^3} e^{a|\xi|^{\frac{1}{\sigma}}}|\widehat{fg}(\xi)|d\xi\\
&\leq &  \int_{\mathbb{R}^3} e^{a|\xi|^{\frac{1}{\sigma}}}\left(\int_{\mathbb{R}^3} |\widehat{f}(\xi-\eta)||\widehat{g}(\eta)|d\eta \right)d\xi.	
\end{eqnarray*}
Using the inequality $e^{a|\xi|^{\frac{1}{\sigma}}}\leq e^{a|\xi-\eta|^{\frac{1}{\sigma}}}e^{a|\eta |^{\frac{1}{\sigma}}}$, we obtain
\begin{eqnarray*}
\|fg\|_{\mathcal{X}_{a,\sigma}^0}
&\leq &  \int_{\mathbb{R}^3}\left(\int_{\mathbb{R}^3} e^{a|\xi-\eta|^{\frac{1}{\sigma}}}|
\widehat{f}(\xi-\eta)| e^{a|\eta |^{\frac{1}{\sigma}}}|\widehat{g}(\eta)|d\eta \right)d\xi\\
&\leq & \|( e^{a|\xi|^{\frac{1}{\sigma}}}|\widehat{f}(\xi)|)\ast  ( e^{a|\xi|^{\frac{1}{\sigma}}}|\widehat{g}(\xi)|)\|_{L^1}\\	
&\leq & \|( e^{a|\xi|^{\frac{1}{\sigma}}}|\widehat{f}(\xi)|)\|_{L^1}\|( e^{a|\xi|^{\frac{1}{\sigma}}}|\widehat{g}(\xi)|)\|_{L^1}\\
&\leq & \|f\|_{\mathcal{X}_{a,\sigma}^0}\|g\|_{\mathcal{X}_{a,\sigma}^0}.
\end{eqnarray*}
\finpr
\begin{lem}\label{lem3} (\cite{MR2024})
Assume that $ f, g\in\mathcal{X}_{a,\sigma}^{s+1}(\mathbb{R})\cap \mathcal{X}_{\frac{a}{\sigma},\sigma}^{0}(\mathbb{R})$ with $a\geq 0, \sigma\geq 1$ and $s\geq -1$. Then $fg\in\mathcal{X}_{a,\sigma}^{s+1}(\mathbb{R})$. Moreover, the inequality below holds:
$$
\|fg\|_{\mathcal{X}_{a,\sigma}^{s+1}(\mathbb{R})}\leq C_s\Big[\|f\|_{\mathcal{X}_{\frac{a}{\sigma},\sigma}^{0}}\|g\|_{\mathcal{X}_{a,\sigma}^{s+1}}+\|f\|_{\mathcal{X}_{a,\sigma}^{s+1}}\|g\|_{\mathcal{X}_{\frac{a}{\sigma},\sigma}^{0}}\Big].
$$
 \end{lem}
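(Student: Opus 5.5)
The plan is to argue on the Fourier side, exactly as in the proof of Lemma \ref{lem22}, with the polynomial factor $|\xi|^{s+1}$ carried along. We start from
$$
|\widehat{fg}(\xi)|\leq \int_{\mathbb{R}}|\widehat f(\xi-\eta)||\widehat g(\eta)|\,d\eta ,
$$
and split the $\eta$-integral into the regions $\{|\xi-\eta|\geq|\eta|\}$ and $\{|\xi-\eta|<|\eta|\}$. On each region two weights have to be distributed.

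For the polynomial weight, since $s+1\geq 0$, we have on the first region $|\xi|\leq|\xi-\eta|+|\eta|\leq 2|\xi-\eta|$, hence
$$
|\xi|^{s+1}\leq 2^{s+1}|\xi-\eta|^{s+1}.
$$
Symmetrically, $|\xi|^{s+1}\leq 2^{s+1}|\eta|^{s+1}$ on the second region. When $s+1=0$ this is trivial.

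For the exponential weight we use the inequality already invoked in Lemma \ref{lem22}:
$$
e^{a|\xi|^{1/\sigma}}\leq e^{a\max(|\xi-\eta|,|\eta|)^{1/\sigma}}\,e^{\frac a\sigma\min(|\xi-\eta|,|\eta|)^{1/\sigma}}.
$$
This is the one point needing justification, and the main obstacle, especially for the borderline $\sigma=1$. Setting $M=\max$, $m=\min$ and $r=1/\sigma\in(0,1]$, the mean value theorem gives
$$
(M+m)^r-M^r\leq r M^{r-1}m\leq r\,m^{r}
$$
because $m\leq M$ and $r-1\leq 0$. Combined with $|\xi|\leq M+m$, this yields the claim. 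In particular it holds for $\sigma=1$, with equality-type subadditivity.

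Putting the two weights together, on the first region the integrand is bounded by
$$
2^{s+1}\Big(|\xi-\eta|^{s+1}e^{a|\xi-\eta|^{1/\sigma}}|\widehat f(\xi-\eta)|\Big)\Big(e^{\frac a\sigma|\eta|^{1/\sigma}}|\widehat g(\eta)|\Big),
$$
and on the second region by the symmetric expression with the roles of $f$ and $g$ exchanged. Extending each region integral to all of $\mathbb{R}$, integrating in $\xi$, and applying Young's inequality $\|F\ast G\|_{L^1}\leq\|F\|_{L^1}\|G\|_{L^1}$ gives the stated bound with $C_s=2^{s+1}$. Finiteness of the right-hand side under the hypotheses shows $fg\in\mathcal{X}^{s+1}_{a,\sigma}(\mathbb{R})$.

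The dimension plays no role, so the same argument works verbatim on $\mathbb{R}^3$. The case $a=0$ is included, since the exponential weights then reduce to $1$.
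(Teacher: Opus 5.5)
Your proof is correct. The paper gives no proof of this lemma (it is quoted from the literature), but your argument is exactly the paper's proof of Lemma~\ref{lem22}: the same splitting into $\{|\xi-\eta|\ge|\eta|\}$ and its complement, the same max/min exponential inequality, and Young's inequality, with the extra weight absorbed via $|\xi|^{s+1}\le 2^{s+1}\max(|\xi-\eta|,|\eta|)^{s+1}$.

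Your mean-value-theorem justification of $|\xi|^{1/\sigma}\le \max(|\xi-\eta|,|\eta|)^{1/\sigma}+\frac{1}{\sigma}\min(|\xi-\eta|,|\eta|)^{1/\sigma}$ is valid for all $\sigma\ge 1$; the paper uses this inequality without proof. Your constant $C_s=2^{s+1}$ is consistent with the factor $4$ the paper later uses for $s=0$.
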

\begin{lem}\label{lem33}
Let $f\in \mathcal{X}_{a,\sigma}^0(\mathbb{R}^3)\cap \mathcal{X}_{\frac{a}{\sigma},\sigma}^0(\mathbb{R}^3)$ and let $k\geq 2$. Then
\begin{eqnarray}
\label{lem33-eqn1}
\|f^k\|_{\mathcal{X}_{a,\sigma}^0}
&\leq&
2^{k-1}
\|f\|_{\mathcal{X}_{a,\sigma}^0}
\|f\|_{\mathcal{X}_{\frac{a}{\sigma},\sigma}^0}^{k-1},
\\
\label{lem33-eqn2}
\|f^k\|_{\mathcal{X}_{a,\sigma}^0}
&\leq&
2^{k-1}
\|f\|_{\mathcal{X}_{a,\sigma}^0}^{k}.
\end{eqnarray}
\end{lem}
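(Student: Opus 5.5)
The plan is to argue by induction on $k$, following the proof of \eqref{lem3-eqn2} in Lemma \ref{lem2} but replacing the plain product estimate by the two estimates of Lemma \ref{lem22}. One preliminary observation is needed first. Since $\sigma>1$, we have $\frac{a}{\sigma}\le a$, so $e^{\frac{a}{\sigma}|\xi|^{1/\sigma}}\le e^{a|\xi|^{1/\sigma}}$. This gives
$$
\|g\|_{\mathcal{X}_{\frac{a}{\sigma},\sigma}^0}\le \|g\|_{\mathcal{X}_{a,\sigma}^0}.
$$
Hence $\mathcal{X}_{a,\sigma}^0(\mathbb{R}^3)\subset\mathcal{X}_{\frac{a}{\sigma},\sigma}^0(\mathbb{R}^3)$. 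This inclusion shows that \eqref{lem33-eqn2} follows immediately from \eqref{lem33-eqn1}: bound $\|f\|_{\mathcal{X}_{\frac{a}{\sigma},\sigma}^0}^{k-1}$ by $\|f\|_{\mathcal{X}_{a,\sigma}^0}^{k-1}$. So the real content is \eqref{lem33-eqn1}.

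For \eqref{lem33-eqn1} we need, alongside the induction, control of powers in the weaker space. Lemma \ref{lem22}, estimate \eqref{lem22-eqn2}, applied with $a$ replaced by $\frac{a}{\sigma}$, gives
$$
\|f^{j}\|_{\mathcal{X}_{\frac{a}{\sigma},\sigma}^0}\le \|f\|_{\mathcal{X}_{\frac{a}{\sigma},\sigma}^0}^{j}\quad\text{for every } j\ge1.
$$
This follows by a straightforward induction, exactly as in Lemma \ref{lem2}. It also shows that every power $f^j$ lies in both spaces, so Lemma \ref{lem22} can be applied to the pairs $(f,f^{k})$.

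The induction on $k$ goes as follows.

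\emph{Base case $k=2$.} Estimate \eqref{lem22-eqn1} with $g=f$ gives
$$
\|f^2\|_{\mathcal{X}_{a,\sigma}^0}\le 2\|f\|_{\mathcal{X}_{a,\sigma}^0}\|f\|_{\mathcal{X}_{\frac{a}{\sigma},\sigma}^0}.
$$

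\emph{Inductive step.} Assume the bound holds for $k$, and write $f^{k+1}=f\cdot f^k$. By \eqref{lem22-eqn1},
$$
\|f^{k+1}\|_{\mathcal{X}_{a,\sigma}^0}\le \|f\|_{\mathcal{X}_{a,\sigma}^0}\|f^k\|_{\mathcal{X}_{\frac{a}{\sigma},\sigma}^0}+\|f\|_{\mathcal{X}_{\frac{a}{\sigma},\sigma}^0}\|f^k\|_{\mathcal{X}_{a,\sigma}^0}.
$$
The weak-space power bound controls the first term by $\|f\|_{\mathcal{X}_{a,\sigma}^0}\|f\|_{\mathcal{X}_{\frac{a}{\sigma},\sigma}^0}^{k}$. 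The induction hypothesis controls the second term by $2^{k-1}\|f\|_{\mathcal{X}_{a,\sigma}^0}\|f\|_{\mathcal{X}_{\frac{a}{\sigma},\sigma}^0}^{k}$. Adding them gives the constant $(1+2^{k-1})\le 2^{k}$, which is what the claim requires at $k+1$.

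The main point needing care is the bookkeeping. One must check that each application of Lemma \ref{lem22} is legitimate, i.e.\ that $f^k$ belongs to both $\mathcal{X}_{a,\sigma}^0$ and $\mathcal{X}_{\frac{a}{\sigma},\sigma}^0$. This holds because $f^k\in\mathcal{X}_{a,\sigma}^0$ by the inductive step itself, and $f^k\in\mathcal{X}_{\frac{a}{\sigma},\sigma}^0$ by the weak-space power bound. Beyond this, the constant $1+2^{k-1}\le 2^k$ is elementary and presents no obstacle.
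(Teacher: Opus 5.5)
Your proposal is correct and is essentially the paper's own proof: induction on $k$ using \eqref{lem22-eqn1} for $f\cdot f^k$, the power bound $\|f^k\|_{\mathcal{X}_{\frac{a}{\sigma},\sigma}^0}\le\|f\|_{\mathcal{X}_{\frac{a}{\sigma},\sigma}^0}^k$ from \eqref{lem22-eqn2} with $a$ replaced by $\frac{a}{\sigma}$, the constant bound $1+2^{k-1}\le 2^k$, and finally \eqref{lem33-eqn2} from the embedding $\|f\|_{\mathcal{X}_{\frac{a}{\sigma},\sigma}^0}\le\|f\|_{\mathcal{X}_{a,\sigma}^0}$. Your explicit check that every power $f^j$ lies in both weighted spaces, so that Lemma~\ref{lem22} applies at each step, is bookkeeping that the paper leaves implicit.
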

{\bf Proof.}\pn
We first prove \eqref{lem33-eqn1} by induction on $k$.
For $k=2$, the result follows directly from \eqref{lem22-eqn1}.
Assume that, for some $k\geq 2$,
$$
\|f^k\|_{\mathcal{X}_{a,\sigma}^0}
\leq
2^{k-1}
\|f\|_{\mathcal{X}_{a,\sigma}^0}
\|f\|_{\mathcal{X}_{\frac{a}{\sigma},\sigma}^0}^{k-1}.
$$
Using \eqref{lem22-eqn1}, we get
\begin{eqnarray*}
\|f^{k+1}\|_{\mathcal{X}_{a,\sigma}^0}
&=&
\|f\,f^k\|_{\mathcal{X}_{a,\sigma}^0}
\\
&\leq&
\|f\|_{\mathcal{X}_{a,\sigma}^0}
\|f^k\|_{\mathcal{X}_{\frac{a}{\sigma},\sigma}^0}
+
\|f\|_{\mathcal{X}_{\frac{a}{\sigma},\sigma}^0}
\|f^k\|_{\mathcal{X}_{a,\sigma}^0}.
\end{eqnarray*}
By applying \eqref{lem22-eqn2} with $a$ replaced by $\frac{a}{\sigma}$, together with the
corresponding power estimate, we obtain
$$
\|f^k\|_{\mathcal{X}_{\frac{a}{\sigma},\sigma}^0}\leq\|f\|_{\mathcal{X}_{\frac{a}{\sigma},
\sigma}^0}^{k}.
$$
Moreover, by the induction hypothesis,
$$
\|f^k\|_{\mathcal{X}_{a,\sigma}^0} \leq 2^{k-1}\|f\|_{\mathcal{X}_{a,\sigma}^0}
\|f\|_{\mathcal{X}_{\frac{a}{\sigma},\sigma}^0}^{k-1}.
$$
Therefore,
$$
\begin{aligned}
\|f^{k+1}\|_{\mathcal{X}_{a,\sigma}^0}
&\leq \|f\|_{\mathcal{X}_{a,\sigma}^0}\|f^k\|_{\mathcal{X}_{\frac{a}{\sigma},\sigma}^0}
+
\|f\|_{\mathcal{X}_{\frac{a}{\sigma},\sigma}^0}\|f^k\|_{\mathcal{X}_{a,\sigma}^0}\\
&\leq\bigl(1+2^{k-1}\bigr)\|f\|_{\mathcal{X}_{a,\sigma}^0}\|f\|_{\mathcal{X}_{\frac{a}{\sigma},
\sigma}^0}^{k}\\
&\leq 2^k\|f\|_{\mathcal{X}_{a,\sigma}^0}\|f\|_{\mathcal{X}_{\frac{a}{\sigma},\sigma}^0}^{k}.
\end{aligned}
$$
Thus, \eqref{lem33-eqn1} follows by induction.\\
Now we prove \eqref{lem33-eqn2}. Since
$$
\|f\|_{\mathcal{X}_{\frac{a}{\sigma},\sigma}^0}
\leq
\|f\|_{\mathcal{X}_{a,\sigma}^0},
$$
we deduce from \eqref{lem33-eqn1} that
$$
\|f^k\|_{\mathcal{X}_{a,\sigma}^0}
\leq
2^{k-1}
\|f\|_{\mathcal{X}_{a,\sigma}^0}^{k}.
$$
Hence \eqref{lem33-eqn2} holds. This completes the proof.
\finpr
\begin{lem}\label{lem4} (\cite{BJ2026})
Let $f,g\in C_T(\mathcal{X}^0(\mathbb{R}^3))\cap L^1_T(\mathcal{X}^2(\mathbb{R}^3))$, then
$$
\int_0^t e^{(t-\tau)\Delta}\mathbb{P}\left(f\cdot \nabla g\right)d\tau \in C_T(\mathcal{X}^0(\mathbb{R}^3))\cap L^1_T(\mathcal{X}^2(\mathbb{R}^3)).
$$
Precisely, we obtain the following estimates
\begin{eqnarray}\label{lem4-eqn1}
\Big \|\int_0^t e^{(t-\tau)\Delta}\mathbb{P}\left(f\cdot \nabla g\right)d\tau \Big \|_{L^\infty_T(\mathcal{X}^0)}&\leq & T^{\frac{1}{2}}\,\|f\|_{L_T^{\infty}(\mathcal{X}^0)}\|g\|_{L_T^{\infty}
(\mathcal{X}^0)}^{\frac{1}{2}} \|g\|_{L_T^1(\mathcal{X}^2)}^{\frac{1}{2}}\\\label{lem4-eqn2}
\Big \|\int_0^t e^{(t-\tau)\Delta}\mathbb{P}\left(f\cdot \nabla g\right)d\tau \Big \|_{L^1_T(\mathcal{X}^2)}&\leq & T^{\frac{1}{2}}\,\|f\|_{L_T^{\infty}(\mathcal{X}^0)}\|g\|_{L_T^{\infty}(\mathcal{X}^0)}
^{\frac{1}{2}} \|g\|_{L_T^1(\mathcal{X}^2)}^{\frac{1}{2}}.
\end{eqnarray}
\end{lem}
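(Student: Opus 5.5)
The plan is to work entirely on the Fourier side, exactly as in the proof of Lemma \ref{lem2}. The key is a pointwise bound on the symbol of the bilinear Duhamel operator. Write
$$
\mathcal{F}\Big(\int_0^t e^{(t-\tau)\Delta}\mathbb{P}(f\cdot\nabla g)\,d\tau\Big)(\xi)=\int_0^t e^{-(t-\tau)|\xi|^2}\,\widehat{\mathbb{P}}(\xi)\,\mathcal{F}(f\cdot\nabla g)(\tau,\xi)\,d\tau .
$$
The symbol of $\mathbb{P}$ is a matrix of norm at most $1$. Hence
$$
|\mathcal{F}(\cdots)(\xi)|\leq \int_0^t e^{-(t-\tau)|\xi|^2}|\mathcal{F}(f\cdot\nabla g)(\tau,\xi)|\,d\tau .
$$
Using $|\mathcal{F}(f\cdot\nabla g)(\xi)|\leq \int |\widehat f(\xi-\eta)|\,|\eta|\,|\widehat g(\eta)|\,d\eta$ and Young's inequality, as in Lemma \ref{lem2}, we get
$$
\|f\cdot\nabla g(\tau)\|_{\mathcal{X}^0}\leq \|f(\tau)\|_{\mathcal{X}^0}\|g(\tau)\|_{\mathcal{X}^1}.
$$
(Equivalently, one could write $f\cdot\nabla g=\operatorname{div}(f\otimes g)$ when $f$ is divergence free, but the direct form suffices here.)

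For \eqref{lem4-eqn1}, bound $e^{-(t-\tau)|\xi|^2}\leq 1$ and integrate in $\xi$. This gives
$$
\|\cdots(t)\|_{\mathcal{X}^0}\leq\int_0^t\|f\|_{\mathcal{X}^0}\|g\|_{\mathcal{X}^1}\,d\tau\leq \|f\|_{L^\infty_T(\mathcal{X}^0)}\int_0^T\|g\|_{\mathcal{X}^0}^{1/2}\|g\|_{\mathcal{X}^2}^{1/2}\,d\tau ,
$$
where the second step uses Lemma \ref{lem1}. Cauchy--Schwarz in time then bounds the last integral by $T^{1/2}\|g\|_{L^\infty_T(\mathcal{X}^0)}^{1/2}\|g\|_{L^1_T(\mathcal{X}^2)}^{1/2}$, which is exactly \eqref{lem4-eqn1}.

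For \eqref{lem4-eqn2}, multiply the pointwise bound by $|\xi|^2$ and integrate over $t\in(0,T)$ and $\xi$. Fubini in $(t,\tau)$ together with $\int_\tau^T|\xi|^2e^{-(t-\tau)|\xi|^2}dt\leq 1$ gives
$$
\int_0^T\|\cdots(t)\|_{\mathcal{X}^2}\,dt\leq\int_0^T\|f\cdot\nabla g(\tau)\|_{\mathcal{X}^0}\,d\tau .
$$
This is the same quantity as before, so the same bound follows.

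Continuity in time with values in $\mathcal{X}^0$ comes from dominated convergence on the Fourier side. The integrand $|\widehat{\mathcal{F}(f\cdot\nabla g)}|$ lies in $L^1_T L^1_\xi$ by the estimate above, and $t\mapsto e^{-(t-\tau)|\xi|^2}\mathbf 1_{\tau<t}$ is continuous away from $\tau=t$, a set of measure zero. The only mildly delicate point is the Young/product step with the gradient: the factor $|\eta|$ must land on $g$ alone. That is automatic in the form $f\cdot\nabla g$, so I expect no real obstacle; the rest is routine bookkeeping.
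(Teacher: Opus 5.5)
Your proof is correct and follows the paper's route. Lemma \ref{lem4} is only quoted from earlier work, but its weighted analogue, Lemma \ref{lem44}, is proved by exactly your steps: drop the heat factor, use $\|f\cdot\nabla g\|_{\mathcal{X}^0}\le\|f\|_{\mathcal{X}^0}\|g\|_{\mathcal{X}^1}$, interpolate via Lemma \ref{lem1}, apply Cauchy--Schwarz in time, and use Fubini with $\int_\tau^T|\xi|^2e^{-(t-\tau)|\xi|^2}\,dt\le 1$ for the $L^1_T(\mathcal{X}^2)$ bound. Your dominated-convergence argument for continuity in time supplies a detail the paper only asserts; note that $\widehat{\mathcal{F}(\cdot)}$ there should read $\mathcal{F}(f\cdot\nabla g)$.
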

\begin{lem}\label{lem44}
Let $f,g\in C_T(\mathcal{X}_{a,\sigma}^0(\mathbb{R}^3))\cap L^1_T(\mathcal{X}_{a,\sigma}^2(\mathbb{R}^3))$, then
$$
\int_0^t e^{(t-\tau)\Delta}\mathbb{P}\left(f\cdot \nabla g\right)d\tau \in C_T(\mathcal{X}_{a,\sigma}^0(\mathbb{R}^3))\cap L^1_T(\mathcal{X}_{a,\sigma}^2(\mathbb{R}^3)).
$$
Precisely, we obtain the following estimates
\begin{eqnarray} \label{lem44-eqn1}
\Big\|\int_0^t e^{(t-\tau)\Delta}\mathbb{P} (f\cdot\nabla g)d\tau \Big\|_{L_T^{\infty}(\mathcal{X}_{a,\sigma}^0)} &\leq &  T^{\frac{1}{2}}\,\|f\|_{L_T^{\infty}(\mathcal{X}_{a,\sigma}^0)}\|g\|_{L_T^{\infty}(\mathcal{X}_{a,\sigma}^0)}
^{\frac{1}{2}} \|g\|_{L_T^1(\mathcal{X}_{a,\sigma}^2)}^{\frac{1}{2}}\\\label{lem44-eqn2}
 \Big \|\int_0^t e^{(t-\tau)\Delta}\mathbb{P}\left(f\cdot \nabla g\right)d\tau \Big \|_{L^1_T(\mathcal{X}_{a,\sigma }^2)}&\leq & T^{\frac{1}{2}}\,\|f\|_{L_T^{\infty}(\mathcal{X}_{a,\sigma}^0)}\|g\|_{L_T^{\infty}(\mathcal{X}_{a,\sigma}^0)}
^{\frac{1}{2}} \|g\|_{L_T^1(\mathcal{X}_{a,\sigma}^2)}^{\frac{1}{2}}.
 \end{eqnarray}
\end{lem}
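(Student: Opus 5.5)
The plan is to work entirely on the Fourier side, as in Lemma \ref{lem4}, with the weight $e^{a|\xi|^{1/\sigma}}$ carried along. Continuity in time and the $L^1_T(\mathcal{X}^2_{a,\sigma})$ membership will follow by standard dominated convergence once the bounds are in place, so the estimates are the main content.

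Set $w(t)=\int_0^t e^{(t-\tau)\Delta}\mathbb{P}(f\cdot\nabla g)\,d\tau$. Since $\mathbb{P}$ is a Fourier multiplier of norm at most $1$ and $\operatorname{div}$ structure is not needed, we have
$$|\widehat{w}(t,\xi)|\leq \int_0^t e^{-(t-\tau)|\xi|^2}\,|\widehat{f\cdot\nabla g}(\tau,\xi)|\,d\tau .$$
Multiplying by $e^{a|\xi|^{1/\sigma}}$ and integrating in $\xi$, and using $e^{-(t-\tau)|\xi|^2}\le 1$, gives
$$\|w(t)\|_{\mathcal{X}^0_{a,\sigma}}\le \int_0^t \|f\cdot\nabla g(\tau)\|_{\mathcal{X}^0_{a,\sigma}}\,d\tau .$$
For the $L^1_T(\mathcal{X}^2_{a,\sigma})$ bound, integrate $|\xi|^2 e^{-(t-\tau)|\xi|^2}$ in $t\in[\tau,T]$, which yields at most $1$. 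Fubini then gives the same right-hand side, namely $\int_0^T\|f\cdot\nabla g\|_{\mathcal{X}^0_{a,\sigma}}d\tau$. This reduces both \eqref{lem44-eqn1} and \eqref{lem44-eqn2} to a single bound on this time integral.

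For the product, apply the subadditivity $e^{a|\xi|^{1/\sigma}}\le e^{a|\xi-\eta|^{1/\sigma}}e^{a|\eta|^{1/\sigma}}$ exactly as in \eqref{lem22-eqn2}, with $\widehat{\partial_j g}(\eta)=i\eta_j\widehat g(\eta)$. This gives
$$\|f\cdot\nabla g\|_{\mathcal{X}^0_{a,\sigma}}\le \|f\|_{\mathcal{X}^0_{a,\sigma}}\|g\|_{\mathcal{X}^1_{a,\sigma}}$$
(up to a harmless factor from summing over components, which the paper's normalization absorbs). Then \eqref{lem1-eqn2} gives
$$\|g\|_{\mathcal{X}^1_{a,\sigma}}\le\|g\|^{1/2}_{\mathcal{X}^0_{a,\sigma}}\|g\|^{1/2}_{\mathcal{X}^2_{a,\sigma}} .$$
Integrating in time and applying Cauchy--Schwarz in $\tau$,
$$\int_0^T\|g\|^{1/2}_{\mathcal{X}^2_{a,\sigma}}d\tau\le T^{1/2}\|g\|^{1/2}_{L^1_T(\mathcal{X}^2_{a,\sigma})},$$
while $\|f\|_{\mathcal{X}^0_{a,\sigma}}$ and $\|g\|^{1/2}_{\mathcal{X}^0_{a,\sigma}}$ are bounded by their $L^\infty_T$ norms. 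This produces exactly $T^{1/2}\|f\|_{L^\infty_T}\|g\|_{L^\infty_T}^{1/2}\|g\|_{L^1_T(\mathcal{X}^2)}^{1/2}$ in the weighted spaces.

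The main (mild) obstacle is the continuity $w\in C_T(\mathcal{X}^0_{a,\sigma})$. To handle it, write $\widehat w(t_2)-\widehat w(t_1)$ as the integral over $[t_1,t_2]$ plus $\int_0^{t_1}(e^{-(t_2-\tau)|\xi|^2}-e^{-(t_1-\tau)|\xi|^2})\cdots\,d\tau$. The first piece is bounded by $\int_{t_1}^{t_2}\|f\cdot\nabla g\|_{\mathcal{X}^0_{a,\sigma}}$, which tends to $0$ since this function is in $L^1_T$ by the estimate above. The second tends to $0$ by dominated convergence, with dominating function $2e^{a|\xi|^{1/\sigma}}|\widehat{f\cdot\nabla g}(\tau,\xi)|\in L^1(d\tau\,d\xi)$.
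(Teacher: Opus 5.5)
Your proposal is correct and follows essentially the same route as the paper. Both bounds are reduced to $\int_0^T\|f\cdot\nabla g\|_{\mathcal{X}^0_{a,\sigma}}\,d\tau$ via the pointwise heat-kernel bound and Fubini (integrating $|\xi|^2e^{-(t-\tau)|\xi|^2}$ over $t\in[\tau,T]$), and then closed with the weighted product estimate, the interpolation inequality \eqref{lem1-eqn2}, and Cauchy--Schwarz in time. Your hedge about a component factor is unnecessary, since with the Euclidean norm on $\mathbb{C}^3$ one has $|(\widehat f(\xi-\eta)\cdot\eta)\,\widehat g(\eta)|\le|\widehat f(\xi-\eta)|\,|\eta|\,|\widehat g(\eta)|$, and your dominated-convergence argument for continuity in $\mathcal{X}^0_{a,\sigma}$ usefully fills in a step the paper only asserts.
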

{\bf Proof.} Let $f,g\in C_T(\mathcal{X}_{a,\sigma}^0(\mathbb{R}^3))\cap L^1_T(\mathcal{X}_{a,\sigma}^2(\mathbb{R}^3))$. From  the inequality \eqref{lem1-eqn2}, we
have
\begin{eqnarray*}
\Big \|\int_0^t e^{(t-\tau)\Delta}\mathbb{P}\left(f\cdot \nabla g\right)d\tau \Big\|_{\mathcal{X}_{a,\sigma}^0}&\leq & \displaystyle\int_0^t\|f\cdot \nabla g\|_{\mathcal{X}_{a,\sigma}^0}d\tau\\
&\leq &	\displaystyle\int_0^t\|f\|_{\mathcal{X}_{a,\sigma}^0}\|g\|_{\mathcal{X}_{a,\sigma}^1}d \tau \\
&\leq &	\displaystyle\int_0^t \|f\|_{\mathcal{X}_{a,\sigma}^0} \|g\|_{\mathcal{X}_{a,\sigma}^0}^{\frac{1}{2}}\|g\|_{\mathcal{X}_{a,\sigma}^2}^{\frac{1}{2}}d\tau\\
&\leq &	\displaystyle\|f\|_{L_T^{\infty}(\mathcal{X}_{a,\sigma}^0)}\|g\|_{L_T^{\infty}(\mathcal{X}_{a,\sigma}^0)}^{\frac{1}{2}}\int_0^t\|g\|
_{\mathcal{X}_{a,\sigma}^2}^{\frac{1}{2}}d\tau.
\end{eqnarray*}
Therefore
\begin{eqnarray*}
\Big \|\int_0^t e^{(t-\tau)\Delta}\mathbb{P}\left(f\cdot \nabla g\right)d\tau \Big \|_{L_T^{\infty}(\mathcal{X}_{a,\sigma}^0)}&\leq & T^{\frac{1}{2}}\,\|f\|_{L_T^{\infty}(\mathcal{X}_{a,\sigma}^0)}\|g\|_{L_T^{\infty}
(\mathcal{X}_{a,\sigma }^0)}^{\frac{1}{2}} \|g\|_{L_T^1(\mathcal{X}_{a,\sigma}^2)}^{\frac{1}{2}},
\end{eqnarray*}
and
$$
\int_0^t e^{(t-\tau)\Delta}\mathbb{P}\left(f\cdot \nabla g\right)d\tau \in  C_T(\mathcal{X}_{a,\sigma}^0(\mathbb{R}^3)).
$$
On the other hand, we have
\begin{eqnarray*}
\Big \| \int_0^t e^{(t-\tau)\Delta}\mathbb{P}\left(f\cdot \nabla g\right)d\tau\Big \|_{L^1_T(\mathcal{X}_{a,\sigma}^2)}&\leq &  \int_0^T\int_{\rb^3}|\xi|^2\int_0^te^{-(t-\tau)|\xi|^2} e^{a|\xi|^{\frac{1}{\sigma}}}|\mathcal F(f\cdot\nabla g)(\tau,\xi)|d\tau d\xi dt\\ 	
&\leq &  \int_{\mathbb{R}^3}|\xi|^2\Big(\int_0^T\int_0^te^{-(t-\tau)|\xi|^2} e^{a|\xi|^{\frac{1}{\sigma}}}|\mathcal F(f\cdot\nabla g)
(\tau,\xi)|d\tau  dt\Big)d\xi.
\end{eqnarray*}
As $\{(\tau,t)\in[0,T]^2:\;0\leq \tau\leq t\}=\{(\tau,t)\in[0,T]^2:\; \tau\leq t\leq T\}$, we
obtain
\begin{eqnarray*}
\Big\|\int_0^t e^{(t-\tau)\Delta}\mathbb{P}\left(f\cdot \nabla g\right)d\tau\Big \|_{L^1_T(\mathcal{X}_{a,\sigma}^2)}
&\leq & \int_{\rb^3}|\xi|^2\Big(\int_0^T[\int_\tau^Te^{-(t-\tau)|\xi|
^2}dt] e^{a|\xi|^{\frac{1}{\sigma}}}|\mathcal F(f\cdot\nabla g)(\tau,\xi)|d\tau\Big)d\xi\\
&\leq &  \int_{\rb^3}|\xi|^2\Big(\int_0^T[\frac{1-e^{-(T-\tau)|\xi|^2}}{|\xi|^2}] e^{a|\xi|^{\frac{1}{\sigma}}}|\mathcal F(f\cdot\nabla g)(\tau,\xi)|d\tau\Big)d\xi\\
&\leq &  \int_{\rb^3}\Big(\int_0^T e^{a|\xi|^{\frac{1}{\sigma}}}|\mathcal F(f\cdot\nabla g)(\tau,\xi)|d\tau\Big)d\xi\\
&\leq &  \int_0^T\int_{\rb^3} e^{a|\xi|^{\frac{1}{\sigma}}}|\mathcal F(f\cdot\nabla g)(\tau,\xi)|d\xi d\tau\\
&\leq &  \int_0^T\|(f\cdot\nabla g)(\tau)\|_{\mathcal{X}_{a,\sigma}^0} d\tau.
\end{eqnarray*}
Similarly, using the inequality \eqref{lem1-eqn2} and proceeding as in the previous estimate, we obtain
\begin{eqnarray*}
\Big \| \int_0^t e^{(t-\tau)\Delta}\mathbb{P}\left(f\cdot \nabla g\right)d\tau\Big \|_{L^1_T(\mathcal{X}_{a,\sigma}^2)}
&\leq & T^{\frac{1}{2}}\,\|f\|_{L_T^{\infty}(\mathcal{X}_{a,\sigma}^0)}\|g\|_{L_T^{\infty}(\mathcal{X}_{a,\sigma}^0)}^{\frac{1}{2}}\|g\|_{L_T^1(\mathcal{X}_{a,\sigma}^2)}^{\frac{1}{2}},
\end{eqnarray*}
and
$$
\int_0^t e^{(t-\tau)\Delta}\mathbb{P}\left(f\cdot \nabla g\right)d\tau\in C_T(\mathcal{X}_{a,\sigma}^0(\mathbb{R}^3))\cap L^1_T(\mathcal{X}_{a,\sigma}^2(\mathbb{R}^3))
$$
which completes the proof.
\finpr
\begin{rem}
In what follows, $e_0$ denotes an arbitrary vector of the canonical basis $\{e_1,e_2,e_3\}$ of
$\mathbb{R}^3$. In particular, $|e_0|=1$, so multiplication by $e_0$ does not affect the
Fourier-space norms appearing below.
\end{rem}
\begin{lem}\label{lem5}
Let $f_1, f_2, f_3\in C_T(\mathcal{X}^0(\mathbb{R}^3))\cap L^1_T(\mathcal{X}^2(\mathbb{R}^3))$. Then, for $p,q,r\in\mathbb{N}_0$, we have
$$
\int_0^t e^{(t-\tau)\Delta}\mathbb{P}( f_1^p\,f_2^q\,f_3^re_0)d\tau\in  C_T(\mathcal{X}^0(\mathbb{R}^3))\cap L^1_T(\mathcal{X}^2(\mathbb{R}^3)),
$$
Precisely, we obtain the following estimates
 \begin{eqnarray}\label{lem5-eqn1}
&&\Big\|\int_0^t e^{(t-\tau)\Delta}\mathbb{P} ( f_1^p\,f_2^q\,f_3^re_0)d\tau \Big\|_{L_T^{\infty}(\mathcal{X}^0)} \leq T \|f_1\|_{L_T^{\infty}(\mathcal{X}^0)}^p\|f_2\|_{L_T^{\infty}(\mathcal{X}^0)}^q\|f_3\|_{L_T^{\infty}(\mathcal{X}^0)}^r\\\label{lem5-eqn2}
&& \Big\|\int_0^t e^{(t-\tau)\Delta}\mathbb{P}(f_1^p\,f_2^q\,f_3^re_0)d\tau \Big\|_{L_T^1(\mathcal{X}^2)} \leq T \|f_1\|_{L_T^{\infty}(\mathcal{X}^0)}^p\|f_2\|_{L_T^{\infty}(\mathcal{X}^0)}^q\|f_3\|_{L_T^{\infty}(\mathcal{X}^0)}^r
 \end{eqnarray}
Particularly: For $f^k\in C_T(\mathcal{X}^0(\mathbb{R}^3))\cap L^1_T(\mathcal{X}^2(\mathbb{R}^3))$ with $k\geq 2$, we have
$$
\int_0^t e^{(t-\tau)\Delta}\mathbb{P}( f^ke_0)d\tau\in   C_T(\mathcal{X}^0(\mathbb{R}^3))\cap L^1_T(\mathcal{X}^2(\mathbb{R}^3)),
$$
and the following estimates hold
 \begin{eqnarray}\label{lem5-eqn3}
&&\Big\|\int_0^t e^{(t-\tau)\Delta}\mathbb{P} (f^ke_0)d\tau \Big\|_{L_T^{\infty}(\mathcal{X}^0)} \leq T \|f\|_{L_T^{\infty}(\mathcal{X}^0)}^k\\\label{lem5-eqn4}
&& \Big\|\int_0^t e^{(t-\tau)\Delta}\mathbb{P} (f^ke_0)d\tau \Big\|_{L_T^1(\mathcal{X}^2)} \leq T \|f\|_{L_T^{\infty}(\mathcal{X}^0)}^k
 \end{eqnarray}
\end{lem}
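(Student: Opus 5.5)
We follow the scheme of the proof of Lemma \ref{lem44}, replacing the bilinear estimate by the product estimate of Lemma \ref{lem2}. The basic pointwise facts are as follows. The Fourier symbol of $e^{(t-\tau)\Delta}\mathbb{P}$ is $e^{-(t-\tau)|\xi|^2}$ times a matrix $(\delta_{ij}-\xi_i\xi_j/|\xi|^2)$ of operator norm at most one. Hence for any $h$ and fixed $\xi$,
$$
\bigl|\mathcal{F}\bigl(e^{(t-\tau)\Delta}\mathbb{P}(h e_0)\bigr)(\xi)\bigr|\leq e^{-(t-\tau)|\xi|^2}|\widehat h(\xi)|\leq |\widehat h(\xi)|.
$$
We apply this with $h=f_1^pf_2^qf_3^r$.

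\textbf{Step 1 (the product).} By repeated use of \eqref{lem2-eqn2}, together with \eqref{lem3-eqn2} for the powers, we get
$$
\|h(\tau)\|_{\mathcal{X}^0}\leq \|f_1(\tau)\|_{\mathcal{X}^0}^p\|f_2(\tau)\|_{\mathcal{X}^0}^q\|f_3(\tau)\|_{\mathcal{X}^0}^r .
$$
The convention is $f^0=1$, and then the estimate is only meaningful if at least one exponent is positive; otherwise $\widehat{1}$ is a Dirac mass. I would remark that the lemma is used with $p+q+r\geq1$, which is the case of interest for $u_k^{2m+1}$. The bound is $\leq \|f_1\|_{L^\infty_T(\mathcal{X}^0)}^p\|f_2\|_{L^\infty_T(\mathcal{X}^0)}^q\|f_3\|_{L^\infty_T(\mathcal{X}^0)}^r=:M$ uniformly in $\tau\in[0,T]$.

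\textbf{Step 2 ($L^\infty_T(\mathcal{X}^0)$ bound).} Integrating the pointwise bound in $\xi$ and then in $\tau$ gives
$$
\Big\|\int_0^t e^{(t-\tau)\Delta}\mathbb{P}(he_0)\,d\tau\Big\|_{\mathcal{X}^0}\leq\int_0^t\|h(\tau)\|_{\mathcal{X}^0}d\tau\leq tM\leq TM .
$$
This is \eqref{lem5-eqn1}. Continuity in time follows in the standard way. Since $f_i\in C_T(\mathcal{X}^0)$ and the product is continuous by Lemma \ref{lem2} (via a telescoping difference), we have $h\in C_T(\mathcal{X}^0)$. Dominated convergence in the Duhamel integral, with dominating function $|\widehat h|$, then shows the result lies in $C_T(\mathcal{X}^0)$.

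\textbf{Step 3 ($L^1_T(\mathcal{X}^2)$ bound).} We use Fubini exactly as in Lemma \ref{lem44}:
$$
\int_0^T\!\!\int|\xi|^2\!\int_0^t e^{-(t-\tau)|\xi|^2}|\widehat h(\tau,\xi)|\,d\tau\, d\xi\, dt
=\int\!\!\int_0^T |\xi|^2\Big(\int_\tau^T e^{-(t-\tau)|\xi|^2}dt\Big)|\widehat h(\tau,\xi)|\,d\tau\, d\xi .
$$
Here $|\xi|^2\int_\tau^Te^{-(t-\tau)|\xi|^2}dt=1-e^{-(T-\tau)|\xi|^2}\leq1$, so the right-hand side is at most $\int_0^T\|h(\tau)\|_{\mathcal{X}^0}d\tau\leq TM$. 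This is \eqref{lem5-eqn2}.

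\textbf{Step 4 (special case).} Taking $f_1=f$, $p=k$ and $q=r=0$ yields \eqref{lem5-eqn3}–\eqref{lem5-eqn4}. Equivalently, apply \eqref{lem3-eqn2} directly.

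Main obstacle: there is no real analytic difficulty. The only delicate points are the bookkeeping of zero exponents (the constant function is not in $\mathcal{X}^0$), and the continuity in time of the Duhamel term, which requires the continuity of $\tau\mapsto h(\tau)$ in $\mathcal{X}^0$ from the multilinear estimate.
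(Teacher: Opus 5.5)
Your proposal is correct and follows essentially the same route as the paper. The three ingredients coincide: the pointwise bound $|\mathcal{F}(e^{(t-\tau)\Delta}\mathbb{P}(he_0))(\xi)|\leq |\widehat h(\xi)|$, the product estimates \eqref{lem2-eqn2}--\eqref{lem3-eqn2} to bound $\|h(\tau)\|_{\mathcal{X}^0}$, and the Fubini swap with $1-e^{-(T-\tau)|\xi|^2}\leq 1$ for the $L^1_T(\mathcal{X}^2)$ estimate. Your remarks on time continuity and on dropping zero-exponent factors (rather than estimating $\|1\|_{\mathcal{X}^0}$) fill in details the paper leaves implicit, and your choice $p=k$, $q=r=0$ for the special case is equivalent to the paper's $f_1=f_2=f_3=f$, $p+q+r=k$.
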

{\bf Proof.}\pn
 We start by proving $ \int_0^t e^{(t-\tau)\Delta}\mathbb{P}( f_1^p\,f_2^q\,f_3^re_0)d\tau\in  C_T(\mathcal{X}^0(\mathbb{R}^3)) :$ Using twice the inequality \eqref{lem2-eqn2} and the inequality \eqref{lem3-eqn2}, we obtain
\begin{eqnarray*}
\Big\|\int_0^t e^{(t-\tau)\Delta}\mathbb{P} (f_1^p\,f_2^q\,f_3^re_0)d\tau \Big\|_{\mathcal{X}^0}
&\leq &   \int_0^t \Big\| e^{(t-\tau)\Delta}\mathbb{P} (f_1^p\,f_2^q\,f_3^re_0) \Big\|_{\mathcal{X}^0}d\tau \\
&\leq & \int_0^t \Big\| e^{(t-\tau)\Delta} (f_1^p\,f_2^q\,f_3^r) \Big\|_{\mathcal{X}^0}d\tau \\
&\leq & \int_0^t\left( \int_{\mathbb{R}^3}  e^{- (t-\tau)|\xi|^2}  |\mathcal{F}(f_1^p\,f_2^q\,f_3^r)(\xi)|d\xi \right)d\tau \\
&\leq & \int_0^t\|f_1^p\,f_2^q\,f_3^r\|_{\mathcal{X}^0}d\tau \\
&\leq & T\|f_1\|_{L_T^{\infty}(\mathcal{X}^0)}^p\|f_2\|_{L_T^{\infty}(\mathcal{X}^0)}^q\|f_3 \|_{L_T^{\infty}(\mathcal{X}^0)}^r.
\end{eqnarray*}
Secondly, we prove that $ \int_0^t e^{(t-\tau)\Delta}\pb( f_1^p\,f_2^q\,f_3^re_0)d\tau\in
   L^1_T(\mathcal{X}^2(\mathbb{R}^3))$: We have
\begin{eqnarray*}
\Big\|\int_0^t e^{(t-\tau)\Delta}\mathbb{P}( f_1^p\,f_2^q\,f_3^re_0)d\tau\Big \|_{L^1_T(\mathcal{X}^2)}
&\leq &  \int_0^T\int_{\mathbb{R}^3}|\xi|^2\int_0^te^{-(t-\tau)|\xi|^2}|
\mathcal F(f_1^p\,f_2^q\,f_3^r)(\tau,\xi)|d\tau d\xi dt\\ 	
&\leq &  \int_{\mathbb{R}^3}|\xi|^2\Big(\int_0^T\int_0^te^{-(t-\tau)|\xi|^2}|\mathcal
F(f_1^p\,f_2^q\,f_3^r)(\tau,\xi)|d\tau  dt\Big)d\xi.
\end{eqnarray*}
As $\{(\tau,t)\in[0,T]^2:\;0\leq \tau\leq t\}=\{(\tau,t)\in[0,T]^2:\; \tau\leq t\leq T\}$, we
obtain
\begin{eqnarray*}
\Big \|\int_0^t e^{(t-\tau)\Delta}\mathbb{P}(f_1^p\,f_2^q\,f_3^re_0)d\tau\Big\|_{L^1_T(\mathcal{X}^2)}
&\leq &  \int_{\mathbb{R}^3}|\xi|^2\Big(\int_0^T[\int_\tau^Te^{-(t-\tau)|
\xi|^2}dt]|\mathcal F(f_1^p\,f_2^q\,f_3^r)(\tau,\xi)|d\tau\Big)d\xi\\
&\leq &  \int_{\mathbb{R}^3}|\xi|^2\Big(\int_0^T[\frac{1-e^{-(T-\tau)|\xi|^2}}{|\xi|^2}]|
\mathcal F(f_1^p\,f_2^q\,f_3^r)(\tau,\xi)|d\tau\Big)d\xi\\
&\leq &\int_{\mathbb{R}^3}\Big(\int_0^T|\mathcal F(f_1^p\,f_2^q\,f_3^r)(\tau,\xi)|d\tau\Big)d\xi\\
&\leq &  \int_0^T\int_{\mathbb{R}^3}|\mathcal F(f_1^p\,f_2^q\,f_3^r)(\tau,\xi)|d\xi d\tau\\
&\leq &  \int_0^T\|(f_1^p\,f_2^q\,f_3^r)(\tau)\|_{\mathcal{X}^0} d\tau.
\end{eqnarray*}
Similarly, using twice the inequality \eqref{lem2-eqn2} and the inequality \eqref{lem3-eqn2}, we obtain
\begin{eqnarray*}
\Big\| \int_0^t e^{(t-\tau)\Delta}\mathbb{P}( f_1^p\,f_2^q\,f_3^re_0)d\tau\Big\|_{L^1_T(\mathcal{X}^2)}
&\leq &T\,\|f_1\|_{L_T^\infty(\mathcal{X}^0)}^p\|f_2\|_{L_T^\infty(\mathcal{X}^0)}^q\|f_3\|_{L_T^\infty(\mathcal{X}^0)}^r.
\end{eqnarray*}
Finally, the particular case follows by taking $f_1=f_2=f_3=f$ and choosing $p,q,r\in\mathbb{N}_0$ such that $p+q+r=k$. Thus,
$$
f_1^p f_2^q f_3^r=f^{p+q+r}=f^k.
$$
Applying estimates \eqref{lem5-eqn1} and \eqref{lem5-eqn2}, we obtain
$$
\Big\|\int_0^t e^{(t-\tau)\Delta}\mathbb{P}(f^ke_0)d\tau
\Big\|_{L_T^{\infty}(\mathcal{X}^0)}
\leq
T\|f\|_{L_T^\infty(\mathcal{X}^0)}^k,
$$
and
$$
\Big\|\int_0^t e^{(t-\tau)\Delta}\mathbb{P}(f^ke_0)d\tau
\Big\|_{L_T^1(\mathcal{X}^2)}
\leq
T\|f\|_{L_T^\infty(\mathcal{X}^0)}^k.
$$
Therefore,
$$
\int_0^t e^{(t-\tau)\Delta}\mathbb{P}(f^ke_0)d\tau
\in
C_T(\mathcal{X}^0(\mathbb{R}^3))
\cap
L_T^1(\mathcal{X}^2(\mathbb{R}^3)).
$$
This completes the proof.
\qed
\begin{lem}\label{lem55}
Let $f_1, f_2, f_3\in C_T(\mathcal{X}_{a,\sigma}^0(\mathbb{R}^3))\cap L^1_T(\mathcal{X}_{a,\sigma}^2(\mathbb{R}^3))$. Then, for $p,q,r\in\mathbb{N}_0$, we have
$$
\int_0^t e^{(t-\tau)\Delta}\mathbb{P}( f_1^p\,f_2^q\,f_3^re_0)d\tau\in  C_T(\mathcal{X}_{a,\sigma}^0(\mathbb{R}^3))\cap L^1_T(\mathcal{X}_{a,\sigma }^2(\mathbb{R}^3)),
$$
Precisely, we obtain the following estimates
 \begin{eqnarray}\label{lem55-eqn1}
&&\Big\|\int_0^t e^{(t-\tau)\Delta}\mathbb{P} (f_1^p\,f_2^q\,f_3^r e_0)d\tau \Big\|_{L_T^{\infty}(\mathcal{X}_{a,\sigma}^0)} \leq T \|f_1\|_{L_T^{\infty}(\mathcal{X}_{a,\sigma}^0)}^p\|f_2\|_{L_T^{\infty}(\mathcal{X}_{a,\sigma }^0)}^q\|f_3\|_{L_T^{\infty}(\mathcal{X}_{a,\sigma }^0)}^r\\\label{lem55-eqn2}
&& \Big\|\int_0^t e^{(t-\tau)\Delta}\mathbb{P} (f_1^p\,f_2^q\,f_3^r e_0)d\tau \Big\|_{L_T^1(\mathcal{X}_{a,\sigma}^2)} \leq T \|f_1\|_{L_T^{\infty}(\mathcal{X}_{a,\sigma}^0)}^p\|f_2\|_{L_T^{\infty}(\mathcal{X}_{a,\sigma}^0)}^q\|f_3\|_{L_T^{\infty}(\mathcal{X}_{a,\sigma}^0)}^r.
 \end{eqnarray}
En particular: For $f^k\in C_T(\mathcal{X}_{a,\sigma }^0(\mathbb{R}^3))\cap L^1_T(\mathcal{X}_{a,\sigma}^2(\mathbb{R}^3))$ with $k\geq 2$, we have
$$
\int_0^t e^{(t-\tau)\Delta}\mathbb{P}( f^ke_0)d\tau\in   C_T(\mathcal{X}_{a,\sigma }^0(\mathbb{R}^3))\cap L^1_T(\mathcal{X}_{a,\sigma}^2(\mathbb{R}^3)),
$$
and the following estimates hold
 \begin{eqnarray}\label{lem55-eqn3}
&&\Big\|\int_0^t e^{(t-\tau)\Delta}\mathbb{P} (f^ke_0)d\tau \Big\|_{L_T^{\infty}(\mathcal{X}_{a,\sigma}^0)} \leq T \|f\|_{L_T^{\infty}(\mathcal{X}_{a,\sigma}^0)}^k\\\label{lem55-eqn4}
&& \Big\|\int_0^t e^{(t-\tau)\Delta}\mathbb{P} (f^ke_0)d\tau \Big\|_{ L_T^1(\mathcal{X}_{a,\sigma }^2)} \leq T \|f\|_{L_T^{\infty}(\mathcal{X}_{a,\sigma}^0)}^k.
 \end{eqnarray}
\end{lem}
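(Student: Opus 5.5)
The plan is to follow the proof of Lemma \ref{lem5} line by line, replacing the unweighted Fourier norm by the weighted one and the product estimate \eqref{lem2-eqn2} by \eqref{lem22-eqn2}. The heat semigroup and the Leray projector act on the Fourier side as multiplication by $e^{-(t-\tau)|\xi|^2}$ and by a matrix of norm at most one. Neither touches the weight $e^{a|\xi|^{1/\sigma}}$, so for each $\tau$ we get
$$
\big\|e^{(t-\tau)\Delta}\mathbb{P}(f_1^pf_2^qf_3^re_0)\big\|_{\mathcal{X}_{a,\sigma}^0}\leq \|f_1^pf_2^qf_3^r\|_{\mathcal{X}_{a,\sigma}^0}.
$$

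\textbf{Step 1 (the product estimate).} This is the main obstacle. Lemma \ref{lem33} only gives the power estimate with the constant $2^{k-1}$, which does not match the constant-free bounds \eqref{lem55-eqn1}--\eqref{lem55-eqn2}. I will not use it. Instead I will iterate \eqref{lem22-eqn2}, whose proof rests on the subadditivity $e^{a|\xi|^{1/\sigma}}\leq e^{a|\xi-\eta|^{1/\sigma}}e^{a|\eta|^{1/\sigma}}$ and therefore carries constant $1$. An induction on $k$, exactly as in the proof of \eqref{lem3-eqn2}, gives $\|f^k\|_{\mathcal{X}_{a,\sigma}^0}\leq\|f\|_{\mathcal{X}_{a,\sigma}^0}^k$. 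Two more applications of \eqref{lem22-eqn2} then give
$$
\|f_1^pf_2^qf_3^r\|_{\mathcal{X}_{a,\sigma}^0}\leq \|f_1\|_{\mathcal{X}_{a,\sigma}^0}^p\|f_2\|_{\mathcal{X}_{a,\sigma}^0}^q\|f_3\|_{\mathcal{X}_{a,\sigma}^0}^r .
$$
Zero exponents are harmless, since the corresponding factor is absent.

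\textbf{Step 2 (the $L^\infty_T(\mathcal{X}^0_{a,\sigma})$ bound).} I integrate the pointwise-in-time bound over $\tau\in[0,t]$ and bound each norm by its $L^\infty_T$ norm. This gives \eqref{lem55-eqn1}. Continuity in time follows from dominated convergence on the Fourier side, as in Lemma \ref{lem44}.

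\textbf{Step 3 (the $L^1_T(\mathcal{X}^2_{a,\sigma})$ bound).} I apply Fubini on $\{0\le\tau\le t\le T\}$ and use
$$
|\xi|^2\int_\tau^T e^{-(t-\tau)|\xi|^2}\,dt=1-e^{-(T-\tau)|\xi|^2}\leq 1.
$$
This reduces the bound to $\int_0^T\|f_1^pf_2^qf_3^r\|_{\mathcal{X}_{a,\sigma}^0}\,d\tau$, exactly as in Lemma \ref{lem44}. Step 1 then yields \eqref{lem55-eqn2}.

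Finally, the particular case follows by taking $f_1=f_2=f_3=f$ with $p+q+r=k$, which gives \eqref{lem55-eqn3}--\eqref{lem55-eqn4}.
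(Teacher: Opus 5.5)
Your proposal is correct and follows the paper's argument. You use a pointwise-in-time weighted product bound, then integration in $\tau$ for the $L^\infty_T(\mathcal{X}^0_{a,\sigma})$ estimate, and Fubini with $1-e^{-(T-\tau)|\xi|^2}\le 1$ for the $L^1_T(\mathcal{X}^2_{a,\sigma})$ estimate. The one difference is in the power estimate: the paper cites \eqref{lem33-eqn2}, whose factor $2^{k-1}$ does not give the constant-free bounds as stated, whereas your induction on the constant-free \eqref{lem22-eqn2} does. This is the same device the paper itself uses for the $\frac{a}{\sigma}$ weight inside the proof of Lemma \ref{lem33}.
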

{\bf Proof.}\pn
 We start by proving $ \int_0^t e^{(t-\tau)\Delta}\pb(  f_1^p\,f_2^q\,f_3^re_0)d\tau\in  C_T(\mathcal{X}_{a,\sigma }^0(\mathbb{R}^3)) :$
Using twice the inequality \eqref{lem22-eqn2} and the inequality \eqref{lem33-eqn2}, we obtain
\begin{eqnarray*}
\Big\|\int_0^t e^{(t-\tau)\Delta}\mathbb{P} (f_1^p\,f_2^q\,f_3^re_0)d\tau \Big\|_{\mathcal{X}_{a,\sigma}^0}
&\leq &   \int_0^t \Big\| e^{(t-\tau)\Delta}\mathbb{P} (f_1^p\,f_2^q\,f_3^re_0) \Big\|_{\mathcal{X}_{a,\sigma}^0}d\tau \\
&\leq & \int_0^t \Big\| e^{(t-\tau)\Delta} (f_1^p\,f_2^q\,f_3^r) \Big\|_{\mathcal{X}_{a,\sigma}^0}d\tau \\
&\leq & \int_0^t\left( \int_{\mathbb{R}^3}  e^{-(t-\tau)|\xi|^2} e^{a|\xi|^{\frac{1}{\sigma}}} |\mathcal{F}(f_1^p\,f_2^q\,f_3^r)(\xi)|d\xi \right)d\tau \\
&\leq & \int_0^t\|f_1^p\,f_2^q\,f_3^r\|_{\mathcal{X}_{a,\sigma }^0}d\tau \\
&\leq & T\|f_1\|_{L_T^{\infty}(\mathcal{X}_{a,\sigma }^0)}^p\|f_2\|_{L_T^{\infty}(\mathcal{X}_{a,\sigma}^0)}^q\|f_3 \|_{L_T^{\infty}(\mathcal{X}_{a,\sigma}^0)}^r.
\end{eqnarray*}
Secondly, we prove that $ \int_0^t e^{(t-\tau)\Delta}\pb( f_1^p\,f_2^q\,f_3^re_0)d\tau\in
  L^1_T(\mathcal{X}_{a,\sigma }^2(\mathbb{R}^3))$: We have
\begin{eqnarray*}
\Big\|\int_0^t e^{(t-\tau)\Delta}\mathbb{P}( f_1^p\,f_2^q\,f_3^r e_0)d\tau\Big \|_{L^1_T(\mathcal{X}_{a,\sigma }^2)}
&\leq &  \int_0^T\int_{\mathbb{R}^3}|\xi|^2\int_0^te^{-(t-\tau)|\xi|^2} e^{a|\xi|^{\frac{1}{\sigma}}}|\mathcal F(f_1^p\,f_2^q\,f_3^r)(\tau,\xi)|d\tau d\xi dt\\ 	
&\leq &  \int_{\mathbb{R}^3}|\xi|^2\Big(\int_0^T\int_0^te^{-(t-\tau)|\xi|^2} e^{a|\xi|^{\frac{1}{\sigma}}}|\mathcal
F(f_1^p\,f_2^q\,f_3^r)(\tau,\xi)|d\tau  dt\Big)d\xi.
\end{eqnarray*}
As $\{(\tau,t)\in[0,T]^2:\;0\leq \tau\leq t\}=\{(\tau,t)\in[0,T]^2:\; \tau\leq t\leq T\}$, we
obtain
\begin{eqnarray*}
\Big \|\int_0^t e^{(t-\tau)\Delta}\mathbb{P}(f_1^p\,f_2^q\,f_3^re_0)d\tau\Big\|_{L^1_T(\mathcal{X}_{a,\sigma}^2)}
&\leq & \int_{\mathbb{R}^3}|\xi|^2\Big(\int_0^T[\int_\tau^Te^{-(t-\tau)|
\xi|^2}dt]e^{a|\xi|^{\frac{1}{\sigma}}}|\mathcal F(f_1^p\,f_2^q\,f_3^r)(\tau,\xi)|d\tau\Big)d\xi\\
&\leq &  \int_{\mathbb{R}^3}|\xi|^2\Big(\int_0^T[\frac{1-e^{-(T-\tau)|\xi|^2}}{|\xi|^2}]
e^{a|\xi|^{\frac{1}{\sigma}}}|\mathcal F(f_1^p\,f_2^q\,f_3^r)(\tau,\xi)|d\tau\Big)d\xi\\
&\leq &\int_{\mathbb{R}^3}\Big(\int_0^T e^{a|\xi|^{\frac{1}{\sigma}}}|\mathcal F(f_1^p\,f_2^q\,f_3^r)(\tau,\xi)|d\tau\Big)d\xi\\
&\leq &  \int_0^T\int_{\mathbb{R}^3} e^{a|\xi|^{\frac{1}{\sigma}}}|\mathcal F(f_1^p\,f_2^q\,f_3^r)(\tau,\xi)|d\xi d\tau\\
&\leq &  \int_0^T\|f_1^p\,f_2^q\,f_3^r(\tau)\|_{\mathcal{X}_{a,\sigma }^0} d\tau.
\end{eqnarray*}
Similarly, using twice the inequality \eqref{lem22-eqn2} and the inequality \eqref{lem33-eqn2}, we obtain
\begin{eqnarray*}
\Big\| \int_0^t e^{(t-\tau)\Delta}\mathbb{P}( f_1^p\,f_2^q\,f_3^re_0)d\tau\Big\|_{L^1_T(\mathcal{X}_{a,\sigma}^2)}
&\leq &T\,\|f_1\|_{L_T^\infty(\mathcal{X}_{a,\sigma}^0)}^p\|f_2\|_{L_T^\infty
(\mathcal{X}_{a,\sigma}^0)}^q\|f_3\|_{L_T^\infty(\mathcal{X}_{a,\sigma }^0)}^r.
\end{eqnarray*}
Finally, the particular case follows by taking $f_1=f_2=f_3=f$ and choosing $p,q,r\in\mathbb{N}_0$ such that $p+q+r=k$. Thus,
$$
f_1^p f_2^q f_3^r=f^{p+q+r}=f^k.
$$
Applying the previous estimates, we get
$$
\Big\|\int_0^t e^{(t-\tau)\Delta}\mathbb{P}(f^ke_0)d\tau
\Big\|_{L_T^{\infty}(\mathcal{X}_{a,\sigma}^0)}
\leq
T\|f\|_{L_T^\infty(\mathcal{X}_{a,\sigma}^0)}^k,
$$
and
$$
\Big\|\int_0^t e^{(t-\tau)\Delta}\mathbb{P}(f^ke_0)d\tau
\Big\|_{L_T^1(\mathcal{X}_{a,\sigma}^2)}
\leq
T\|f\|_{L_T^\infty(\mathcal{X}_{a,\sigma}^0)}^k.
$$
Therefore,
$$
\int_0^t e^{(t-\tau)\Delta}\mathbb{P}(f^ke_0)d\tau
\in
C_T(\mathcal{X}_{a,\sigma}^0(\mathbb{R}^3))
\cap
L_T^1(\mathcal{X}_{a,\sigma}^2(\mathbb{R}^3)).
$$
This completes the proof.
 \finpr
\section{\bf Proof of Theorem \ref{theo1}}
Let $R$ be a positive real number. For $T>0$, we define the Banach space
$$
E_T=\mathcal{C}_{T}\bigl(\X^{0}(\mathbb{R}^{3})\bigr)
\cap L^{1}_{T}\bigl(\X^{2}(\mathbb{R}^{3})\bigr)
$$
equipped with norm
$$
\|u\|_{E_T}=\|u\|_{L_T^{\infty}(\X^0)}+\|u\|_{L_T^1(\X^2)}.
$$
We denote the closed subset $F_{R, T}$ of $E_T$ defined by
$$
F_{R, T}=\Big\{u\in E_T; \|u\|_{L_T^{\infty}(\mathcal{X}^0)}\leq 2\|u^0\|_{\X^0},\;\|u\|_{L_T^1(\X^2)}\leq R\Big\}.
$$
Now, consider the operator
\begin{eqnarray*}
\Psi\,&:&\,F_{R, T}\longrightarrow	 E_T\\
               && u\longmapsto \Psi (u)= e^{t\Delta}u^0-\mathcal{B}(u,u)(t)-\alpha \mathcal{D}_m(u)(t).
\end{eqnarray*}
To apply the fixed-point theorem, it suffices to prove that the following estimates hold
for $T$ and $R$ sufficiently small:
\begin{eqnarray}\label{eq31}
&&\Psi(F_{R, T})\subset F_{R, T}\\\label{eq32}
&& \|\Psi(u)-\Psi(v)\|_{E_T}\leq \frac{1}{2}\|u-v\|_{E_T},\,\,\forall u,v\in F_{R,T}.
\end{eqnarray}
First, we prove \eqref{eq31}: \pn
$\bullet$ To estimate $\Psi(u)$ in $\mathcal{X}^0(\rb^3)$, we have
\begin{eqnarray*}
\| e^{t\Delta}u^0\|_{L_T^{\infty}(\mathcal{X}^0)} &\leq & \|u^0\|_{\mathcal{X}^0}.\hskip 3cm
\end{eqnarray*}
 Using the inequalities \eqref{lem4-eqn1}-\eqref{lem5-eqn3} and the fact that $u\in F_{R,T}$, we obtain
\begin{eqnarray*}
\|\mathcal{B}(u,u)\|_{L_T^{\infty}(\mathcal{X}^0)}&\leq &T^{\frac{1}{2}}\|u\|_{L_T^{\infty}(\mathcal{X}^0)}^{\frac{3}{2}}\|u\|_{L_T^{1}(\mathcal{X}^2)}^{\frac{1}{2}} \\
&\leq &  2\sqrt{2}T^{\frac{1}{2}}R^{\frac{1}{2}}\|u^0\|_{\mathcal{X}^0}^{\frac{3}{2}}
\end{eqnarray*}
and
\begin{eqnarray*}
\| \mathcal{D}_m(u)\|_{L_T^{\infty}(\mathcal{X}^0)}
&\leq & T\|u\|_{L_T^{\infty}(\mathcal{X}^0)}^{2m+1}\\
&\leq & 2^{2m+1}  T\|u^0\|_{\mathcal{X}^0}^{2m+1}.
\end{eqnarray*}
Then, there is a time $T_1>0$ such that
$$
  2\sqrt{2}T^{\frac{1}{2}}R^{\frac{1}{2}}\|u^0\|_{\mathcal{X}^0}^{\frac{3}{2}}+ 2^{2m+1}\alpha T\|u^0\|_{\mathcal{X}^0}^{2m+1} \leq \|
u^0\|_{\mathcal{X}^0},\,\,\forall \,0<T\leq T_1.
$$
By using this choice, we obtain
\begin{eqnarray}\label{eq33}
\|\Psi(u)\|_{L_T^{\infty}(\mathcal{X}^0)}\leq 2\|u^0\|_{\mathcal{X}^0},\,\forall u\in 	F_{R, T}.
\end{eqnarray}
\noindent $\bullet$ To estimate $\Psi(u)$ in $L^1_T(\X^2(\rb^3))$, we have
\begin{eqnarray*}
\|e^{t\Delta}u^0\|_{L_T^{1}(\X^2)} &= & \int_0^T\int_{\xi}|\xi|^2e^{-t|\xi|^2}|\wh{u^0(\xi)}|
d\xi dt\\
&=&\int_{\xi}\left(1-e^{-T|\xi|^2}\right)|\wh{u^0(\xi)}|d\xi.
\end{eqnarray*}
By dominated convergence theorem, there is a time $T_2>0$ such that
$$
\|e^{t\Delta}u^0\|_{L_T^{1}(\X^2)}<\frac{R}{3},\;\forall\,0<T\leq T_2.
$$
By inequality \eqref{lem4-eqn2}, we get
\begin{eqnarray*}
\|\mathcal{B}(u,u)\|_{L_T^{1}(\X^2)} &\leq &T^{\frac{1}{2}}\|u\|_{L_T^{\infty}(\mathcal{X}^0)}^{\frac{3}{2}}\|
u\|_{L_T^{1}(\X^2)}^{\frac{1}{2}}\\
&\leq &2\sqrt{2}T^{\frac{1}{2}} R^{\frac{1}{2}}\|u^0\|_{\mathcal{X}^0}^{\frac{3}{2}}.
\end{eqnarray*}
By inequality \eqref{lem5-eqn4}, we have
\begin{eqnarray*}
\|\mathcal{D}_m(u)\|_{L_T^{1}(\X^2)}&\leq& T\|u\|_{L_T^{\infty}(\mathcal{X}^0)}^{2m+1}\\
&\leq& 2^{2m+1} T\|u^0\|_{\mathcal{X}^0}^{2m+1}.
\end{eqnarray*}
Therefore, there exists a time $T_3>0$ such that
$$
2\sqrt{2}T^{\frac12}R^{\frac12}
\|u^0\|_{\mathcal{X}^0}^{\frac32}
+
2^{2m+1}\alpha T
\|u^0\|_{\mathcal{X}^0}^{2m+1}
\leq \frac{2R}{3},
\qquad \forall\,0<T\leq T_3.
$$
Hence,
\begin{equation}\label{eq34}
\|\Psi(u)\|_{L_T^1(\mathcal{X}^2)}
\leq R,
\qquad \forall u\in F_{R,T}.
\end{equation}
Then, choosing
$$
0<T\leq \min\{T_1,T_2,T_3\},
$$
we deduce from \eqref{eq33} and \eqref{eq34} that
$$
\Psi(F_{R,T})\subset F_{R,T}.
$$
Second, we prove \eqref{eq32}:
By using the fact that
\begin{eqnarray}\label{blow-10}
u\cdot \nabla u-v\cdot \nabla v=u\cdot\nabla (u-v)+(u-v)\cdot\nabla v	
\end{eqnarray}
From inequality \eqref{lem4-eqn1}, for $u,v\in F_{R, T}$, we have
\begin{eqnarray}\nonumber
\|\mathcal{B}(u,u)-\mathcal{B}(v,v)\|_{L_T^{\infty}(\mathcal{X}^0)}
&\leq & T^{\frac{1}{2}}\,\|u\|_{L_T^{\infty}(\mathcal{X}^0)}
\|u-v\|_{L_T^{\infty}(\mathcal{X}^0)}^{\frac{1}{2}}\|u-v\|_{L_T^{1}(\mathcal{X}^2)}^{\frac{1}{2}}\\\nonumber
&& + T^{\frac{1}{2}}\,\|u-v\|_{L_T^\infty(\X^0)}\|v\|_{L_T^{\infty}(\X^0)}^{\frac{1}{2}}\|v\|_{L_T^{1}(\X^2)}^{\frac{1}{2}}\\\nonumber
&\leq& 2 T^{\frac{1}{2}}\,\|u^0\|_{\X^0}\|u-v\|_{E_T}+\sqrt{2} T^{\frac{1}{2}}R^{\frac{1}{2}}\|u^0\|_{\X^0}^{\frac{1}{2}} \|u-v\|_{E_T}\\\label{eq35}
&\leq &\sqrt{2}T^{\frac{1}{2}}\|u^0\|_{\X^0}^{\frac{1}{2}}\left(\sqrt{2}\|u^0\|_{\X^0}^{\frac{1}{2}}+R^{\frac{1}{2}}\right)\|u-v\|_{E_T}.
\end{eqnarray}
By using inequality \eqref{lem4-eqn2} , we obtain
\begin{eqnarray}\nonumber
\|\mathcal{B}(u,u)-\mathcal{B}(v,v)\|_{L_T^1(\X^2)}	
&\leq & T^{\frac{1}{2}}\,\|u\|_{L_T^{\infty}(\X^0)}
\|u-v\|_{L_T^{\infty}(\X^0)}^{\frac{1}{2}}\|u-v\|_{L_T^{1}(\X^2)}^{\frac{1}{2}}\\\nonumber
&& + T^{\frac{1}{2}}\,\|u-v\|_{L_T^\infty(\X^0)}\|v\|_{L_T^{\infty}(\X^0)}^{\frac{1}{2}}\|v\|_{L_T^{1}
(\X^2)}^{\frac{1}{2}}\\\nonumber
&\leq & 2 T^{\frac{1}{2}}\,\|u^0\|_{\X^0}\|u-v\|_{E_T}+\sqrt{2}T^{\frac{1}{2}} R^{\frac{1}{2}}\|u^0\|_{\X^0}^{\frac{1}{2}} \|u-v\|_{E_T}\\\label{eq36}
&\leq &\sqrt{2} T^{\frac{1}{2}}\|u^0\|_{\X^0}^{\frac{1}{2}}\left(\sqrt{2}\|u^0\|_{\X^0}^{\frac{1}{2}}+R^{\frac{1}{2}}\right)\|u-v\|_{E_T}.
\end{eqnarray}
Now, using the fact that
\begin{eqnarray}\nonumber
u_k^{2m+1}-v_k^{2m+1} &=& (u_k-v_k)\sum_{j=0}^{2m}u_k^{2m-j}v_k^j\\
&=& (u_k-v_k) \left(u_k^{2m}+u_k^{2m-1}v_k+...+u_kv_k^{2m-1}+v_k^{2m}\right),\label{blow-11}
\end{eqnarray}
we get
\begin{eqnarray}\nonumber
\mathcal{D}_m(u)-\mathcal{D}_m(v) &=& -\sum_{k=1}^3\int_0^te^{(t-\tau)\Delta }\pb  (u_k^{2m+1}-v_k^{2m+1})e_kd\tau\\
&=&  -\sum_{k=1}^3\int_0^te^{(t-\tau)\Delta }\pb  \left( (u_k-v_k) \left(u_k^{2m}+u_k^{2m-1}v_k+...+u_kv_k^{2m-1}+v_k^{2m}\right)\right)e_kd\tau.\label{blow-12}
\end{eqnarray}
By using inequality \eqref{lem5-eqn1}, we get
\begin{eqnarray}\nonumber
\|\mathcal{D}_m(u)-\mathcal{D}_m(v)\|_{L_T^{\infty}(\X^0)}
&\leq & T\Big[\|u\|_{L_T^{\infty}(\X^0)}^{2m}+ \|u\|_{L_T^{\infty}(\X^0)}^{2m-1}\|v\|_{L_T^{\infty}(\X^0)}+..\\\nonumber
&&..+ \|u\|_{L_T^{\infty}(\X^0)}\|v\|_{L_T^{\infty}(\X^0)}^{2m-1}+\|v\|_{L_T^{\infty}(\X^0)}^{2m}\Big]\|u-v\|_{L_T^{\infty}(\X^0)}\\\nonumber
&\leq & (2m+1)T\Big[\|u\|_{L_T^{\infty}(\X^0)}^{2m}+ \|v\|_{L_T^{\infty}(\X^0)}^{2m}\Big]\|u-v\|_{L_T^{\infty}(\X^0)}\\ \label{eq37}
&\leq & (2m+1)2^{2m+1}T\|u^0\|_{\X^0}^{2m}\|u-v\|_{E_T}.
\end{eqnarray}
By using inequalities \eqref{lem5-eqn2}, we get
\begin{eqnarray}\nonumber
\|\mathcal{D}_m(u)-\mathcal{D}_m(v)\|_{L_T^1(\X^2)}
&\leq &  T\Big[\|u\|_{L_T^{\infty}(\X^0)}^{2m}+ \|u\|_{L_T^{\infty}(\X^0)}^{2m-1}\|v\|_{L_T^{\infty}(\X^0)}+...\\\nonumber
&&...+ \|u\|_{L_T^{\infty}(\X^0)}\|v\|_{L_T^{\infty}(\X^0)}^{2m-1}+\|v\|_{L_T^{\infty}(\X^0)}^{2m}\Big]\|u-v\|_{L_T^{\infty}(\X^0)}\\\nonumber
&\leq &   (2m+1)T\Big[\|u\|_{L_T^{\infty}(\X^0)}^{2m}+\|v\|_{L_T^{\infty}(\X^0)}^{2m}\Big]\|u-v\|_{L_T^{\infty}(\X^0)}\\\label{eq38}
&\leq & (2m+1)2^{2m+1}T\|u^0\|_{\X^0}^{2m}\|u-v\|_{E_T}
\end{eqnarray}
Finally, by decreasing $T>0$ if necessary, while keeping
$$
T\leq \min\{T_1,T_2,T_3\},
$$
we may assume that
\begin{eqnarray*}
&& \sqrt{2} T^{\frac{1}{2}}\|u^0\|_{\X^0}^{\frac{1}{2}}\left(\sqrt{2}\|u^0\|_{\X^0}^{\frac{1}{2}}+R^{\frac{1}{2}}\right)\leq \frac{1}{8}, \\
&& (2m+1)2^{2m+1} \alpha T\|u^0\|_{\X^0}^{2m}\leq \frac{1}{8}
\end{eqnarray*}
Combining \eqref{eq35}--\eqref{eq38} with the above choices of $T$, we obtain
$$
\|\Psi(u)-\Psi(v)\|_{E_T}\leq\frac12\|u-v\|_{E_T}.
$$
Hence, $\Psi$ is a contraction on $F_{R,T}$.\\
Therefore, the Banach fixed-point theorem yields a unique local solution to
$(NSE_M)$ in
$$
\mathcal{C}_{T}(\mathcal{X}^{0}(\mathbb{R}^{3}))
\cap
L^{1}_{T}(\mathcal{X}^{2}(\mathbb{R}^{3})).
$$
This completes the proof of Theorem~\ref{theo1}.

\section{\bf Proof of Theorem \ref{theo2}}

Let $u\in C([0,T^*),\mathcal{X}^0(\mathbb{R}^3)) \cap L^1_{\mathrm{loc}}([0,T^*),\mathcal{X}
^2(\mathbb{R}^3))$ be the maximal solution of $(NSE_M)$ given by Theorem~\ref{theo1}.
From the first equation of the system $(NSE_M)$, we have
\begin{eqnarray*}
&& \|u(t)\|_{\mathcal{X}^0}+\int_0^t\|u(z)\|_{\mathcal{X}^2}dz\\
&& \quad \leq  \|u^0\|_{\mathcal{X}^0}+\int_0^t\|u(z)\|_{\mathcal{X}^0}\|u(z)\|_{\mathcal{X}^1}dz+\alpha\int_0^t\|u(z)\|_{\mathcal{X}^0}^{2m+1}dz.	
\end{eqnarray*}
By using the inequality \eqref{lem1-eqn1}, we get
\begin{eqnarray*}
&& \|u(t)\|_{\mathcal{X}^0}+\int_0^t\|u(z)\|_{\mathcal{X}^2}dz\\
&& \quad \leq  \|u^0\|_{\mathcal{X}^0}+\int_0^t\|u(z)\|_{\mathcal{X}^0}^{\frac{3}{2}}\|u(z)\|_{\mathcal{X}^2}^{\frac{1}{2}}dz+\alpha\int_0^t\|u(z)\|_{\mathcal{X}^0}^{2m+1}dz\\	
\end{eqnarray*}
The elementary inequality
$$
xy\leq \frac{x^2}{2}+\frac{y^2}{2},\;\forall x,y\geq0,
$$
implies that
\begin{eqnarray*}
\|u(t)\|_{\mathcal{X}^0}+\frac{1}{2}\int_0^t\|u(z)\|_{\mathcal{X}^2}dz
&\leq &\|u^0\|_{\mathcal{X}^0}+\frac{1}{2}\int_0^t\|u(z)\|_{\mathcal{X}^0}^3dz+\alpha\int_0^t\|u(z)\|_{\mathcal{X}^0}^{2m+1}dz\\
&\leq &\|u^0\|_{\mathcal{X}^0}+ \int_0^t\left(\frac{1}{2}\|u(z)\|_{\mathcal{X}^0}^2+\alpha \|u(z)\|_{\mathcal{X}^0}^{2m}\right)\|u(z)\|_{\mathcal{X}^0}dz.\\
\end{eqnarray*}
By Gr\"onwall Lemma, we obtain estimate \eqref{theo2-eqn1}.
\section{\bf Proof of Theorem \ref{theo3}}
Let $R$ be a positive real number. For $T>0$, we define the Banach space
$$
E_T^{'}=\mathcal{C}_{T}\bigl(\mathcal{X}_{a,\sigma}^{0}(\mathbb{R}^{3})\bigr)
\cap L^{1}_{T}\bigl(\mathcal{X}_{a,\sigma}^{2}(\mathbb{R}^{3})\bigr)
$$
equipped with norm
$$
\|u\|_{E_T^{'}}=\|u\|_{L_T^{\infty}(\mathcal{X}_{a,\sigma}^0)}+\|u\|_{L_T^1(\mathcal{X}_{a,\sigma}^2)}.
$$
We denote the closed subset $F_{R, T}^{'}$ of $E_T^{'}$ defined by
$$
F_{R, T}^{'}=\Big\{u\in E_T^{'}; \|u\|_{L_T^{\infty}(\mathcal{X}_{a,\sigma}^0)}\leq 2\|u^0\|_{\mathcal{X}_{a,\sigma}^0},\;\|u\|_{L_T^1(\mathcal{X}_{a,\sigma}^2)}\leq R\Big\}.
$$
Now, consider the operator
\begin{eqnarray*}
\Phi\,&:&\,F_{R, T}^{'}\longrightarrow	 E_T^{'}\\
               && u\longmapsto \Phi (u)= e^{t\Delta}u^0-\mathcal{B}(u,u)(t)-\alpha \mathcal{D}_m(u)(t).
\end{eqnarray*}
To apply the fixed-point theorem, it suffices to prove that the following estimates hold
for $T$ and $R$ sufficiently small:
\begin{eqnarray}\label{eq51}
&&\Phi(F_{R, T}^{'})\subset F_{R, T}^{'}\\\label{eq52}
&& \|\Phi(u)-\Phi(v)\|_{E_T^{'}}\leq \frac{1}{2}\|u-v\|_{E_T^{'}},\,\,\forall u,v\in F_{R,T}^{'}.
\end{eqnarray}
 First, we prove \eqref{eq51}: \pn
$\bullet$ To estimate $\Phi(u)$ in $\mathcal{X}_{a,\sigma}^0(\mathbb{R}^3)$, we have
\begin{eqnarray*}
\| e^{t\Delta}u^0\|_{L_T^{\infty}(\mathcal{X}_{a,\sigma}^0)} &\leq & \|u^0\|_{\mathcal{X}_{a,\sigma}^0}.\hskip 3cm
\end{eqnarray*}
 Using inequalities \eqref{lem44-eqn1}-\eqref{lem55-eqn3} and the fact that $u\in F_{R,T}^{'}$, we obtain
\begin{eqnarray*}
\|\mathcal{B}(u,u)\|_{L_T^{\infty}(\mathcal{X}_{a,\sigma}^0)}&\leq &T^{\frac{1}{2}}\|u\|_{L_T^{\infty}(\mathcal{X}_{a,\sigma}^0)}^{\frac{3}{2}}\|u\|_{L_T^{1}(\mathcal{X}_{a,\sigma}^2)}^{\frac{1}{2}} \\
&\leq &  2\sqrt{2}T^{\frac{1}{2}}R^{\frac{1}{2}}\|u^0\|_{\mathcal{X}_{a,\sigma}^0}^{\frac{3}{2}}
\end{eqnarray*}
and
\begin{eqnarray*}
\| \mathcal{D}_m(u)\|_{L_T^{\infty}(\mathcal{X}_{a,\sigma}^0)}
&\leq & T\|u\|_{L_T^{\infty}(\mathcal{X}_{a,\sigma}^0)}^{2m+1}\\
&\leq & 2^{2m+1}  T\|u^0\|_{\mathcal{X}_{a,\sigma}^0}^{2m+1}.
\end{eqnarray*}
Then, there is a time $T_1>0$ such that
$$
  2\sqrt{2}T^{\frac{1}{2}}R^{\frac{1}{2}}\|u^0\|_{\mathcal{X}_{a,\sigma}^0}^{\frac{3}{2}}+ 2^{2m+1}\alpha T\|u^0\|_{\mathcal{X}_{a,\sigma}^0}^{2m+1} \leq \|
u^0\|_{\mathcal{X}_{a,\sigma}^0},\,\,\forall \,0<T\leq T_1.
$$
By using this choice, we obtain
\begin{eqnarray}\label{eq53}
\|\Phi(u)\|_{L_T^{\infty}(\mathcal{X}_{a,\sigma}^0)}\leq 2\|u^0\|_{\mathcal{X}_{a,\sigma}^0},\,\forall u\in 	F_{R, T}^{'}.
\end{eqnarray}
\noindent $\bullet$ To estimate $\Phi(u)$ in $L^1_T(\mathcal{X}_{a,\sigma}^2(\mathbb{R}^3))$, we have
\begin{eqnarray*}
\|e^{t\Delta}u^0\|_{L_T^{1}(\mathcal{X}_{a,\sigma}^2)} &= & \int_0^T\int_{\xi}|\xi|^2e^{-t|\xi|^2} e^{a|\xi|^{\frac{1}{\sigma}}}|\wh{u^0(\xi)}|d\xi dt\\
&=&\int_{\xi}\left(1-e^{-T|\xi|^2}\right) e^{a|\xi|^{\frac{1}{\sigma}}}|\wh{u^0(\xi)}|d\xi.
\end{eqnarray*}
By dominated convergence theorem, there is a time $T_2>0$ such that
$$
\|e^{t\Delta}u^0\|_{L_T^{1}(\mathcal{X}_{a,\sigma}^2)}<\frac{R}{3},\;\forall\,0<T\leq T_2.
$$
By the inequality \eqref{lem44-eqn2}, we get
\begin{eqnarray*}
\|\mathcal{B}(u,u)\|_{L_T^{1}(\mathcal{X}_{a,\sigma}^2)} &\leq &T^{\frac{1}{2}}\|u\|_{L_T^{\infty}(\mathcal{X}_{a,\sigma}^0)}^{\frac{3}{2}}\|u\|_{L_T^{1}(\mathcal{X}_{a,\sigma}^2)}^{\frac{1}{2}}\\
&\leq &2\sqrt{2}T^{\frac{1}{2}} R^{\frac{1}{2}}\|u^0\|_{\mathcal{X}_{a,\sigma}^0}^{\frac{3}{2}} .
\end{eqnarray*}
By inequality \eqref{lem55-eqn4}, we have
\begin{eqnarray*}
\|\mathcal{D}_m(u)\|_{L_T^{1}(\mathcal{X}_{a,\sigma}^2)}&\leq& T\|u\|_{L_T^{\infty}(\mathcal{X}_{a,\sigma}^0)}^{2m+1}\\
&\leq& 2^{2m+1} T\|u^0\|_{\mathcal{X}_{a,\sigma}^0}^{2m+1}.
\end{eqnarray*}
Therefore, there exists a time $T_3>0$ such that
$$
2\sqrt{2}T^{\frac12}R^{\frac12}
\|u^0\|_{\mathcal{X}_{a,\sigma}^0}^{\frac32}
+
2^{2m+1}\alpha T
\|u^0\|_{\mathcal{X}_{a,\sigma}^0}^{2m+1}
\leq \frac{2R}{3},
\qquad \forall\,0<T\leq T_3.
$$
Hence,
\begin{equation}\label{eq54}
\|\Phi(u)\|_{L_T^1(\mathcal{X}_{a,\sigma}^2)}
\leq  R,
\qquad \forall u\in F_{R,T}'.
\end{equation}
Then, choosing
$$
0<T\leq \min\{T_1,T_2,T_3\},
$$
we deduce from \eqref{eq53} and \eqref{eq54} that
$$
\Phi(F_{R,T}')\subset F_{R,T}'.
$$
Second, we prove \eqref{eq52}:
By using \eqref{blow-10}-\eqref{lem44-eqn1}, for $u,v\in F_{R, T}^{'}$
\begin{eqnarray}\nonumber
\|\mathcal{B}(u,u)-\mathcal{B}(v,v)\|_{L_T^{\infty}(\mathcal{X}_{a,\sigma}^0)}
&\leq & T^{\frac{1}{2}}\,\|u\|_{L_T^{\infty}(\mathcal{X}_{a,\sigma}^0)}
\|u-v\|_{L_T^{\infty}(\mathcal{X}_{a,\sigma}^0)}^{\frac{1}{2}}\|u-v\|_{L_T^{1}(\mathcal{X}_{a,\sigma}^2)}^{\frac{1}{2}}\\\nonumber
&& + T^{\frac{1}{2}}\,\|u-v\|_{L_T^\infty(\mathcal{X}_{a,\sigma}^0)}\|v\|_{L_T^{\infty}(\mathcal{X}_{a,\sigma}^0)}^{\frac{1}{2}}\|v\|_{L_T^{1}(\mathcal{X}_{a,\sigma}^2)}^{\frac{1}{2}}\\\nonumber
&\leq& 2 T^{\frac{1}{2}}\,\|u^0\|_{\mathcal{X}_{a,\sigma}^0}\|u-v\|_{E_T^{'}}+\sqrt{2} T^{\frac{1}{2}}R^{\frac{1}{2}}\|u^0\|_{\mathcal{X}_{a,\sigma}^0}^{\frac{1}{2}} \|u-v\|_{E_T^{'}}\\\label{eq55}
&\leq &\sqrt{2}T^{\frac{1}{2}}\|u^0\|_{\mathcal{X}_{a,\sigma}^0}^{\frac{1}{2}}\left(\sqrt{2}\|u^0\|_{\mathcal{X}_{a,\sigma}^0}^{\frac{1}{2}}+R^{\frac{1}{2}}\right)\|u-v\|_{E_T^{'}}.
\end{eqnarray}
By using inequality \eqref{lem44-eqn2} , we obtain
\begin{eqnarray}\nonumber
\|\mathcal{B}(u,u)-\mathcal{B}(v,v)\|_{L_T^1(\mathcal{X}_{a,\sigma}^2)}	
&\leq & T^{\frac{1}{2}}\,\|u\|_{L_T^{\infty}(\mathcal{X}_{a,\sigma}^0)}
\|u-v\|_{L_T^{\infty}(\mathcal{X}_{a,\sigma}^0)}^{\frac{1}{2}}\|u-v\|_{L_T^{1}(\mathcal{X}_{a,\sigma}^2)}^{\frac{1}{2}}\\\nonumber
&& + T^{\frac{1}{2}}\,\|u-v\|_{L_T^\infty(\mathcal{X}_{a,\sigma}^0)}\|v\|_{L_T^{\infty}(\mathcal{X}_{a,\sigma}^0)}^{\frac{1}{2}}\|v\|_{L_T^{1}
(\mathcal{X}_{a,\sigma}^2)}^{\frac{1}{2}}\\\nonumber
&\leq & 2 T^{\frac{1}{2}}\,\|u^0\|_{\mathcal{X}_{a,\sigma}^0}\|u-v\|_{E_T^{'}}+\sqrt{2}T^{\frac{1}{2}} R^{\frac{1}{2}}\|u^0\|_{\mathcal{X}_{a,\sigma}^0}^{\frac{1}{2}} \|u-v\|_{E_T^{'}}\\\label{eq56}
&\leq &\sqrt{2} T^{\frac{1}{2}}\|u^0\|_{\mathcal{X}_{a,\sigma}^0}^{\frac{1}{2}}\left(\sqrt{2}\|u^0\|_{\mathcal{X}_{a,\sigma}^0}^{\frac{1}{2}}+R^{\frac{1}{2}}\right)\|u-v\|_{E_T^{'}}.
\end{eqnarray}
Now, using  \eqref{blow-11}-\eqref{blow-12}-\eqref{lem55-eqn1}, we get
\begin{eqnarray}\nonumber
\|\mathcal{D}_m(u)-\mathcal{D}_m(v)\|_{L_T^{\infty}(\mathcal{X}_{a,\sigma}^0)}
&\leq & T\Big[\|u\|_{L_T^{\infty}(\mathcal{X}_{a,\sigma}^0)}^{2m}+ \|u\|_{L_T^{\infty}(\mathcal{X}_{a,\sigma}^0)}^{2m-1}\|v\|_{L_T^{\infty}(\mathcal{X}_{a,\sigma}^0)}+..\\\nonumber
&&..+ \|u\|_{L_T^{\infty}(\mathcal{X}_{a,\sigma}^0)}\|v\|_{L_T^{\infty}(\mathcal{X}_{a,\sigma}^0)}^{2m-1}+\|v\|_{L_T^{\infty}(\mathcal{X}_{a,\sigma}^0)}^{2m}\Big]\|u-v\|_{L_T^{\infty}(\mathcal{X}_{a,\sigma}^0)}\\\nonumber
&\leq & (2m+1)T\Big[\|u\|_{L_T^{\infty}(\mathcal{X}_{a,\sigma}^0)}^{2m}+  \|v\|_{L_T^{\infty}(\mathcal{X}_{a,\sigma}^0)}^{2m}\Big]\|u-v\|_{L_T^{\infty}(\mathcal{X}_{a,\sigma}^0)}\\\label{eq57}
&\leq & (2m+1)2^{2m+1}T\|u^0\|_{\mathcal{X}_{a,\sigma}^0}^{2m}\|u-v\|_{E_T^{'}}.
\end{eqnarray}
By using inequalities \eqref{lem55-eqn2}, we get
\begin{eqnarray}\nonumber
\|\mathcal{D}_m(u)-\mathcal{D}_m(v)\|_{L_T^1(\mathcal{X}_{a,\sigma}^2)}
&\leq &  T\Big[\|u\|_{L_T^{\infty}(\mathcal{X}_{a,\sigma}^0)}^{2m}+ \|u\|_{L_T^{\infty}(\mathcal{X}_{a,\sigma}^0)}^{2m-1}\|v\|_{L_T^{\infty}(\mathcal{X}_{a,\sigma}^0)}+...\\\nonumber
&&...+ \|u\|_{L_T^{\infty}(\mathcal{X}_{a,\sigma}^0)}\|v\|_{L_T^{\infty}(\mathcal{X}_{a,\sigma}^0)}^{2m-1}+\|v\|_{L_T^{\infty}(\mathcal{X}_{a,\sigma}^0)}^{2m}\Big]\|u-v\|_{L_T^{\infty}(\mathcal{X}_{a,\sigma}^0)}\\\nonumber
&\le & (2m+1)T\Big[\|u\|_{L_T^{\infty}
(\mathcal{X}_{a,\sigma}^0)}^{2m}+\|v\|_{L_T^{\infty}
(\mathcal{X}_{a,\sigma}^0)}^{2m}\Big]\|u-v\|_{L_T^{\infty}(\mathcal{X}_{a,\sigma}^0)}\\\label{eq58}
&\leq & (2m+1)2^{2m+1}T\|u^0\|_{\mathcal{X}_{a,\sigma}^0}^{2m}\|u-v\|_{E_T^{'}}
\end{eqnarray}
Finally, by decreasing $T>0$ if necessary, while keeping
$$
T\leq \min\{T_1,T_2,T_3\},
$$
we may assume that
\begin{eqnarray*}
&&
\sqrt{2}T^{\frac12}
\|u^0\|_{\mathcal{X}_{a,\sigma}^0}^{\frac12}
\left(
\sqrt{2}\|u^0\|_{\mathcal{X}_{a,\sigma}^0}^{\frac12}
+R^{\frac12}
\right)
\leq \frac18,
\\
&&
(2m+1)2^{2m+1}\alpha T
\|u^0\|_{\mathcal{X}_{a,\sigma}^0}^{2m}
\leq \frac18.
\end{eqnarray*}
Combining \eqref{eq55}--\eqref{eq58}, we obtain
$$
\|\Phi(u)-\Phi(v)\|_{E_T'}\leq \frac12\|u-v\|_{E_T'}.
$$
Hence, $\Phi$ is a contraction on $F_{R,T}'$. Therefore, the Banach fixed-point theorem yields
a unique local solution to $(NSE_M)$ in
$$
\mathcal{C}_{T}
\bigl(\mathcal{X}_{a,\sigma}^{0}(\mathbb{R}^{3})\bigr)
\cap
L^{1}_{T}
\bigl(\mathcal{X}_{a,\sigma}^{2}(\mathbb{R}^{3})\bigr).
$$
This completes the proof of Theorem~\ref{theo3}.
\section{\bf Proof of Theorem \ref{theo4}}
\subsection{Proof of \eqref{theo4-eqn1}}\pn
Let $u\in \mathcal{C}([0,T^*),\X_{a,\sigma}^{0}(\mathbb{R}^{3}))\cap L^{1}_{loc}([0,T^*),\X_{a,
\sigma}^{2}(\mathbb{R}^{3}))$ be the maximal solution of problem $(NSE_M)$ given by Theorem
\ref{theo3}. From the first equation of the system $(NSE_M)$, one has
\begin{eqnarray*}
&& \|u(t)\|_{\mathcal{X}_{a,\sigma}^0}+\int_0^t\|u(z)\|_{\mathcal{X}_{a,\sigma}^2}dz\\
&& \quad \leq  \|u^0\|_{\mathcal{X}_{a,\sigma}^0}+\int_0^t\|u(z)\|_{\mathcal{X}_{a,\sigma}
 ^0}\|u(z)\|_{\mathcal{X}_{a,\sigma}^1}dz+\alpha\int_0^t\|u(z)\|_{\mathcal{X}_{a,\sigma}^0}
 ^{2m+1}dz.	
\end{eqnarray*}
By using inequality \eqref{lem1-eqn2}, we get
\begin{eqnarray*}
&& \|u(t)\|_{\mathcal{X}_{a,\sigma}^0}+\int_0^t\|u(z)\|_{\mathcal{X}_{a,\sigma}^2}dz\\
&& \quad \leq  \|u^0\|_{\mathcal{X}_{a,\sigma}^0}+\int_0^t\|u(z)\|_{\mathcal{X}_{a,\sigma}^0}^{\frac{3}{2}}\|u(z)\|_{\mathcal{X}_{a,\sigma}^2}^{\frac{1}{2}}dz+\alpha\int_0^t\|u(z)\|_{\mathcal{X}_{a,\sigma}^0}^{2m+1}dz\\	
\end{eqnarray*}
The elementary inequality
$$
xy\leq \frac{x^2}{2}+\frac{y^2}{2},\;\forall x,y\geq0,
$$
implies that
\begin{eqnarray*}
\|u(t)\|_{\mathcal{X}_{a,\sigma}^0}+\frac{1}{2}\int_0^t\|u(z)\|_{\mathcal{X}_{a,\sigma}^2}dz
&\leq &\|u^0\|_{\mathcal{X}_{a,\sigma}^0}+\frac{1}{2}\int_0^t\|u(z)\|_{\mathcal{X}_{a,\sigma}^0}^3dz+\alpha\int_0^t\|u(z)\|_{\mathcal{X}_{a,\sigma}^0}^{2m+1}dz\\
&\leq &\|u^0\|_{\mathcal{X}_{a,\sigma}^0}+ \int_0^t\left(\frac{1}{2}\|u(z)\|_{\mathcal{X}_{a,\sigma}^0}^2+\alpha \|u(z)\|_{\mathcal{X}_{a,\sigma}^0}^{2m}\right)\|u(z)\|_{\mathcal{X}_{a,\sigma}^0}dz.\\
\end{eqnarray*}
By Gr\"onwall Lemma, we obtain estimate \eqref{theo4-eqn1}.
\subsection{Proof of \eqref{theo4-eqn2}}\pn
We prove that
$$
\limsup_{t\rightarrow T^*}
\|u(t)\|_{\mathcal{X}_{a,\sigma}^0}
=+\infty.
$$
Suppose, by contradiction, that $u$ is bounded in $\mathcal{X}_{a,\sigma}^0(\mathbb{R}^3)$, and
set
\begin{equation}\label{prooftheo4-eq30}
M_*=\sup_{t\in[0,T^*)}
\|u(t)\|_{\mathcal{X}_{a,\sigma}^0}<\infty.
\end{equation}
Firstly, applying Fourier transform and taking the scalar product in $\mathbb{R}^3$ of the first equation with $ \widehat{u}(t)=\mathcal{F}\left(\sum_{k=1}^3u_ke_k\right)$, we obtain
\begin{eqnarray}\nonumber
&&\frac{1}{2}\partial_t|\widehat{u}(t,\xi)|^2+|\xi|^2|\widehat{u}(t,\xi)|^2\\\label{prooftheo4-eq31}
&&\quad +\mathbin{Re}\left(\mathcal{F}(u\cdot\nabla u)\cdot \widehat{u}(t,-\xi)\right)+\alpha \left(\mathcal{F}\left(\sum_{k=1}^3u_k^{2m+1}e_k\right)\cdot \mathcal{F}\left(\sum_{k=1}^3u_ke_k\right)(t,-\xi)\right)=0
\end{eqnarray}
For any arbitrary $\varepsilon >0$
\begin{eqnarray}\nonumber
\frac{1}{2}\partial_t|\widehat{u}(t,\xi)|^2&=&\frac{1}{2}\partial_t(|\widehat{u}(t,\xi)|^2+\varepsilon)\\\label{prooftheo4-eq32}
 &=&  \sqrt{(|\widehat{u}(t,\xi)|^2+\varepsilon)	}\cdot  \partial_t\sqrt{|\widehat{u}(t,\xi)|^2+\varepsilon}.
\end{eqnarray}
Substituting \eqref{prooftheo4-eq32} in  \eqref{prooftheo4-eq31}, we get
\begin{eqnarray*}
&&\partial_t\sqrt{(|\widehat{u}(t,\xi)|^2+\varepsilon)	}+|\xi|^2\frac{|\widehat{u}(t,\xi)|^2}{\sqrt{|\widehat{u}(t,\xi)|^2+\varepsilon}}	\\
&&\quad + \frac{\mathbin{Re}	\left(\mathcal{F}(u\cdot\nabla u)\cdot \widehat{u}(t,-\xi)\right)}{\sqrt{|\widehat{u}(t,\xi)|^2+\varepsilon}}+ \alpha\frac{  \left(\mathcal{F}\left(\sum_{k=1}^3u_k^{2m+1}e_k\right)\cdot \mathcal{F}\left(\sum_{k=1}^3u_ke_k\right)(t,-\xi)\right)}{\sqrt{|\widehat{u}(t,\xi)|^2+\varepsilon}}=0
\end{eqnarray*}
This implies
\begin{eqnarray*}
&&\partial_t\sqrt{(|\widehat{u}(t,\xi)|^2+\varepsilon)	}+|\xi|^2\frac{|\widehat{u}(t,\xi)|^2}{\sqrt{|\widehat{u}(t,\xi)|^2+\varepsilon}}	\\
&&\quad \leq  |\mathcal{F}(u\cdot\nabla u)(t,\xi)| +\alpha |\mathcal{F}(\sum_{k=1}^3u_k^{2m+1}e_k)(t,\xi)|.
\end{eqnarray*}
Integrating with respect to time and letting $\varepsilon\to0^+$, we obtain
$$
|\widehat{u}(t,\xi)|
+\int_0^t |\xi|^2|\widehat{u}(\tau,\xi)|\,d\tau
\leq
|\widehat{u^0}(\xi)|
+\int_0^t
|\mathcal{F}(u\cdot\nabla u)(\tau,\xi)|\,d\tau
+\alpha\int_0^t
\left|
\mathcal{F}\left(
\sum_{k=1}^3u_k^{2m+1}e_k
\right)(\tau,\xi)
\right|\,d\tau.
$$
Multiplying by $e^{a|\xi|^{\frac{1}{\sigma}}}$, we obtain
\begin{eqnarray}\nonumber
&&  \|u(t)\|_{\mathcal{X}_{a,\sigma}^0}+\int_0^t \|\Delta u(\tau)\|_{\mathcal{X}_{a,\sigma}^0}d\tau\\\label{prooftheo4-eq33}
&&\quad \leq   \|u^0\|_{\mathcal{X}_{a,\sigma}^0}+\int_0^t\|u\cdot\nabla u\|_{\mathcal{X}_{a,\sigma}^0}d\tau  +\alpha \int_0^t \|u^{2m+1}\|_{\mathcal{X}_{a,\sigma}^0}d\tau.
\end{eqnarray}
Using inequalities \eqref{lem1-eqn2}-\eqref{lem33-eqn2}, we get
\begin{eqnarray*}
&&  \|u(t)\|_{\mathcal{X}_{a,\sigma}^0}+\int_0^t \|\Delta u(\tau)\|_{\mathcal{X}_{a,\sigma}^0}d\tau\\
&&\quad \leq   \|u^0\|_{\mathcal{X}_{a,\sigma}^0}+\int_0^t\|u\cdot\nabla u\|_{\mathcal{X}_{a,\sigma}^0}d\tau  +\alpha \int_0^t \|u^{2m+1}\|_{\mathcal{X}_{a,\sigma}^0}d\tau\\
&&\quad \leq   \|u^0\|_{\mathcal{X}_{a,\sigma}^0}+\int_0^t\|u\|_{\mathcal{X}_{a,\sigma}^0}
\|\nabla u\|_{\mathcal{X}_{a,\sigma}^0} d\tau  +\alpha \int_0^t \|u^{2m+1}\|_{\mathcal{X}_{a,\sigma}^0}d\tau\\
&&\quad \leq   \|u^0\|_{\mathcal{X}_{a,\sigma}^0}+\int_0^t\|u\|_{\mathcal{X}_{a,\sigma}^0}^{\frac{3}{2}}
\|\Delta  u\|_{\mathcal{X}_{a,\sigma}^0}^{\frac{1}{2}} d\tau  +2^{2m}\alpha \int_0^t \|u \|_{\mathcal{X}_{a,\sigma}^0}^{2m+1}d\tau\\
\end{eqnarray*}
By Young's inequality, we obtain
\begin{eqnarray*}
&&  \|u(t)\|_{\mathcal{X}_{a,\sigma}^0}
+\frac{1}{2}\int_0^t
\|\Delta u(\tau)\|_{\mathcal{X}_{a,\sigma}^0}\,d\tau
\\
&&\quad \leq
\|u^0\|_{\mathcal{X}_{a,\sigma}^0}
+ \int_0^t
\left(
\frac{1}{2}\|u(\tau)\|_{\mathcal{X}_{a,\sigma}^0}^3
+2^{2m}\alpha
\|u(\tau)\|_{\mathcal{X}_{a,\sigma}^0}^{2m+1}
\right)d\tau .
\end{eqnarray*}
Using \eqref{prooftheo4-eq30}, we deduce that, for every \(t<T^*\),
$$
\frac{1}{2}\int_0^t
\|\Delta u(\tau)\|_{\mathcal{X}_{a,\sigma}^0}\,d\tau
\leq
\|u^0\|_{\mathcal{X}_{a,\sigma}^0}
+
\left(
\frac{1}{2}M_*^3
+2^{2m}\alpha M_*^{2m+1}
\right)t .
$$
Hence,
\begin{eqnarray}\label{prooftheo4-eq34}
\int_0^{T^*}
\|\Delta u(\tau)\|_{\mathcal{X}_{a,\sigma}^0}\,d\tau
&\leq&
2\|u^0\|_{\mathcal{X}_{a,\sigma}^0}
+
\left(
M_*^3
+2^{2m+1}\alpha M_*^{2m+1}
\right)T^*
=:C_* .
\end{eqnarray}
Secondly, let $(t_p)_{p\geq1}\subset [0,T^*)$
be such that
$$
\lim_{p\rightarrow\infty}t_p=T^*.
$$
We prove that $u$ satisfies the Cauchy property at $T^*$.
Using Duhamel's formula, for $p,q\in\mathbb{N}$ with $t_p<t_q$, we have
$$
\begin{array}{lcl}
u(t_p)&=&e^{t_p\Delta}u^0-\mathcal{B}(u,u)(t_p)-\alpha\mathcal{D}_m(u)(t_p),\\
u(t_q)&=&e^{t_q\Delta}u^0-\mathcal{B}(u,u)(t_q)-\alpha\mathcal{D}_m(u)(t_q).
\end{array}
$$
Therefore,
$$
u(t_p)-u(t_q)=\sum_{k=1}^5 I_{p,q}^{(k)},
$$
where
$$
\begin{array}{lcl}
I_{p,q}^{(1)}
&=&
(e^{t_p\Delta}-e^{t_q\Delta})u^0,
\\[1mm]
I_{p,q}^{(2)}
&=&
\displaystyle\int_0^{t_p}
\big(e^{(t_q-z)\Delta}-e^{(t_p-z)\Delta}\big)
\mathbb{P}(u\cdot \nabla u)(z)\,dz,
\\[2mm]
I_{p,q}^{(3)}
&=&
\displaystyle \int_{t_p}^{t_q}
e^{(t_q-z)\Delta}\mathbb{P}(u\cdot \nabla u)(z)\,dz,
\\[2mm]
I_{p,q}^{(4)}
&=&
\displaystyle\alpha \int_0^{t_p}
\big(e^{(t_q-z)\Delta}-e^{(t_p-z)\Delta}\big)
\mathbb{P}\left(\sum_{k=1}^3u_k^{2m+1}e_k\right)(z)\,dz,
\\[2mm]
I_{p,q}^{(5)}
&=&
\displaystyle\alpha \int_{t_p}^{t_q}
e^{(t_q-z)\Delta}
\mathbb{P}\left(\sum_{k=1}^3u_k^{2m+1}e_k\right)(z)\,dz.
\end{array}
$$
\begin{enumerate}
\item[$\bullet$] Study of $I_{p,q}^{(1)}$ in $\mathcal{X}_{a,\sigma}^0(\mathbb{R}^3)$. We have
$$
\begin{array}{lcl}
\|I_{p,q}^{(1)}\|_{\mathcal{X}_{a,\sigma}^0}
&=&
\displaystyle\int_{\mathbb{R}^3}
\left|e^{-t_p|\xi|^2}-e^{-t_q|\xi|^2}\right|
e^{a|\xi|^{\frac{1}{\sigma}}}
|\widehat{u^0}(\xi)|\,d\xi
\\[2mm]
&=&
\displaystyle\int_{\mathbb{R}^3}
e^{-t_p|\xi|^2}
\left|1-e^{-(t_q-t_p)|\xi|^2}\right|
e^{a|\xi|^{\frac{1}{\sigma}}}
|\widehat{u^0}(\xi)|\,d\xi
\\[2mm]
&\leq&
\displaystyle\int_{\mathbb{R}^3}
\left|1-e^{-(t_q-t_p)|\xi|^2}\right|
e^{a|\xi|^{\frac{1}{\sigma}}}
|\widehat{u^0}(\xi)|\,d\xi.
\end{array}
$$
Since $t_q-t_p\to0^+$ as $p,q\to\infty$, the dominated convergence theorem gives
$$
\lim_{p,q\to\infty}
\|I_{p,q}^{(1)}\|_{\mathcal{X}_{a,\sigma}^0}=0.
$$
\item[$\bullet$] Study of $I_{p,q}^{(2)}$ in $\mathcal{X}_{a,\sigma}^0(\mathbb{R}^3)$. Using the fact
that $\operatorname{div}u=0$, we write
$$
I_{p,q}^{(2)}
=
-\int_0^{t_p}
\big(e^{(t_q-z)\Delta}-e^{(t_p-z)\Delta}\big)
\mathbb{P}\operatorname{div}(u\otimes u)(z)\,dz.
$$
Then
$$
\begin{array}{lcl}
\|I_{p,q}^{(2)}\|_{\mathcal{X}_{a,\sigma}^0}
&\leq&
\displaystyle
\int_{\mathbb{R}^3}\int_0^{t_p}
\left|e^{-(t_q-z)|\xi|^2}-e^{-(t_p-z)|\xi|^2}\right|
e^{a|\xi|^{\frac{1}{\sigma}}}
|\mathcal F(u\cdot\nabla u)(z,\xi)|\,dz\,d\xi
\\[2mm]
&=&
\displaystyle
\int_{\mathbb{R}^3}\int_0^{t_p}
\left|e^{-(t_q-t_p)|\xi|^2}-1\right|
e^{-(t_p-z)|\xi|^2}
e^{a|\xi|^{\frac{1}{\sigma}}}
|\mathcal F(u\cdot\nabla u)(z,\xi)|\,dz\,d\xi
\\[2mm]
&\leq&
\displaystyle
\int_0^{T^*}\int_{\mathbb{R}^3}
\left|e^{-(t_q-t_p)|\xi|^2}-1\right|
e^{a|\xi|^{\frac{1}{\sigma}}}
|\mathcal F(u\cdot\nabla u)(z,\xi)|\,d\xi\,dz .
\end{array}
$$
Since $t_q-t_p\to0^+$ as $p,q\to\infty$, and
$$
\left|e^{-(t_q-t_p)|\xi|^2}-1\right|
e^{a|\xi|^{\frac{1}{\sigma}}}
|\mathcal F(u\cdot\nabla u)(z,\xi)|
\leq
e^{a|\xi|^{\frac{1}{\sigma}}}
|\mathcal F(u\cdot\nabla u)(z,\xi)|,
$$
Moreover, by \eqref{lem1-eqn2}, \eqref{prooftheo4-eq30},
and \eqref{prooftheo4-eq34},
$$
\begin{aligned}
\int_0^{T^*}
\|u\cdot\nabla u\|_{\mathcal{X}_{a,\sigma}^0}\,dz
&\leq
\int_0^{T^*}
\|u(z)\|_{\mathcal{X}_{a,\sigma}^0}^{\frac32}
\|u(z)\|_{\mathcal{X}_{a,\sigma}^2}^{\frac12}\,dz
\\
&\leq
M_*^{\frac32}(T^*)^{\frac12}C_*^{\frac12}
<\infty.
\end{aligned}
$$
the dominated convergence theorem yields
$$
\lim_{p,q\to\infty}
\|I_{p,q}^{(2)}\|_{\mathcal{X}_{a,\sigma}^0}=0.
$$
\item[$\bullet$] Study of $I_{p,q}^{(3)}$ in $\mathcal{X}_{a,\sigma}^0(\mathbb{R}^3)$. By inequalities \eqref{lem1-eqn2}-\eqref{prooftheo4-eq30}-\eqref{prooftheo4-eq34}, we have
$$
\begin{array}{lcl}
\|I_{p,q}^{(3)}\|_{\mathcal{X}_{a,\sigma}^0}
&\leq&
\displaystyle
\int_{\mathbb{R}^3}\int_{t_p}^{t_q}
e^{-(t_q-z)|\xi|^2}
e^{a|\xi|^{\frac{1}{\sigma}}}
|\mathcal F(u\cdot \nabla u)(z,\xi)|\,dz\,d\xi
\\[2mm]
&\leq&
\displaystyle
\int_{t_p}^{t_q}
\int_{\mathbb{R}^3}
e^{a|\xi|^{\frac{1}{\sigma}}}
|\mathcal F(u\cdot \nabla u)(z,\xi)|\,d\xi\,dz
\\[2mm]
&\leq&
\displaystyle
\int_{t_p}^{t_q}
\|(u\otimes u)(z)\|_{\mathcal{X}_{a,\sigma}^{1}}\,dz
\\[2mm]
&\leq&
\displaystyle
\int_{t_p}^{t_q}
\|u(z)\|_{\mathcal{X}_{a,\sigma}^{0}}
\|u(z)\|_{\mathcal{X}_{a,\sigma}^{1}}\,dz
\\[2mm]
&\leq&
\displaystyle
\int_{t_p}^{T^*}
\|u(z)\|_{\mathcal{X}_{a,\sigma}^{0}}
\|u(z)\|_{\mathcal{X}_{a,\sigma}^{1}}\,dz
\\[2mm]
&\leq&
\displaystyle
\int_{t_p}^{T^*}
\|u(z)\|_{\mathcal{X}_{a,\sigma}^{0}}^{\frac32}
\|u(z)\|_{\mathcal{X}_{a,\sigma}^{2}}^{\frac12}\,dz
\\[2mm]
&\leq&
M_*^{\frac32}
\int_{t_p}^{T^*}
\|u(z)\|_{\mathcal{X}_{a,\sigma}^{2}}^{\frac12}\,dz
\\[2mm]
&\leq&
M_*^{\frac32}\sqrt{C_*(T^*-t_p)}.
\end{array}
$$
Since $t_p\to T^*$ as $p\to\infty$, it follows that
$$
\lim_{p,q\to\infty}
\|I_{p,q}^{(3)}\|_{\mathcal{X}_{a,\sigma}^{0}}
=0.
$$
\item[$\bullet$] Study of $I_{p,q}^{(4)}$ in $\mathcal{X}_{a,\sigma}^0(\mathbb{R}^3)$. We have
$$
\begin{array}{lcl}
\|I_{p,q}^{(4)}\|_{\mathcal{X}_{a,\sigma}^0}
&\leq&
\displaystyle \alpha \int_{\mathbb{R}^3}\int_0^{t_p}
\left|e^{-(t_q-z)|\xi|^2}-e^{-(t_p-z)|\xi|^2}\right|
e^{a|\xi|^{\frac{1}{\sigma}}}
\left|\mathcal{F}\left(\sum_{k=1}^3u_k^{2m+1}e_k\right)(z,\xi)\right|
\,dz\,d\xi
\\[2mm]
&=&
\displaystyle \alpha \int_{\mathbb{R}^3}\int_0^{t_p}
\left|e^{-(t_q-t_p)|\xi|^2}-1\right|
e^{-(t_p-z)|\xi|^2}
e^{a|\xi|^{\frac{1}{\sigma}}}
\left|\mathcal{F}\left(\sum_{k=1}^3u_k^{2m+1}e_k\right)(z,\xi)\right|
\,dz\,d\xi
\\[2mm]
&\leq&
\displaystyle \alpha \int_0^{T^*}\int_{\mathbb{R}^3}
\left|e^{-(t_q-t_p)|\xi|^2}-1\right|
e^{a|\xi|^{\frac{1}{\sigma}}}
\left|\mathcal{F}\left(\sum_{k=1}^3u_k^{2m+1}e_k\right)(z,\xi)\right|
\,d\xi\,dz .
\end{array}
$$
Since $t_q-t_p\to0^+$ as $p,q\to\infty$, and
$$
\left|e^{-(t_q-t_p)|\xi|^2}-1\right|
e^{a|\xi|^{\frac{1}{\sigma}}}
\left|\mathcal{F}\left(\sum_{k=1}^3u_k^{2m+1}e_k\right)(z,\xi)\right|
\leq
e^{a|\xi|^{\frac{1}{\sigma}}}
\left|\mathcal{F}\left(\sum_{k=1}^3u_k^{2m+1}e_k\right)(z,\xi)\right|,
$$
and
$$
\sum_{k=1}^3u_k^{2m+1}e_k
\in
L^1\big([0,T^*);\mathcal{X}_{a,\sigma}^0(\mathbb{R}^3)\big),
$$
the dominated convergence theorem yields
$$
\lim_{p,q\to\infty}
\|I_{p,q}^{(4)}\|_{\mathcal{X}_{a,\sigma}^0}=0.
$$
\item[$\bullet$] Study of $I_{p,q}^{(5)}$ in $\mathcal{X}_{a,\sigma}^0(\mathbb{R}^3)$. By inequalities \eqref{lem55-eqn3}-\eqref{prooftheo4-eq30}, we have
$$
\begin{array}{lcl}
\|I_{p,q}^{(5)}\|_{\mathcal{X}_{a,\sigma}^0}
&\leq&
\displaystyle \alpha \int_{\mathbb{R}^3}\int_{t_p}^{t_q}
e^{-(t_q-z)|\xi|^2}
e^{a|\xi|^{\frac{1}{\sigma}}}
\left|\mathcal{F}\left(\sum_{k=1}^3u_k^{2m+1}e_k\right)(z,\xi)\right|
\,dz\,d\xi
\\[2mm]
&\leq&
\displaystyle \alpha\int_{t_p}^{t_q}
\int_{\mathbb{R}^3}
e^{a|\xi|^{\frac{1}{\sigma}}}
\left|\mathcal{F}\left(\sum_{k=1}^3u_k^{2m+1}e_k\right)(z,\xi)\right|
\,d\xi\,dz
\\[2mm]
&\leq&
\displaystyle \alpha\int_{t_p}^{t_q}
\|u(z)\|_{\mathcal{X}_{a,\sigma}^0}^{2m+1}\,dz
\\[2mm]
&\leq&
\displaystyle \alpha M_*^{2m+1}(t_q-t_p).
\end{array}
$$
Since $t_q-t_p\to 0^+$ as $p,q\to\infty$, we obtain
$$
\lim_{p,q\to\infty}
\|I_{p,q}^{(5)}\|_{\mathcal{X}_{a,\sigma}^0}=0.
$$
\end{enumerate}
Finally, by the previous five estimates, $(u(t))_{t<T^*}$ is a Cauchy family in the Banach
space $\mathcal{X}_{a,\sigma}^0(\mathbb{R}^3)$. Hence, there exists a unique
$u^*\in\mathcal{X}_{a,\sigma}^0(\mathbb{R}^3)$ such that
$$
\lim_{t\to T^*}u(t)=u^*
\quad\text{in }\mathcal{X}_{a,\sigma}^0(\mathbb{R}^3).
$$
Now, consider $(NSE_M)$ with initial datum $ v(0)=u^*$. By Theorem~\ref{theo3}, there exists
$T_0=T_0(u^*)>0$ and a unique solution
$$
v\in C([0,T_0];\mathcal{X}_{a,\sigma}^0(\mathbb{R}^3))
\cap L^1([0,T_0];\mathcal{X}_{a,\sigma}^2(\mathbb{R}^3)).
$$
Define
$$
w(t)=
\begin{cases}
u(t), & \text{if } t\in[0,T^*),\\
v(t-T^*), & \text{if } t\in[T^*,T^*+T_0).
\end{cases}
$$
Then $w$ is a solution of $(NSE_M)$ on $[0,T^*+T_0)$, which extends the maximal solution $u$ beyond $T^*$. This contradiction shows that $u$ cannot remain bounded in $\mathcal{X}_{a,\sigma}^0(\mathbb{R}^3)$. Therefore,
$$
\limsup_{t\to T^*}
\|u(t)\|_{\mathcal{X}_{a,\sigma}^0}
=+\infty.
$$
\subsection{Proof of \eqref{theo4-eqn3}}\pn
From inequality \eqref{prooftheo4-eq33}, we have
\begin{eqnarray}\nonumber
&&\|u(t)\|_{\mathcal{X}_{a,\sigma}^0}
+\int_0^t
\|\Delta u(\tau)\|_{\mathcal{X}_{a,\sigma}^0}\,d\tau
\\ \label{prooftheo4-eq43}
&&\quad \leq
\|u^0\|_{\mathcal{X}_{a,\sigma}^0}
+\int_0^t
\|u\cdot\nabla u\|_{\mathcal{X}_{a,\sigma}^0}\,d\tau
+\alpha\int_0^t
\left\|\sum_{j=1}^3u_j^{2m+1}e_j\right\|_{\mathcal{X}_{a,\sigma}^0}\,d\tau .
\end{eqnarray}
Using Lemma~\ref{lem3} with $s=0$, together with \eqref{lem1-eqn2}, we obtain
$$
\begin{array}{lcl}
\|u\cdot\nabla u\|_{\mathcal{X}_{a,\sigma}^0}
&\leq&
4\|u\|_{\mathcal{X}_{\frac{a}{\sigma},\sigma}^0}
\|u\|_{\mathcal{X}_{a,\sigma}^1}
\\[1mm]
&\leq&
4\|u\|_{\mathcal{X}_{\frac{a}{\sigma},\sigma}^0}
\|u\|_{\mathcal{X}_{a,\sigma}^0}^{\frac12}
\|u\|_{\mathcal{X}_{a,\sigma}^2}^{\frac12}.
\end{array}
$$
Moreover, by \eqref{lem33-eqn1}, we have
$$
\left\|\sum_{j=1}^3u_j^{2m+1}e_j\right\|_{\mathcal{X}_{a,\sigma}^0}
\leq
2^{2m}
\|u\|_{\mathcal{X}_{a,\sigma}^0}
\|u\|_{\mathcal{X}_{\frac{a}{\sigma},\sigma}^0}^{2m}.
$$
Therefore, \eqref{prooftheo4-eq43} gives
$$
\begin{array}{lcl}
&&\|u(t)\|_{\mathcal{X}_{a,\sigma}^0}
+\displaystyle\int_0^t
\|\Delta u(\tau)\|_{\mathcal{X}_{a,\sigma}^0}\,d\tau
\\[2mm]
&&\quad \leq
\|u^0\|_{\mathcal{X}_{a,\sigma}^0}
+
4\displaystyle\int_0^t
\|u\|_{\mathcal{X}_{\frac{a}{\sigma},\sigma}^0}
\|u\|_{\mathcal{X}_{a,\sigma}^0}^{\frac12}
\|u\|_{\mathcal{X}_{a,\sigma}^2}^{\frac12}\,d\tau
\\[2mm]
&&\qquad
+
2^{2m}\alpha
\displaystyle\int_0^t
\|u\|_{\mathcal{X}_{a,\sigma}^0}
\|u\|_{\mathcal{X}_{\frac{a}{\sigma},\sigma}^0}^{2m}\,d\tau .
\end{array}
$$
Applying Young's inequality $xy\leq \frac{x^2}{2}+\frac{y^2}{2}$, we obtain
$$
\begin{array}{lcl}
&&\|u(t)\|_{\mathcal{X}_{a,\sigma}^0}
+\displaystyle\frac12\int_0^t
\|\Delta u(\tau)\|_{\mathcal{X}_{a,\sigma}^0}\,d\tau
\\[2mm]
&&\quad \leq
\|u^0\|_{\mathcal{X}_{a,\sigma}^0}
+
8\displaystyle\int_0^t
\|u\|_{\mathcal{X}_{\frac{a}{\sigma},\sigma}^0}^{2}
\|u\|_{\mathcal{X}_{a,\sigma}^0}\,d\tau
\\[2mm]
&&\qquad
+
2^{2m}\alpha
\displaystyle\int_0^t
\|u\|_{\mathcal{X}_{a,\sigma}^0}
\|u\|_{\mathcal{X}_{\frac{a}{\sigma},\sigma}^0}^{2m}\,d\tau
\\[2mm]
&&\quad \leq
\|u^0\|_{\mathcal{X}_{a,\sigma}^0}
+
C_{m,\alpha}
\displaystyle\int_0^t
\|u\|_{\mathcal{X}_{a,\sigma}^0}
\left(
\|u\|_{\mathcal{X}_{\frac{a}{\sigma},\sigma}^0}^{2}
+
\|u\|_{\mathcal{X}_{\frac{a}{\sigma},\sigma}^0}^{2m}
\right)d\tau .
\end{array}
$$
By Gronwall's lemma, we deduce that
$$
\|u(t)\|_{\mathcal{X}_{a,\sigma}^0}
\leq
\|u^0\|_{\mathcal{X}_{a,\sigma}^0}
\exp\left(
C_{m,\alpha}
\int_0^t
\left(
\|u(\tau)\|_{\mathcal{X}_{\frac{a}{\sigma},\sigma}^0}^{2}
+
\|u(\tau)\|_{\mathcal{X}_{\frac{a}{\sigma},\sigma}^0}^{2m}
\right)d\tau
\right).
$$
Assume, by contradiction, that
$$
\int_0^{T^*}
\|u(\tau)\|_{\mathcal{X}_{\frac{a}{\sigma},\sigma}^0}^{2m}\,d\tau
<\infty.
$$
Since $m\geq 1$, we have
$$
\|u(\tau)\|_{\mathcal{X}_{\frac{a}{\sigma},\sigma}^0}^{2}
\leq
1+
\|u(\tau)\|_{\mathcal{X}_{\frac{a}{\sigma},\sigma}^0}^{2m}.
$$
Hence,
$$
\int_0^{T^*}
\left(
\|u(\tau)\|_{\mathcal{X}_{\frac{a}{\sigma},\sigma}^0}^{2}
+
\|u(\tau)\|_{\mathcal{X}_{\frac{a}{\sigma},\sigma}^0}^{2m}
\right)d\tau
<\infty.
$$
Consequently,
$$
\sup_{0\leq t<T^*}
\|u(t)\|_{\mathcal{X}_{a,\sigma}^0}
<\infty,
$$
which contradicts \eqref{theo4-eqn2}. Therefore,
$$
\int_0^{T^*}
\|u(\tau)\|_{\mathcal{X}_{\frac{a}{\sigma},\sigma}^0}^{2m}\,d\tau
=+\infty.
$$
This proves \eqref{theo4-eqn3}.
\subsection{Proof of \eqref{theo4-eqn4}}\pn
From \eqref{theo4-eqn3}, we have
\begin{equation}\label{prooftheo4-eqn51}
\int_0^{T^*}
\|u(\tau)\|_{\mathcal{X}_{\frac{a}{\sigma},\sigma}^0}^{2m}\,d\tau
=+\infty.
\end{equation}
In particular,
$$
\limsup_{t\nearrow T^*}
\|u(t)\|_{\mathcal{X}_{\frac{a}{\sigma},\sigma}^0}
=+\infty.
$$
Let $a'=\frac{a}{\sigma}\in(0,a)$. Since
$$
\mathcal{X}_{a,\sigma}^0
\hookrightarrow
\mathcal{X}_{a',\sigma}^0,
$$
we have
$$
u\in C\big([0,T^*),\mathcal{X}_{a',\sigma}^0(\mathbb{R}^3)\big).
$$
Repeating the estimates of the previous subsection with $a'$ instead of $a$, we obtain
$$
\|u(t)\|_{\mathcal{X}_{a',\sigma}^0}
\leq
\|u^0\|_{\mathcal{X}_{a',\sigma}^0}
\exp\left(
C_{m,\alpha}T^*
+
C_{m,\alpha}
\int_0^t
\|u(\tau)\|_{\mathcal{X}_{\frac{a'}{\sigma},\sigma}^0}^{2m}
\,d\tau
\right).
$$
Since
$$
\mathcal{X}_{a',\sigma}^0
\hookrightarrow
\mathcal{X}_{\frac{a'}{\sigma},\sigma}^0,
$$
we get
$$
\|u(\tau)\|_{\mathcal{X}_{\frac{a'}{\sigma},\sigma}^0}
\leq
\|u(\tau)\|_{\mathcal{X}_{a',\sigma}^0}.
$$
Hence,
$$
\|u(t)\|_{\mathcal{X}_{a',\sigma}^0}
\leq
\|u^0\|_{\mathcal{X}_{a',\sigma}^0}
\exp\left(
C_{m,\alpha}T^*
+
C_{m,\alpha}
\int_0^t
\|u(\tau)\|_{\mathcal{X}_{a',\sigma}^0}^{2m}
\,d\tau
\right).
$$
Consequently,
$$
\|u(t)\|_{\mathcal{X}_{a',\sigma}^0}^{2m}
\leq
\|u^0\|_{\mathcal{X}_{a',\sigma}^0}^{2m}
\exp\left(
2mC_{m,\alpha}T^*
+
2mC_{m,\alpha}
\int_0^t
\|u(\tau)\|_{\mathcal{X}_{a',\sigma}^0}^{2m}
\,d\tau
\right)
$$
where $C_{m,\alpha}>0$ denotes a generic constant depending only on $m$ and $\alpha$, whose value may change from line to line.
Set
$$
Y(t)=\int_0^t
\|u(\tau)\|_{\mathcal{X}_{a',\sigma}^0}^{2m}\,d\tau.
$$
Then
$$
Y'(t)
\leq
\|u^0\|_{\mathcal{X}_{a',\sigma}^0}^{2m}
e^{2mC_{m,\alpha}T^*}
e^{2mC_{m,\alpha}Y(t)}.
$$
Equivalently,
$$
Y'(t)e^{-2mC_{m,\alpha}Y(t)}
\leq
\|u^0\|_{\mathcal{X}_{a',\sigma}^0}^{2m}
e^{2mC_{m,\alpha}T^*}.
$$
Integrating over \([0,t]\), we obtain
$$
\frac{1-e^{-2mC_{m,\alpha}Y(t)}}{2mC_{m,\alpha}}
\leq
t\,
\|u^0\|_{\mathcal{X}_{a',\sigma}^0}^{2m}
e^{2mC_{m,\alpha}T^*}.
$$
Letting $t\nearrow T^*$ and using \eqref{prooftheo4-eqn51}, we deduce
$$
1
\leq
2mC_{m,\alpha}T^*
e^{2mC_{m,\alpha}T^*}
\|u^0\|_{\mathcal{X}_{a',\sigma}^0}^{2m}.
$$
Let $r_0\in[0,T^*)$ and consider the following Navier-Stokes equations with damping
$$
(S_{r_0})
\left\{\begin{array}{l}
\partial_{t} v- \Delta v+v.\nabla v +\alpha \sum_{k=1}^3v_k^{2m+1}e_k =-\nabla q \text { in } \mathbb{R}^{+}
\times \mathbb{R}^{3} \\
\operatorname{div} v=0 \text { in } \mathbb{R}^{+} \times \mathbb{R}^{3} \\
v(0, x)=u(r_0,x) \text { in } \mathbb{R}^{3}.
\end{array}\right.
$$
By the last results, $(S_{r_0})$ admits a unique maximal solution $v\in C([0,S^*),\mathcal{X}^0_{a,\sigma}(\mathbb{R}^3))$, with $0<S^*\leq \infty$. Moreover, if $S^*$ is finite, then
\begin{equation}\label{eqfn}
1
\leq
2mC_{m,\alpha}S^*
e^{2mC_{m,\alpha}S^*}
\|u(r_0)\|_{\mathcal{X}_{a',\sigma}^0}^{2m}.
\end{equation}
By existence and uniqueness of maximal solution to the system $(S_{r_0})$, we get $S^*=T^*-r_0$ and
$$v(t)=u(r_0+t),\;\forall t\in[0,T^*-r_0).$$
Moreover by using (\ref{eqfn}), we get
$$
1
\leq
2mC_{m,\alpha}(T^*-r_0)
e^{2mC_{m,\alpha}(T^*-r_0)}
\|u(r_0)\|_{\mathcal{X}_{a',\sigma}^0}^{2m},
$$
then
$$
1\leq 2mC_{m,\alpha}(T^*-r_0)
e^{2mC_{m,\alpha}T^*}
\|u(r_0)\|_{\mathcal{X}_{a',\sigma}^0}^{2m}.
$$
Therefore,
$$
\|u(r_0)\|_{\mathcal{X}_{\frac{a}{\sigma},\sigma}^0}
\geq
C\,(T^*-r_0)^{-\frac{1}{2m}},
$$
where
$$
C=
\left(
2mC_{m,\alpha}
e^{2mC_{m,\alpha}T^*}
\right)^{-\frac{1}{2m}}>0.
$$
Since $r_0\in[0,T^*)$ is arbitrary, we conclude that
$$
\|u(t)\|_{\mathcal{X}_{\frac{a}{\sigma},\sigma}^0}
\geq
C\,(T^*-t)^{-\frac{1}{2m}},
\qquad 0\leq t<T^*.
$$
This proves \eqref{theo4-eqn4}.

\subsection{Proof of (\ref{theo4-eqn5})}

Since
$$
\mathcal{X}_{a,\sigma}^0(\mathbb{R}^3)
\hookrightarrow
\mathcal{X}_{\frac{a}{\sigma},\sigma}^0(\mathbb{R}^3),
$$
we have
$$
u\in
C\left([0,T^*),
\mathcal{X}_{\frac{a}{\sigma},\sigma}^0(\mathbb{R}^3)\right).
$$
Moreover, by \eqref{theo4-eqn4},
$$
\|u(t)\|_{\mathcal{X}_{\frac{a}{\sigma},\sigma}^0}
\geq
C(T^*-t)^{-\frac1{2m}},
$$
and hence
$$
\lim_{t\nearrow T^*}
\|u(t)\|_{\mathcal{X}_{\frac{a}{\sigma},\sigma}^0}
=+\infty.
$$
Therefore, $T^*$ is also the maximal existence time of $u$ in
$\mathcal{X}_{\frac{a}{\sigma},\sigma}^0(\mathbb{R}^3)$.

We may therefore apply \eqref{theo4-eqn4} again, with $a$ replaced by
$a/\sigma$. This yields the same lower bound in
$\mathcal{X}_{\frac{a}{\sigma^2},\sigma}^0$ and shows that $T^*$ is also
the maximal existence time in this space. Repeating this argument
inductively, we obtain, for every $n\in\mathbb{N}$,
\begin{equation}\label{eqfn2}
1\leq
2mC_{m,\alpha}(T^*-t)
e^{2mC_{m,\alpha}T^*}
\|u(t)\|_{\mathcal{X}_{\frac{a}{\sigma^n},\sigma}^0}^{2m},
\qquad 0\leq t<T^*.
\end{equation}
Now, fix $t\in[0,T^*)$. Since $\sigma>1$, we have
$$
\frac{a}{\sigma^n}\longrightarrow0
\qquad\text{as }n\to\infty,
$$
and hence
$$
e^{\frac{a}{\sigma^n}|\xi|^{1/\sigma}}
\longrightarrow1.
$$
Moreover,
$$
e^{\frac{a}{\sigma^n}|\xi|^{1/\sigma}}
|\widehat{u}(t,\xi)|
\leq
e^{a|\xi|^{1/\sigma}}
|\widehat{u}(t,\xi)|,
$$
and the function on the right-hand side belongs to
$L^1(\mathbb{R}^3)$. Therefore, by the dominated convergence theorem,
$$
\lim_{n\to\infty}
\|u(t)\|_{\mathcal{X}_{\frac{a}{\sigma^n},\sigma}^0}
=
\|u(t)\|_{\mathcal{X}^0}.
$$
Passing to the limit as $n\to\infty$ in \eqref{eqfn2}, we obtain
$$
1\leq
2mC_{m,\alpha}(T^*-t)
e^{2mC_{m,\alpha}T^*}
\|u(t)\|_{\mathcal{X}^0}^{2m}.
$$
Consequently,
$$
\|u(t)\|_{\mathcal{X}^0}
\geq
\left(
2mC_{m,\alpha}e^{2mC_{m,\alpha}T^*}
\right)^{-\frac1{2m}}
(T^*-t)^{-\frac1{2m}}.
$$
In particular,
$$
\lim_{t\nearrow T^*}\|u(t)\|_{\mathcal{X}^0}=+\infty.
$$
Hence, $T^*$ is also the maximal existence time in
$\mathcal{X}^0(\mathbb{R}^3)$, and \eqref{theo4-eqn5} follows.

 \end{document}